\newtheorem{theorem}{Theorem}[section]
\newtheorem{lemma}[theorem]{Lemma}
\newtheorem{proposition}[theorem]{Proposition}
\newtheorem{corollary}[theorem]{Corollary}
\newtheorem{claim}{Claim}[section]
\newtheorem{definition}[claim]{Definition}
\newtheorem{prop}{Proposition}[section]
\newtheorem{remark}[prop]{Remark}
\makeatletter \@addtoreset{equation}{section} \makeatother
\def\RR{{\mathrm R}}
\def \2R{{\hat{\RR}}}
\def\Rc{{\mathrm {Rc}}}
\def\SS{{\mathrm S}}
\def\He{\mathrm {Hess}}
\def\id{\mathrm{Id}}
\def\lie{\mathcal{L}}
\def\tr{\text{tr}}
\begin{document}

\title{K\"{a}hler Soliton Surfaces Are Generically Toric}



\author{Hung Tran}
\address{Department of Mathematics and Statistics,
	Texas Tech University, Lubbock, TX 79409}
\email{hung.tran@ttu.edu}
\thanks{$^*$Research partially supported by an NSF grant [DMS-2104988] and the Vietnam Institute for Advanced Study in Mathematics.}




%

\begin{abstract} Let $(M, g, \omega, f, \lambda)$ be a K\"{a}hler gradient Ricci soliton in real dimension four. One first observes that it is an integrable Hamiltonian system in a classical sense. Indeed, all known complete examples are toric and the symmetry is intrinsically related to the potential function $f$ and the scalar curvature $\SS$. While another article \cite{tran23contact} addresses the case that these functions are functionally dependent, this one considers the independent case. The main result states that the soliton admits a toric action under a generic assumption. That is, one assumes that the system is non-degenerate and the potential function $f$ is proper. Then there is an effective, completely integrable Hamiltonian toric $\mathbb{T}^2$- action on $(M, \omega)$. 
\end{abstract}
\maketitle
\section{Introduction}
The theory of Ricci flows, initiated by R. Hamilton in a series of articles including \cite{H3, HPCO, Hsurvey}, has received tremendous attention as a dynamical mechanism to explore the geometry and topology of a Riemannian manifold. Crucial to any of its applications is the understanding of singularity models, among which gradient Ricci solitons (GRS) play a fundamental role. A GRS $(M, g, f, \lambda)$ is a smooth manifold $M$ with Riemannian metric $g$, potential function $f$, and a constant $\lambda$ such that, for $\Rc$ denoting the Ricci curvature,  
\begin{equation}
	\label{grs}
	\Rc+\text{Hess}{f}=\lambda g. 
\end{equation}
By a combination of diffeomorphism and scaling, one can show that a GRS is a self-similar solution to the Ricci flow. Also, a GRS is a generalization of an Einstein metric. Furthermore, corresponding to the sign of $\lambda$, a GRS is called shrinking $(\lambda>0 )$, steady ($\lambda=0$ ), or expanding $(\lambda<0 )$. \\

In a complex setup, it is natural to consider a K\"{a}hler gradient Ricci soliton (KGRS) $(M, g, f, J, \lambda)$. That is, for an almost complex structure $J$, $(M, J, g)$ is K\"{a}hler and $(M, g, f, \lambda)$ is a GRS. KGRS arise naturally in the context of running a Ricci flow preserving a K\"{a}hler structure. Thus, this topic has extensive literature; see, for examples, \cite{tian1, WZ04toric, caohd09, CZ12Kahler, MW15topo, CS2018classification, CF16conical, CD2020expanding, DZ2020rigidity} and references therein. In particular, there are tremendous recent developments leading to the classification of all shrinking KGRS surfaces \cite{CDSexpandshriking19, CCD22finite, BCCD22KahlerRicci, LW23}. Their line of argument contains specifics which are only applicable to the case $\lambda>0$. 


In this paper, we propose an approach applicable for all signs of $\lambda$ and, thus, provide an important step towards a full classification of all complete KGRS in real dimension four. The key idea is based on the perspective of Hamiltonian dynamics and the detection of torus action. To describe the first result, recall that the K\"{a}hler form \[\omega:= g(\cdot, J\cdot)\]
of a  K\"{a}hler metric is closed. Thus, $(M, \omega)$ is a symplectic manifold. When coupled with a smooth function, it becomes a Hamiltonian system. Informally speaking, a system is integrable if it admits sufficiently many conserved quantities, called integrals of motion. This notion originated from the study of classical mechanics. See \cite{BF_book04} for a recent mathematical treatment of the subject.  \\

We first observe that a KGRS is an integrable system in the classical sense; additionally, the integrals of motion come from geometric functions. 

\begin{theorem}
	\label{main1}
	Let $(M, g, J, \omega, f, \lambda)$ be a KGRS in real dimension four. Then $f$ is a Morse-Bott function. Suppose that the potential function $f$ and the scalar curvature $\SS$ are functionally independent then they are integrals of motion for  $(M, \omega)$. Furthermore, if $f$ is proper then each level set is connected.
\end{theorem}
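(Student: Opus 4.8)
The plan is to establish each of the three assertions in Theorem~\ref{main1} in turn, using the soliton identities derived from \eqref{grs} together with the K\"{a}hler structure. For the Morse-Bott property of $f$, the key input is the well-known gradient Ricci soliton identity $\nabla \SS = 2\,\Rc(\nabla f, \cdot)$ (obtained by taking a divergence of \eqref{grs} and using the contracted second Bianchi identity), which shows that the critical points of $f$ coincide with those of $\SS$ and that $\nabla f$ is an eigenvector-like object controlled by the curvature. At a critical point $p$ of $f$, I would compute the Hessian of $f$ directly from \eqref{grs}, writing $\He f = \lambda g - \Rc$; the nondegeneracy of $\He f$ in the directions transverse to the critical set then follows from analyzing the Ricci curvature in the eigenspaces. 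Because the metric is K\"{a}hler, $J$ commutes with $\Rc$, so the Hessian and the Ricci form are $J$-invariant and their kernels are $J$-invariant complex subspaces, which forces the critical submanifolds to be even-dimensional and the normal Hessian to be nondegenerate, giving Morse-Bott.

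For the integrals-of-motion claim, the essential point is to show that $f$ and $\SS$ Poisson-commute, i.e. $\{f, \SS\}_\omega = 0$, equivalently that the Hamiltonian vector fields $X_f$ and $X_{\SS}$ commute (or that $\omega(X_f, X_{\SS}) = 0$). I would exploit the fact that on a K\"{a}hler manifold the Hamiltonian vector field of a function $h$ is $X_h = J\nabla h$, so $\{f, \SS\} = \omega(X_f, X_{\SS}) = g(J\nabla f, \nabla \SS)$ up to sign. Substituting the soliton identity $\nabla \SS = 2\,\Rc(\nabla f,\cdot)^\sharp$ reduces the bracket to $2\,g(J\nabla f, \Rc(\nabla f)) = 2\,(J\nabla f)\lrcorner\,\Rc(\nabla f, \cdot)$; since $\Rc$ is $J$-invariant (Hermitian with respect to $g$), the bilinear form $(X,Y)\mapsto g(JX, \Rc(Y))$ is antisymmetric, so evaluating on $X = Y = \nabla f$ yields zero. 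This shows $\{f, \SS\} = 0$, and functional independence is precisely the hypothesis, so on the open dense set where $df \wedge d\SS \neq 0$ the two functions form a completely integrable system in the sense of having $n = 2$ independent Poisson-commuting integrals.

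For the connectedness of level sets under properness of $f$, the natural tool is a Morse-theoretic connectedness argument. Since $f$ is Morse-Bott (from the first part) and proper, the sublevel sets $\{f \le c\}$ are compact, and the topology changes across critical levels are governed by the indices of the critical submanifolds. I would aim to show that $f$ has no critical submanifold of index or coindex equal to one, so that attaching handles never disconnects a regular level set; this index restriction should follow again from the $J$-invariance of $\He f$, which forces the negative and positive normal eigenspaces to be even-dimensional complex subspaces, hence of even dimension. With all indices and coindices even, a standard Morse-Bott argument (or a Reeb-graph / Lyusternik--Schnirelmann type connectedness statement) implies that every regular level set, and then by a limiting argument every level set, is connected.

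The step I expect to be the main obstacle is the rigorous handling of the level-set connectedness, specifically ruling out index-one behavior uniformly and dealing with the critical levels themselves (where the level set may contain a critical submanifold) rather than only the regular levels. The algebraic facts---the soliton gradient identity, the $J$-invariance of $\Rc$, and the resulting antisymmetry that kills the Poisson bracket---are clean and should go through directly; the delicate part is converting the local Morse-Bott index data into a global connectedness conclusion, ensuring that properness genuinely forces connectivity (and not merely that regular fibers are unions of a controlled number of components). I would anticipate needing a careful induction on critical values, using that the only even coindices compatible with a four-manifold are $0, 2, 4$, and treating the extremal submanifolds (the minimum and maximum sets of $f$) as the base case of the induction.
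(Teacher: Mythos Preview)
Your treatment of the Poisson bracket $\{f,\SS\}=0$ is correct and matches the paper's computation exactly: both use $\nabla\SS=2\,\Rc(\nabla f)$ together with the $J$-invariance of $\Rc$ to see that $g(J\nabla f,\Rc(\nabla f))$ vanishes by antisymmetry. Your plan for connectedness of level sets is also essentially the paper's argument: the paper observes that the indices and coindices of $f$ are even (since $\He f$ commutes with $J$) and then invokes \cite[Prop.~4.5]{PRV15}, which is precisely the Morse--Bott connectedness statement you propose to prove by hand.

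The Morse--Bott step, however, has a genuine gap. Writing $\He f=\lambda g-\Rc$ and noting that this tensor is $J$-invariant tells you only that the kernel of $\He f$ at a critical point is a $J$-invariant (hence even-dimensional) subspace. It does \emph{not} tell you that the critical set is a smooth submanifold, nor that its tangent space coincides with $\ker\He f$; both of these are required for the Morse--Bott condition, and neither follows from ``analyzing the Ricci curvature in the eigenspaces.'' There is no a priori control on $\Rc$ at a critical point that would force nondegeneracy on the normal directions.

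The paper closes this gap by a different mechanism that you do not mention: since $\He f$ is $J$-invariant, the vector field $J\nabla f$ is Killing (Lemma~\ref{killing1}), and the critical set of $f$ is exactly the fixed-point set of the one-parameter isometry group it generates. Kobayashi's theorem then gives that each component is a totally geodesic submanifold of even codimension, and the relation between $\He f$ and the linearization of the isometry at a fixed point forces the Hessian to be nondegenerate on the normal bundle. This Killing-field/fixed-point argument is the missing ingredient in your outline.
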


The alternative case-- $f$ and $\SS$ are functionally dependent-- is already treated in \cite{tran23contact}. Thus, we obtain the following immediate consequence. 

\begin{corollary}
	\label{cor1}
		 A complete KGRS $(M, g, J, \omega, f, \lambda)$ in real dimension four is an integrable Hamiltonian system. Furthermore, if $\nabla f$ is parallel to $\nabla \SS$ ($\nabla f\parallel \nabla \SS$) everywhere then the structure is either 
		 \begin{itemize} 
		 	\item a product of a constant curvature surface with a $2D$ K\"{a}hler GRS, or
		 	\item  of cohomogeneity one, $f$ is invariant by its action, and each principal orbit is a connected deformed homogeneous Sasakian structure. 
		 \end{itemize}
		  Otherwise, the integrals of motion are given by $f$ and $\SS$. If $f$ is proper then the system is complete.  
\end{corollary}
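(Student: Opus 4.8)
The plan is to prove the first assertion by a dichotomy on the relation between $f$ and $\SS$, and then to read off the remaining clauses. On a connected $M$, the functions $f$ and $\SS$ are functionally dependent precisely when $df\wedge d\SS\equiv 0$, equivalently $\nabla f\parallel\nabla\SS$ everywhere; this is exactly the boundary between the two regimes invoked in the statement. First I would record this reduction and observe that completeness is what lets me apply the relevant classification in the degenerate regime.

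In the functionally independent case I would invoke Theorem \ref{main1} directly: it produces $f$ and $\SS$ as integrals of motion for $(M,\omega)$, which in particular means they Poisson-commute, $\{f,\SS\}=0$. Since independence says precisely that $df\wedge d\SS$ is nonzero on a dense open set, the map $F=(f,\SS)\colon M\to\mathbb{R}^2$ has components in involution whose differentials are generically linearly independent. On the four-dimensional symplectic manifold $(M,\omega)$ this is exactly the definition of a Liouville integrable Hamiltonian system with moment map $F$, which also settles the ``otherwise'' clause.

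In the functionally dependent case, $\nabla f\parallel\nabla\SS$ holds everywhere, so the hypothesis of \cite{tran23contact} is met and I would cite it for the stated structural dichotomy: either a product of a constant-curvature surface with a two-dimensional K\"ahler GRS, or a cohomogeneity-one structure with $f$ invariant and principal orbits modeled on a deformed homogeneous Sasakian structure. To confirm integrability here I would exhibit two commuting, generically independent integrals in each subcase: in the product case, one integral from each symplectic factor, using the rotational symmetry of a constant-curvature surface and of a two-dimensional soliton; in the cohomogeneity-one case, the moment of the Reeb circle action of the Sasakian structure together with the invariant function $f$. In both subcases the two functions are in involution and independent on a dense open set, so the system is again integrable.

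Finally, for completeness under properness I would show that $F=(f,\SS)$ is itself proper: if $K\subset\mathbb{R}^2$ is compact then $F^{-1}(K)\subset f^{-1}(\pi_1(K))$ is a closed subset of a compact set, hence compact. A proper moment map forces compact fibers and complete Hamiltonian flows, and Theorem \ref{main1} guarantees the level sets are connected; Liouville--Arnold then yields the complete integrable structure with compact regular (torus) fibers. The step I expect to be the main obstacle is the functionally dependent case, since there $f$ and $\SS$ no longer serve as independent integrals, so the real work is to extract two genuinely commuting, generically independent integrals from the symmetry produced by \cite{tran23contact}, and to verify that this construction persists uniformly across the non-compact steady and expanding solitons rather than only the shrinking ones.
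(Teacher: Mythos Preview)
Your overall structure matches the paper's, but you skip a step that the paper treats carefully: the dichotomy between ``$\nabla f\parallel\nabla\SS$ everywhere'' and ``$f,\SS$ functionally independent'' is not automatic for smooth functions. Functional independence, as defined in the paper, requires $df\wedge d\SS\neq 0$ almost everywhere, whereas the negation of ``parallel everywhere'' only gives you a single point of independence. The paper closes this gap by invoking that $g$ and $f$ on a gradient Ricci soliton are real analytic (citing \cite{bando87, Ilocal, DW11coho}); hence $|\nabla f|^2|\nabla\SS|^2-g(\nabla f,\nabla\SS)^2$ is a real-analytic function on the connected manifold $M$ and is therefore either identically zero or nonzero on an open dense set. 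Without this, you cannot legitimately invoke Theorem~\ref{main1} in the ``otherwise'' branch, because its hypothesis is genuine functional independence, not merely ``not parallel somewhere.''

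Two smaller remarks. Your completeness argument via properness of $\Phi=(f,\SS)$ is correct and amounts to the same thing as the paper's Lemma~\ref{completevf}, which observes more directly that $J\nabla f$ and $J\nabla\SS$ are both tangent to level sets of $f$ and hence confined to compact sets when $f$ is proper. In the dependent case the paper simply asserts that the structures produced by \cite{tran23contact} are isometrically toric and therefore completely integrable; your plan to build integrals by hand from the factor symmetries or the Sasakian Reeb action is more explicit than needed, since once a structure is toric the torus moment map already furnishes the commuting integrals.
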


\begin{remark}
	Generally speaking, a Morse-Bott function is a function such that its Hessian is non-degenerate on the normal bundle to each connected component of a singular level set. Morse theory and its generalization on Morse-Bott functions, initiated by R. Bott \cite{bott54}, are fundamental in the study of integrable Hamiltonian systems. 
\end{remark}
\begin{remark}
	For a KGRS, the fact that $f$ is Morse-Bott is well-known to experts \cite{CST09note,CDSexpandshriking19}. The connectedness of its level set is an immediate consequence due to \cite{PRV15} and might be of independent interest.
\end{remark}

\begin{remark}
	See Section \ref{preliminary} for precise definitions of these terminologies. 
\end{remark}

For the rest of this article, our focus is on the case that $f$ and $\SS$ are functionally independent. The function 
\[\Phi: M \mapsto \mathbb{R}^2,~~ x\mapsto (f(x),\SS(x)) \]
is called the moment map. The general guideline is that significant data about the topology and geometry of the manifold is encoded in the image of $\Phi$.  For example, any compact component of the pre-image of a regular value, $\Phi^{-1}(c)$, is a Lagrangian $2$-diemsnional torus (Langrangian means the  restriction of $\omega$ to the tangent space is vanishing). Furthermore, for a completely integrable system with a proper moment map, the well-known Liouville-Arnold theorem guarantees a local torus action. That is, at a regular point, there is a neighborhood such that one can construct a torus $\mathbb{T}^2$-action preserving the system.\\

The existence of a global torus action is delicate and highly non-trivial. This is equivalent to extending the action across singular points and J. Duistermaat \cite{Duis80} first observed there is some monodromy. The monodromy is described in terms of a covering group such that its non-triviality is a topological obstruction to go from local to global. To understand such behavior, it is inevitable to study the set of singular points where the differential of $\Phi$ is not of the maximal rank. \\

Consequently, it is considered generic to assume that the moment map is proper and each singular point is non-degenerate. For a differentiable function, a generic singularity is a Morse one in the sense that the Hessian of the function is non-degenerate at that point. In considering an integrable Hamiltonian system with a tuple of functions, it is possible to extend that generic notion. Roughly speaking, for a system with two degrees of freedom, non-degeneracy implies that the restriction of one function on a regular level set of another is a Morse-Bott function. A precise definition in terms of Cartan sub-algebras is given in Section \ref{preliminary}. In case of a KGRS, the assumption can be equally formulated int terms of the Riemannian curvature tensor at each singular point. \\

Our second theorem asserts that there is a global torus action under such a generic assumption.

\begin{theorem}
	\label{main2}
	Let $(M, g, J, f, \lambda)$ be a complete KGRS in real dimension four. Supposed that the integrable Hamiltonian system $(M, \omega, f, \SS)$ is non-degenerate and $f$ is proper. Then $f$ is Morse and the moment map is of toric type. That is, there is an effective, completely integrable Hamitonian toric $\mathbb{T}^2$-action on M whose momentum map is of the form $\Upsilon=\ell \circ \Phi$, where $\ell$ is a diffeomorphism from $\Phi(M)$ into its image.  
\end{theorem}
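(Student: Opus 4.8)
The plan is to prove that the system $(M,\omega,f,\SS)$ is \emph{of toric type} by classifying all of its singularities as elliptic and then appealing to the structure theory for non-degenerate integrable systems \cite{BF_book04}. First I would record that the full momentum map $\Phi=(f,\SS)$ is proper: any compact $K\subset\mathbb{R}^2$ lies in a box $[a,b]\times[c,d]$, so $\Phi^{-1}(K)\subset f^{-1}([a,b])$ is a closed subset of a compact set, hence compact. Next I would upgrade the Morse--Bott conclusion of Theorem \ref{main1} to the Morse property. At a rank-zero point $p$ one has $\nabla f=0$, and since the soliton identity gives $\nabla\SS=2\Rc(\nabla f)$, also $\nabla\SS=0$; moreover $\He f=\lambda g-\Rc$ there. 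Because $(M,g,J)$ is K\"ahler, $\Rc$ commutes with $J$, so $\He f$ is $J$-invariant and its eigenvalues occur in equal pairs $\mu_1,\mu_1,\mu_2,\mu_2$. The non-degeneracy hypothesis---equivalently the curvature condition that $\lambda$ is not an eigenvalue of $\Rc$ at $p$---forces $\mu_1,\mu_2\neq0$, so $\He f$ is non-degenerate and $f$ is Morse, with automatically even index.

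The heart of the matter is to show that \emph{every} singularity is of elliptic type, which is precisely what kills the Duistermaat monodromy \cite{Duis80} and permits the local Liouville--Arnold tori to be glued into a global action. The key mechanism is again K\"ahlerity: the linearization of the Hamiltonian vector field $X_f$ at $p$ is $v\mapsto J\,\He f(v)$, and since $\He f$ is symmetric and $J$-invariant its spectrum is purely imaginary, namely $\{\pm i\mu_1,\pm i\mu_2\}$ with all four values nonzero by the previous step. An element of a Cartan subalgebra of $\mathfrak{sp}(4,\mathbb{R})$ with four nonzero purely imaginary eigenvalues can only sit in the compact Cartan subalgebra, so by non-degeneracy the singularity is of elliptic--elliptic type; this simultaneously excludes the hyperbolic--hyperbolic, elliptic--hyperbolic, and focus--focus cases. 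At a rank-one point I would perform K\"ahler (symplectic) reduction by the locally free flow of the integral with non-vanishing differential: the reduced space is again K\"ahler, the transverse Hessian of $f$ remains $J$-invariant---hence a nonzero multiple of the identity in the two-dimensional quotient---so the singularity is transversally elliptic. Thus all singularities are non-degenerate and elliptic.

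With properness, only elliptic singularities, and connected fibers (supplied by Theorem \ref{main1} together with the standard connectedness of fibers for proper non-degenerate systems) in hand, I would conclude by invoking the toric-type characterization. Eliasson's linearization theorem yields a Hamiltonian $\mathbb{T}^2$ model in a saturated neighborhood of each (elliptic) singular fiber; the absence of focus--focus points means the period lattice carries no monodromy, so these local torus actions patch into a single effective, completely integrable Hamiltonian $\mathbb{T}^2$-action on $(M,\omega)$. Since its momentum map $\Upsilon$ and $\Phi$ define the same Lagrangian foliation, they differ by a leaf-preserving transformation that descends to a diffeomorphism $\ell$ of $\Phi(M)$ onto its image with $\Upsilon=\ell\circ\Phi$; on regular values $\ell$ is the integral-affine chart and it extends continuously across the elliptic singular values.

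I expect the principal obstacle to be the final global assembly rather than the local classification. Ruling out focus--focus and hyperbolic behavior through the $J$-invariance of $\He f$ is the conceptual crux and is relatively clean once $f$ is known to be Morse. The delicate part is showing that the elliptic singular strata---vertices (rank zero) and edges (rank one)---fit together so that $\Phi(M)$ inherits a consistent integral-affine, Delzant-type structure and the period lattice is globally trivial; this is where the connectedness of the fibers from Theorem \ref{main1} and careful bookkeeping of the topology of the singular fibers are essential.
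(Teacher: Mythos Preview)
Your rank-zero analysis is essentially the paper's Lemma~\ref{chardege0}: the linearization $A_{J\nabla f}=-J\circ H_f$ has purely imaginary spectrum $\{\pm i\mu_1,\pm i\mu_2\}$, and non-degeneracy forces these to be distinct and nonzero, pinning down the elliptic--elliptic type. That part is fine.

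The rank-one argument, however, has a genuine gap. At a rank-one point $\nabla f\neq 0$ while $\nabla\SS\parallel\nabla f$ (Lemma~\ref{lm1}), so $f$ is the \emph{regular} integral; after reducing by its Hamiltonian flow the ``transverse Hessian of $f$'' is not the relevant object---$f$ is constant on the reduced leaf. What must be analyzed is the transverse Hessian of $\SS$. Even with that charitable reading, the claim that this Hessian is $J$-invariant (hence a multiple of the identity on the two-dimensional quotient) is false in general. Proposition~\ref{hessS} gives
\[
\He\SS(X,Y)=2\lambda\,\He f(X,Y)-2\,\He^2 f(X,Y)-2\,\RR(X,J\nabla f,Y,J\nabla f),
\]
and while the first two terms are $J$-invariant, the K\"ahler curvature identities only yield $\RR(JX,J\nabla f,JY,J\nabla f)=\RR(X,\nabla f,Y,\nabla f)$, which need not equal $\RR(X,J\nabla f,Y,J\nabla f)$. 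Concretely, in the normal frame of Lemma~\ref{hessSonK} the two transverse eigenvalues of $\He\SS$ involve the sectional curvatures $\text{sect}(E_1,\gamma')$ and $\text{sect}(E_2,\gamma')$, and nothing forces these to agree or even to have the same sign. So a purely local, algebraic argument cannot rule out a hyperbolic bloc at rank one.

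The paper circumvents this with a \emph{global} contradiction (Theorem~\ref{S1elliptic}). A hyperbolic rank-one point lies on a totally geodesic surface $N\subset\mathcal S_1$ (Lemmas~\ref{geodesic}--\ref{totalgeo}); the integral curve of $\nabla f$ through it, extended using properness and the fact that $f$ is bounded above or below, must limit on some $m\in\mathcal S_0$. By the rank-zero step $m$ is elliptic--elliptic and hence already attached to two elliptic branches (Lemma~\ref{branchhyper}); the incoming hyperbolic branch would be a third. But each branch determines an eigenplane of $\He f$ at $m$ (Lemma~\ref{fundlem}), and $\He f$ has only two such eigenplanes, giving Corollary~\ref{atmost2} and the contradiction. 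Note this argument genuinely uses that $f$ is bounded on one side---supplied by the soliton estimates for $\lambda\neq 0$ and connectedness at infinity for $\lambda=0$---which your local approach never invokes.
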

\begin{remark}
	Indeed, all examples of complete KGRS in real dimension four are toric and the corresponding Halmitonian systems are non-degenerate. Our result is certainly consistent with the classification of \cite{CDSexpandshriking19, CCD22finite, BCCD22KahlerRicci, LW23}.  
\end{remark}
\begin{remark}
	A Morse function is a Morse-Bott function such that each connected component of a singular set is an isolated point. 
\end{remark}

\begin{remark}
	It is natural to ask if the torus action preserves the Riemannian metric $g$. It will be addressed elsewhere. 
\end{remark}

\begin{remark}
	Based on Theorem \ref{main2}, it is natural to expect that there is a $\mathbb{T}^2$-action preserving both the symplectic form $\omega$ and the metric $g$. Indeed, for a compact K\"{a}hler-Einstein maniford with non-zero scalar curvature, Y. Matsushima showed that the Lie algebra of Killing vector fields is a real form of the Lie algebra of real holomorphic vector fields \cite{Mat57}. Very recently, an analogous theorem is obtained by R. Conlon-A. Deruelle-S. Sun for a shrinking KGRS with bounded Ricci curvature \cite{CDSexpandshriking19}. Consequently, C. Cifarelli confirmed that a holomorphic torus action leads to an isometric one for that particular case \cite{cifarelli22}. The more general setup is non-trivial and will be addressed elsewhere.  
\end{remark}

The organization of the paper is as follows. Section \ref{preliminary} will recall preliminary results with an emphasis on describing notions related to Hamiltonian dynamics. The proofs of Theorem \ref{main1} and Corollary \ref{cor1} are given in Section \ref{integrable} by relatively simple algebraic observations. The rest of the paper is devoted to prove Theorem \ref{main2}. Section \ref{aboutHessS} derives a general formula for the Hessian of the scalar curvature in relation to one of the potential function. Then, in Section \ref{sectiongeo}, one observes that singular points of the system form geodesic curves and totally geodesic surfaces. The proof of Theorem \ref{main2} is given in Section \ref{singularity} based on the following steps:
\begin{itemize}
	\item General strategy: The singularity points are classified into various types (depending on combinations of blocs of either elliptic, hyperbolic, or focus-focus type) and a toric system generally corresponds to only elliptic singularities. 
	\item Step 1: Show that the rank 0 singularities are elliptic via direct calculation of $\He f$ and $\He \SS$ at these points (Theorem \ref{S0}). In particular, there are no focus-focus, hyperbolic-hyperbolic, or elliptic-hyperbolic singularities. 
	\item Step 2: Show that if a rank 0 singularity point $m$ is connected to a branch of rank 1 singularities then it determines an eigenspace of $\He f$ at $m$. As $\He f$ commutes with $J$, $\He f$ has at most two distinct eigenvalues and, thus, $m$ can connect to at most two branches of singularities (Corollary \ref{atmost2}).
	\item Step 3: Show that the rank 1 singularities are elliptic via a contradiction argument and a global consideration (Theorem \ref{S1elliptic}). That is, if a singular point has a hyperbolic bloc, then there is an embedded line segment of critical values in the interior of the image of the moment map. One extends this segment in both directions and the compactness deduces that it must stop at a rank 0 singularity $m$. By Step 1, $m$ is elliptic and connects to two branches of elliptic singularities. In addition to the hyperbolic branch, $m$ is connected to three branches, a contradiction to Step 2.   
	\item Step 4: Since all singularities are elliptic, we apply the theory on almost-toric integrable system developed by \cite{VNS07, PRV15, PV09}. 
\end{itemize}
 
\subsection{Acknowledgment}  The author benefits greatly from discussion with Profs. Quo-Shin Chi, Xiaodong Cao, Reyer Sjamaar, and Rui Loja Fernandes. We would like to thank an anonymous referee for constructive comments.
.  
\section{Preliminaries}
\label{preliminary}

In this section, we fix our notation and convention which will be used throughout the article and recall preliminary results. A major emphasis will be on describing notions related to Hamiltonian dynamics.  

\subsection{Notation and Convention}
Let $(M, g)$ be an orientable connected Riemannian manifold of an even dimension. Let $\nabla$ denote the unique Levi-Civita connection induced by $g$. For vector fields $X$ and $Y$, recall that $\nabla_{X} Y$ is the directional derivative of $Y$ along integral curves of $X$. An almost complex structure $J$ is defined to be a smooth section of the bundle of endormorphisms $\text{End}(TM)$ such that
\[J^2=-\id. \]
$J$ is said to be integrable if it is indeed induced from an atlas of complex charts with holomorphic transition functions. $(M, g, J)$ is called an almost Hermitian manifold and $g$ a Hermitian metric if
\[g(JX, JY)=g(X, Y).\] 
The K\"{a}hler form is defined via the almost complex structure:
\begin{align*}
	\omega(X, Y) &:= g(X, JY)
\end{align*}
A triple $(M, g, J)$ is called almost K\"{a}hler if $d\omega=0$. Furthermore, in case $J$ is integrable, it is called K\"{a}hler. From a Riemannian geometry perspective, the following is well-known.
\begin{proposition}\cite[Proposition 3.1.9]{BGbookSasakian08}
	\label{charKahler}
	Let $(M, g, J)$ be an almost Hermitian real manifold. The followings are equivalent:
	\begin{enumerate} [label=(\roman*)]
		\item $\nabla J=0$,
		\item $\nabla \omega_g=0$,
		\item $(M, g, J)$ is K\"{a}hler.  
	\end{enumerate}
\end{proposition}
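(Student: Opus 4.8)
The plan is to prove the three statements in the cycle (i) $\Leftrightarrow$ (ii), then (i) $\Rightarrow$ (iii), and finally (iii) $\Rightarrow$ (i). Throughout I would lean on the two defining features of the Levi-Civita connection, namely that it is metric ($\nabla g = 0$) and torsion-free, together with the non-degeneracy of $g$ and the Hermitian compatibility $g(J\cdot, J\cdot) = g(\cdot, \cdot)$.

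For (i) $\Leftrightarrow$ (ii), I would simply differentiate the defining relation $\omega(X,Y) = g(X, JY)$. Expanding $(\nabla_Z\omega)(X,Y)$ by the Leibniz rule and cancelling the metric-compatibility terms yields, for all vector fields $X, Y, Z$,
\[(\nabla_Z\omega)(X,Y) = g\big(X,(\nabla_Z J)Y\big).\]
Since $g$ is non-degenerate, the left-hand side vanishes identically precisely when $(\nabla_Z J)Y = 0$ for all $Y, Z$, which is the equivalence of (i) and (ii).

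For (i) $\Rightarrow$ (iii): assuming $\nabla J = 0$, the identity above gives $\nabla\omega = 0$, and since $d\omega$ is the antisymmetrization of $\nabla\omega$, the K\"ahler form is closed, so the structure is almost K\"ahler. For integrability I would invoke the expression of the Nijenhuis tensor in terms of a torsion-free connection preserving $J$,
\[N_J(X,Y) = (\nabla_{JX}J)Y-(\nabla_{JY}J)X-J(\nabla_X J)Y+J(\nabla_Y J)X,\]
which vanishes identically once $\nabla J = 0$. By the Newlander--Nirenberg theorem, $N_J = 0$ means $J$ is induced by an atlas of holomorphic charts, so $(M,g,J)$ is K\"ahler.

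The main obstacle is the converse (iii) $\Rightarrow$ (i), where one must recover parallelism from the two integrability obstructions. Here I would establish a master identity expressing $\nabla J$ solely through $d\omega$ and $N_J$. Using only torsion-freeness and $\nabla g = 0$, one writes $d\omega$ and $N_J$ in covariant-derivative form (the torsion-free property lets Lie brackets be replaced by $\nabla$) and then solves the resulting linear relations by cyclically permuting $X, Y, Z$ and applying $g(J\cdot, J\cdot) = g(\cdot, \cdot)$, arriving at an identity of the schematic form
\[2\,g\big((\nabla_X J)Y,Z\big) = d\omega(X,JY,JZ) - d\omega(X,Y,Z) + g\big(N_J(Y,Z),JX\big).\]
Granting this, the K\"ahler hypothesis forces $d\omega = 0$ and (via Newlander--Nirenberg) $N_J = 0$, so the right-hand side vanishes for every $Z$; non-degeneracy of $g$ then gives $\nabla J = 0$, closing the cycle. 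The only genuinely delicate part is the index bookkeeping in deriving the master identity; the remaining implications follow at once from $\nabla g = 0$ and the non-degeneracy of $g$.
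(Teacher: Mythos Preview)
The paper does not actually prove this proposition; it is quoted from \cite{BGbookSasakian08} and stated without proof in the preliminaries. Your argument is the standard one (essentially the proof found in Kobayashi--Nomizu, Vol.~II): the equivalence (i)~$\Leftrightarrow$~(ii) via $(\nabla_Z\omega)(X,Y)=g(X,(\nabla_ZJ)Y)$ is immediate, and the remaining implications hinge on the classical identity expressing $g((\nabla_XJ)Y,Z)$ in terms of $d\omega$ and $N_J$, together with Newlander--Nirenberg. This is correct, and there is nothing in the paper to compare it against.
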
 

Next we recall various notions of curvature. First, the Lie bracket of vector fields is defined by, for any smooth function $h$,
\[(\lie_X Y) h= [X, Y] h= XY h- YX h.\]
Then, the Riemannian curvature, as an $(1, 1)$ tensor, is given by
\begin{align*}
	\RR(X, Y):&=[\nabla_X, \nabla_Y]-\nabla_{[X, Y]}.
\end{align*}
Our convention of the $(4,0)$ Riemannian curvature tensor is in agreement with \cite{KNvolumeI96, pe06book}: 
\[\RR(X, Y, W, Z):= g(\RR(X, Y)W, Z).\]
Consequently, the sectional curvature of the plane spanned by vectors $X$ and $Y$ is obtained by:
\[\text{sect}(X, Y):=\frac{\RR(X, Y, Y, X)}{g(X, X)g(Y, Y)-g(X, Y)^2.}\]
Then, the Ricci curvature is defined as a trace:
\[\Rc(X, Y):= \text{tr}(Z\mapsto \RR(Z, X)Y).\]

Moreover, for a $(k, 0)$ tensor, we recall the formula for the covariant derivative:
\begin{align*}
	(\nabla T)(X, Y_1,... Y_k) &:= (\nabla_X T)(Y_1,... Y_k),\\
	(\nabla_X T)( Y_1,...Y_k) &:=\nabla_X (T(Y_1,...Y_k))-\sum_{i=1}^k T(Y_1,...\nabla_X Y_i,...Y_k).
\end{align*}
Then $\delta$ denotes the divergence operator or the co-differential. That is, for an orthonormal basis,
\[ \delta T (\cdot)= \sum_{i} g((\nabla_{e_i} T)(\cdot), e_i).\]
$\imath$ denotes the interior product
\[\imath_X T(\cdot):= T(X, \cdot).\]

Let $h$ be a smooth function $h: M\mapsto \mathbb{R}$, its gradient is also the dual, via the Riemannian metric $g$, of the $1$-form $df$. That is, for any vector field $X$,
\[\nabla_X h= g(\nabla h, X)= dh(X).\]
The Hessian is then given by
\begin{align*}
	 \He h(X, Y) &:= g(\nabla_X\nabla h, Y)=g(\nabla_Y \nabla h, X)=\frac{1}{2}(\lie_{\nabla h} g) (X, Y),\\
	 &=X (Y f)-df(\nabla_X Y).
	 \end{align*}
The Laplacian is then just the trace:
\[\Delta h:= \tr (\He f).\]

Finally, we recall, For any vector field $X$, the derivation $A_X= \lie_X-\nabla_X$ is induced by a tensor field of type $(1, 1)$. If $X$ is a Killing vector field, by \cite{KNvolumeI96}, 
\begin{align*}
	\nabla_Y (A_X)&= \RR(X, Y).
\end{align*}

\subsection{ K\"{a}hler Gradient Ricci Solitons}
A Riemannian manifold $(M^n, g)$ with a function $f: M\mapsto \mathbb{R}$ is called a gradient Ricci soliton (GRS) if
\begin{equation*}
	\text{Rc}+\frac{1}{2}\lie_{\nabla f} g =\Rc+ \He f= \lambda g.
\end{equation*}
Then, the first and second Bianchi's identities of a Riemannian curvature tensor lead to several consequences: 
\begin{align}
	\label{deltaS}
	\SS + \triangle f  &= n\lambda;\\
	\label{rcandf}
	\Rc(\nabla{f}) &=\frac{1}{2}\nabla{\SS}=\delta \Rc;\\
	\label{nablafandS}
	\SS+|\nabla f|^2-2\lambda f &= \text{constant};\\
	\label{lapS}
	\triangle\SS+2|\text{Rc}|^2 &= \left\langle{\nabla f,\nabla \SS}\right\rangle+2\lambda \SS.
\end{align}
For a proof, see \cite{chowluni}. Also, (\ref{nablafandS}) is generally considered a conservation law. We will then collect a few facts that will be used later:

\begin{itemize}
	\item If the soliton is geodesically complete, then the vector field $\nabla f$ is complete \cite{zhang09completeness}.
	\item If $\lambda \geq 0$, then $\SS\geq 0$ by the maximum principle and equation $(\ref{lapS})$.  Moreover, such a complete GRS has positive scalar curvature unless it is isometric to the flat Euclidean space  \cite{zhang09completeness, chenbl09}.
	\item If $\lambda<0$, then $\SS$ is also bounded below \cite{zhang09completeness}. As a consequence, equation (\ref{nablafandS}) implies that $f$ is bounded above. 
	\item The Riemannian metric $g$ and the potential function $f$ of a gradient Ricci soliton are real analytic \cite{bando87, Ilocal}. See also \cite[Lemma 3.2]{DW11coho}. 
	
\end{itemize}





In the presence of a complex structure, it is natural to define a K\"{a}hler gradient Ricci soliton (KGRS).
\begin{definition}
	$(M, g, J, f)$ is a K\"{a}hler GRS if $(M, g, f)$ is a GRS and $(M, g, J)$ is a K\"{a}hler manifold.
\end{definition}

It is crucial to observe that, on a K\"{a}hler manifold $(M, g, J)$, $\Rc$ is $J$-invariant. Thus, for a K\"{a}hler GRS, so is $\He f$. The following is well-known.  

\begin{lemma}
	\label{killing1}
	Let $(M, g, J)$ be a K\"{a}hler manifold and $f:M \mapsto \mathbb{R}$ such that $\He f$ is $J$-invariant. Then, we have the followings:
	\begin{enumerate} [label=(\roman*)]
		\item $J(\nabla f)$ is a Killing vector field.
		\item $\nabla f$ is an infinitesimal automorphism of $J$. 
	\end{enumerate}
\end{lemma}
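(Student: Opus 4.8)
The plan is to reduce both statements to a single algebraic fact: the $J$-invariance of $\He f$ is equivalent to the commutation of $J$ with the Hessian endomorphism. First I would introduce the self-adjoint $(1,1)$-tensor $H$ defined by $H(X)=\nabla_X \nabla f$, so that $\He f(X,Y)=g(H(X),Y)$; this is symmetric because the Hessian is symmetric. The hypothesis reads $\He f(JX,JY)=\He f(X,Y)$, i.e. $g(H(JX),JY)=g(H(X),Y)$. Using that $J$ is $g$-skew-adjoint, $g(a,Jb)=-g(Ja,b)$ (a consequence of $g(J\cdot,J\cdot)=g(\cdot,\cdot)$ and $J^2=-\id$), the left-hand side rewrites as $-g(JH(JX),Y)$; since this holds for all $Y$ it gives $JH(JX)=-H(X)$, and applying $J$ once more yields $H(JX)=JH(X)$. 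Thus $H$ commutes with $J$.

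For (i), I would invoke the K\"{a}hler condition $\nabla J=0$ (Proposition \ref{charKahler}) to differentiate through $J$: $\nabla_X(J\nabla f)=J\nabla_X\nabla f=JH(X)$. Hence $J\nabla f$ is Killing precisely when the bilinear form $(X,Y)\mapsto g(JH(X),Y)$ is skew-symmetric. Because $H$ is $g$-self-adjoint and $J$ is $g$-skew-adjoint, the composition $JH=HJ$ satisfies $(JH)^*=H^*J^*=-HJ=-JH$, so it is $g$-skew-adjoint; therefore $g(JH(X),Y)=-g(JH(Y),X)$, which is the required antisymmetry, i.e. $\tfrac12(\lie_{J\nabla f}g)=0$.

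For (ii), I would use the identity valid on any K\"{a}hler manifold by $\nabla J=0$, namely $(\lie_{\nabla f}J)(Y)=[\nabla f,JY]-J[\nabla f,Y]=J\nabla_Y\nabla f-\nabla_{JY}\nabla f=JH(Y)-H(JY)$, where the torsion-free and parallel-$J$ properties are used to pass from brackets to covariant derivatives. The commutation $JH=HJ$ established above forces this to vanish for every $Y$, so $\lie_{\nabla f}J=0$; that is, $\nabla f$ is an infinitesimal automorphism of $J$.

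The computation carries no serious obstacle; the only delicate point is bookkeeping the signs in the algebraic identities (the skew-adjointness of $J$ and the sign in $J^2=-\id$) when deriving $JH=HJ$, and correctly reducing both the Killing condition and the holomorphy condition to the single statement that $JH$ is $g$-skew-adjoint. Once the commutation relation is in hand, (i) and (ii) each follow in one line.
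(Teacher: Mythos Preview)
Your argument is correct. The reduction to the single algebraic fact $HJ=JH$ is exactly the right move, and both the sign bookkeeping (deriving $H(JX)=JH(X)$ from $J$-invariance of $\He f$) and the two one-line deductions for (i) and (ii) are clean. One small remark on presentation: in your computation of $(\lie_{\nabla f}J)(Y)$ you jump directly from $[\nabla f,JY]-J[\nabla f,Y]$ to $JH(Y)-H(JY)$; it is worth displaying the intermediate cancellation of the $J\nabla_{\nabla f}Y$ terms, since that is where both the torsion-freeness of $\nabla$ and $\nabla J=0$ are used simultaneously.

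As for comparison with the paper: there is nothing to compare. The paper records this lemma as ``well-known'' and gives no proof at all, so your write-up in fact fills a gap. The approach you take is the standard one and is consistent with how the paper later uses the endomorphism $H_f$ (see Lemma~\ref{AvsHess}, where the identity $A_{J\nabla f}=-J\circ H_f$ is established); your notation $H$ coincides with the paper's $H_f$, and your commutation relation $JH=HJ$ is precisely what underlies the paper's repeated claim that ``$\He f$ commutes with $J$''.
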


\subsection{Morse and Morse-Bott Functions}
The theory of Morse and Morse-Bott functions will be fundamental to our approach. Our references are \cite{Nico07book, PRV15}.  Throughout this section, $h$ is a smooth function $h: M\mapsto \mathbb{R}$. 

\begin{definition}
	A critical point of a smooth function $h: M\mapsto \mathbb{R}$ is non-degenerate if its Hessian at that point is non-degenerate. A smooth function is called Morse if all its critical points are non-degenerate.  
\end{definition}

\begin{remark}
	This definition is independent of a Riemannian metric as, at critical points, the Hessian coincides with $d^2 h$:
	\[\He h(X, Y)= d^2 h(X, Y)= Y(X h).\]
\end{remark}
\begin{definition}
	Let $m\in M$ be a critical point of a smooth function $h: M\mapsto \mathbb{R}$. Its index is defined to be the index of $\He h$ at that point. That is, the index counts the number of negative eigenvalues of the symmetric matrix $\He h$.
\end{definition}
R. Bott did pioneering work \cite{bott54} to extend the notion leading to a broader range of applications. In modern language, we recall the following. 

\begin{definition} A smooth submanifold $N\subset M$ is called a non-degenerate critical sub-manifold of $h$ if the following holds:
	\begin{itemize}
		\item $N$ is connected;
		\item $N$ is a subset of the set of critical points of $h$;
		\item $\forall p\in N$ we have $T_p N= \text{Ker} (\He h_{p})$. That is, $\He h_{p}(X, Y)=0 ~~\forall Y\in T_p M \iff X\in T_pN$.
	\end{itemize} 
	The function $h$ is called a Morse-Bott function if its critical set consists of non-degenerate critical submanifolds.
\end{definition}

Next we recall the fundamentals of Hamiltonian dynamics. Our references are \cite{Cannas01_book, BF_book04, VNS07, PRV15, PV09}. Let's start with some algebra. 
\subsection{Symplectic Algebra}
A $2n$-dimensional vector space $V$ is called symplectic if it is endowed with a non-degenerate bilinear skew-symmetric $2$-form $\omega$. $\omega$ is called a symplectic form or symplectic structure of $V$. If we fix a Darboux coordinate system $\{e_1,... e_{2n}\}$, then $\Omega$ denotes the matrix associated with $\omega$; that is, $\Omega_{ij}=\omega(e_i, e_j)$. $\Omega$ is said to be in a canonical form if 
 \[\Omega=\begin{bmatrix}
 	0 & \id \\
 	\id & 0
 \end{bmatrix}\] 
 
 Given a symplectic form $\omega$ on a $2n$-dimensional vector space $V$, a linear transformation $A: V\mapsto V$ is called symplectic if it preserves $\omega$:
\[ \omega(X, Y)= \omega(AX, AY).\]
The set of all symplectic transformations forms a Lie group, denoted by $\text{Sp}(2n, \mathbb{R})$. Here are a few facts about this group.
\begin{itemize}
	\item Symplectic transformations are uni-modular; that is, \[\text{det}(A)=1~~ \forall A\in \text{Sp}(2n, \mathbb{R}).\]
	\item The characteristic polynomial $P(\lambda)=\text{det}(A-\lambda \id)$ satisfies:
	\[P(\lambda)=\lambda^{2n}P(\frac{1}{\lambda}).\]
	\item $\text{Sp}(2n, \mathbb{R})$ is a non-compact Lie group of dimension $n(2n+1)$. 
	\item Topologically, $\text{Sp}(2n, \mathbb{R})$ is diffeomorphic to the direct product of the unitary group $U(n)$ (of dimension $n^2$) and the Euclidean vector space $\mathbb{R}^{n(n+1)}$. 
	\item $\text{Sp}(2n, \mathbb{R})$ is path connected but not simply connected; its fundamental group is $\mathbb{Z}$. 
\end{itemize}  

The corresponding Lie algebra $\mathfrak{sp}(2n, \mathbb{R})$ consists of matrices $A$ satisfying the relation 
\[A^T \omega+ \omega A=0.\]
If the basis is canonical, 
\[\Omega= \begin{bmatrix}
	0 & \id \\
	-\id & 0
	\end{bmatrix},
	\]
	then 
\[A= \begin{bmatrix}
	A_1 & A_2 \\
	A_3 & -A_1^T
\end{bmatrix},
\]
for $A_1$ is arbitrary while $A_2$ and $A_3$ are symmetric. Here are a few facts:
\begin{itemize}
	\item $\mathfrak{sp}(2n, \mathbb{R})$ is isomorphic to the algebra of homogeneous quadratic polynomials with Poisson brackets. 
	\item There is a Jordan decomposition: any homogeneous quadratic polynomial is a summation of a semisimple part (conjugate to a diagonalized matrix) and  a nilpotent part:
	 \[H_2=H_{ss}+H_{nil}.\]  
	\end{itemize}

\subsection{Integrable Systems}
A symplectic manifold is a differentiable manifold $M$ equipped with a smooth closed $2$-form $\omega$ such that, for each $m \in M$, $(T_mM, \omega_{\mid_m})$ is a symplectic vector space. A vector field $X$ on $M$ is then called symplectic if it generates a flow preserving $\omega$. Equivalently, $\imath_X\omega$ is closed.

\begin{definition} A vector field $X$ is called Halmiltonian if $\imath_X\omega$ is exact. A primitive $H$, $\imath_X\omega= dH$, is called a Halmiltonian function of $X$. 
\end{definition} 
	
Alternatively, one starts with a symplectic manifold $(M, \omega)$ and a function $H$. $\omega$ induces an isomorphism between vector fields and one-forms. Thus, by non-dengenacy of the symplectic form $\omega$, there is a unique vector field $X_H$ on $M$ such that
\[dH= \imath_{X_H}\omega.\] 
It is immediate to check that the flow generated by $X_H$  preserves the symplectic structure. In the presence of a Riemannian metric and an almost complex structure such that $\omega=g(\cdot, J\cdot)$, then we have 
\[ X_H= J\nabla H.\]

\begin{definition}
	The triple $(M, \omega, H)$ is called a Hamiltonian system.
\end{definition}

Next, we recall the definition of a Poisson bracket.
\begin{definition}
	The Poisson bracket of two functions $f_1, f_2\in C^\infty (M, \mathbb{R})$ is given by 
	\[\{f_1, f_2\}:= \omega(X_{f_1}, X_{f_2}). \]
\end{definition}

In addition to bi-linearity and skew-symmetry, the Poisson bracket satisfies the Jacobi's identity and Leibniz rule. Thus, the operator mapping a function $H$ to $X_H$ is a Lie algebra homomorphism. As a result, the following identity holds \[X_{\{f_1, f_2\}}=- [X_{f_1}, X_{f_2}].\]

Using a Darboux coordinate system, which has $\omega= \sum_{j=1}^n dx_j\wedge dy_j$, the Poisson bracket can be computed as follows:
\[\{f_1, f_2\}= \sum_{i=1}^n \frac{df_1}{dx_j} \frac{df_2}{dy_j}- \frac{df_1}{dy_j} \frac{df_2}{dx_j}\]

\begin{definition} A function F is a first integral of motion for the Hamiltonian system $(M, \omega, H)$ if $\{F, H\}=0$.
\end{definition}

\begin{remark}
	Note that $\{F, H\}=0$ if and only if $F$ is constant along integral curves of $X_H$.
\end{remark}

Two functions are called functional independent if their differentials are linearly independent on $M$ almost everywhere. It means they might be linearly dependent on a set of measure zero. indeed, many interesting phenomenon of a system depend on that set of singularities. 
\begin{definition}
	A Hamiltonian system $(M, \omega, H)$ is integrable if there are $n=\frac{1}{2}\text{dim}(M)$ functionally independent integrals of motion $f_1=H, f_2, ..., f_n$ such that, for all $1\leq i, j\leq n$,
	\[ \{f_i, f_j\}=0.\] 
	Furthermore, the system is called completely integrable if each vector field $X_{f_i}$ is complete.  
\end{definition}

The function\[\Phi=(f_1,..., f_n): M\mapsto \mathbb{R}^n\] is called the moment map. For a completely integrable system, each connected component of the pre-image of a regular value, $\Phi^{-1}(c)$, is a Lagrangian submanifold. Indeed, it must be homogeneous of the form $\mathbb{R}^{n-k}\times \mathbb{T}^k$, where $ \mathbb{T}^k$ is a $k$-dimensional torus. Any compact connected component is then a $n$-dimensional torus. 

Furthermore, for a completely integrable system with a proper moment map, the well-known Liouville-Arnold theorem guarantees there is a local torus $T^n$-action preserving the system. More precisely, around each regular point, there is an open neighborhood $U$ and a diffeomorphism $\chi: F(U)\mapsto \mathbb{R}^n$ such that
\begin{itemize}
	\item For $V=(X\circ \Phi)(U)$  there is a diffeomorphism $(\phi, \theta): U\mapsto V\times \mathbb{T}^n$ 
	\item The diffeomorphism provides natural action and angle coordinates $\{\phi_1,..., \phi_n, \theta_1, ..., \theta_n \}$, which form a Darboux chart. That is, 
\[\omega= \sum_j d\phi_j\wedge d\theta_j.\]
\item The action variables $\{\phi_1,...\phi_n\}$ are functions of the integrals of motion $f_1,... f_n$. 
\item The flow of each $X_{f_i}$ is linear with respect to this coordinate system. 
\end{itemize}

The existence of a global torus action is highly non-trivial. This is equivalent to extending the action across singular points and J. Duistermaat \cite{Duis80} first observed there is some monodromy. The monodromy is described in terms of a covering group such that its non-triviality is a topological obstruction to go from local to global. To understand such behavior, it is inevitable to study the set of singular points where the differential of $\Phi$ is not of the maximal rank.

\subsection{Singular Points of an Integrable System}
We follow \cite{BF_book04, VNS07, PV09, PRV15} closely here. Let $(M^{2n}, \omega)$ be an integrable system with a moment map:
\[\Phi=(f_1,... f_n): M\mapsto \mathbb{R}^n.\]
Generally, the singular set consists of points such that at which the rank of the differential of the moment map is not maximum. The set of singular points has a stratification and, in case the system is analytic, it is an analytic set. 

\begin{definition}
	A point $m\in M$ is called critical/singular with respect to $\Phi$ if 
	\[\text{rank} (d\Phi)(m)<n.\]
	Its image $\Phi(x)$ is called a critical value. 
\end{definition}

The general guideline is to study the linearized system. Towards that goal, $m$ is a critical point if and only if the gradients $\{df_i\}_{i=1}^n$ are linearly dependent at this point. Let's consider a linear combination yielding a function $h$ such that  $dh=0$ at $m$. The flow of $X_h$ generates a (local) one-parameter group of transformations fixing $m$. The differential at $m$ is generated by a linear operator $A_h$. This is called the linearization of the Hamiltonian vector field $X_h$ at a singular point.
\begin{remark}
	In the presence of a Riemannian metric, or an affine connection, the linear operator is exactly the derivation $A_h=A_{X_h}$.
\end{remark}
 Since the flow generated by $X_h$ preserves the symplectic structure, $A_{X_h}$ is a symplectic operator. Thus, it is as an element of the corresponding Lie algebra $\mathfrak{sp}(2n, \mathbb{R})$.  With a choice of a Darboux coordinate system around $m$, one observes that $A_{h}$ corresponds to the Hessian of $h$:
\[\Omega^{-1} d^2 h.\]
Here, $\Omega$ is the matrix corresponding to $\omega$ with respect to the choice of Darboux coordinates. Also, $d^2h$ makes sense as the Hessian since $dh(m)=0$.  

\begin{definition} Let $m$ be a singular point of the moment map $\Phi$ such that $\text{rank}(d\Phi)=i<n$. Let $K(m, \Phi)$ be the commutative subalgebra generated by the linearized operators as described above. $m$ is called non-degenerate if $K(m, \Phi)$ is a Cartan subalgebra in $\mathfrak{sp}(2(n-i), \mathbb{R})$. Consequently, a Hamiltonian system is called non-degenerate if the condition holds for each of its singular point. 
\end{definition} 
An element of a Cartan subalgebra is called regular (or generic) if its eigenvalues are all distinct. The type of the singularity then can be determined by the following procedure. For a regular element $H$ we consider:
\[\text{det}(\Omega^{-1})\text{det}(d^2H-\lambda \Omega)= \text{det}(\Omega^{-1} d^2H -\lambda \id).\]
The roots of the polynomial split into pairs: 
\begin{itemize}
	\item pairs of imaginary roots $i\alpha$, $-i\alpha$: elliptic;
	\item pairs of real roots $\beta, -\beta$: hyperbolic;
	\item quadruples of complex conjugate roots: $\alpha+i\beta, \alpha-i\beta, -\alpha+i\beta, -\alpha-i\beta$: focus-focus.
\end{itemize}

\begin{definition}
	A Cartan subalgebra $K\in \mathfrak{sp}(2j, \mathbb{R})$ is characterized by a triple $(j_1, j_2, j_3)$ which counts the number of roots in each category described above for a regular element and:
	\[j_1+j_2+2j_3= \text{dim}(K)=j.\]
\end{definition}
Indeed, the type of a singularity is tied to that of the corresponding Cartan sub-algebra.  
\begin{definition}
	A non-degenerate singular point $m$ is characterized by its rank and the triple $(j_1, j_2, j_3)$ of $K(m, \Phi)$.  
\end{definition}

For illustration, consider the simple example where $i=0$ and $n=1$. For the description below, we identity an element of  $\mathfrak{sp}(2, \mathbb{R})$ with a homogeneous quadratic polynomial and the singular point with the origin. There are two Cartan subalgebras:
\[
d^2(x^2+y^2)=\begin{bmatrix}
	2 & 0 \\
	0 & 2 
\end{bmatrix} (\text{elliptic}) ~~ \text{ and }~~~
d^2(xy)=\begin{bmatrix}
	0 & 1\\
	1 & 0
\end{bmatrix}(\text{hyperbolic}).~~
\]
For \[ \Omega=
\begin{bmatrix}
	0 & 1 \\
	-1 & 0 
\end{bmatrix},\]
it is immediate to check that the singularities correspond to elliptic and hyperolic, respectively. 
\subsection{Integrable Systems with Two Degree of Freedom}
In this paper, we are mostly interested in KGRS in real dimension four, which corresponds to an integrable system with two degree of freedom. It is immediate that the set of singular points, denoted by $\mathcal{S}$, is closed in $M$ and there is stratification:
\[\mathcal{S}= \mathcal{S}_0 \cup \mathcal{S}_1.\]
Here $\mathcal{S}_i$ denotes the set of singular points of rank $i$ for $i=0, 1$. 

The notions associated with a singular point can be reinterpreted as follow. $m\in \mathcal{S}_1$ is non-degenerate if and only if, supposed that $df_1\neq \vec{0}$, $f_2$ is non-degenerate at $m$ when it is considered as a function on the level set $f_1^{-1}(f_1(m))$. In other words, for a non-degenerate system, $f_2$ is Morse-Bott on each level set of $f_1$. \\

For $m\in \mathcal{S}_0$, recall that a commutative subalgebra in $\mathfrak{sp}(4, \mathbb{R})$ is Cartan if and only if it is two dimensional and contains an element whose eigenvalues are distinct. Indeed, there is a full classification. 

\begin{theorem}
	\label{typeofsing}
	\cite[Theorem 1.3]{BF_book04}
	Let $K$ be a Cartan subalgebras of $\mathfrak{sp}(4, \mathbb{R})$. Then it is conjugate to one of four Cartain subalgebras listed below
	\[
	\begin{bmatrix}
		0 & 0 & -A & 0\\
		0 & 0 & 0 & -B\\
		A & 0 & 0 & 0\\
		0 & B & 0 & 0
	\end{bmatrix},~~
	\begin{bmatrix}
		-A & 0 & 0 & 0\\
		0 & 0 & 0 & -B\\
		0 & 0 & A & 0\\
		0 & B & 0 & 0
	\end{bmatrix},~~
	\begin{bmatrix}
		-A & 0 & 0 & 0\\
		0 & -B & 0 & 0\\
		0 & 0 & A & 0\\
		0 & 0 & 0 & B
	\end{bmatrix},~~
	\begin{bmatrix}
		-A & -B & 0 & 0\\
		B & -A & 0 & 0\\
		0 & 0 & A & -B\\
		0 & 0 & B & A
	\end{bmatrix}.
	\]
	They correspond to elliptic-elliptic, elliptic-hyperbolic, hyperbolic-hyperbolic, focus-focus types in that order. 
\end{theorem}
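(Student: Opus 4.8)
Theorem \ref{typeofsing} classifies the Cartan subalgebras of $\mathfrak{sp}(4,\mathbb{R})$ up to conjugacy, so the plan is to reduce the whole problem to the normal form of a single regular element together with the computation of its centralizer. By the characterization recalled above, a Cartan subalgebra $K$ is a two-dimensional abelian subalgebra containing a \emph{regular} element $H$, i.e.\ one whose four eigenvalues are distinct. Since $K$ is abelian, $K \subseteq \mathfrak{z}(H)$, the centralizer of $H$ in $\mathfrak{sp}(4,\mathbb{R})$; and because a matrix with four distinct eigenvalues has four-dimensional centralizer in $\mathfrak{gl}(4,\mathbb{R})$, intersecting with $\mathfrak{sp}(4,\mathbb{R})$ yields $\dim \mathfrak{z}(H)=2$, so in fact $K=\mathfrak{z}(H)$. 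Thus $K$ is entirely determined by $H$, and it suffices to classify $H$ up to $\text{Sp}(4,\mathbb{R})$-conjugacy and then read off its centralizer.

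The second step is the eigenvalue analysis. The relation $A^{T}\Omega+\Omega A=0$ forces the characteristic polynomial of $H$ to be reciprocal, so eigenvalues occur in pairs $\{\mu,-\mu\}$; since $H$ is real they also occur in conjugate pairs. For distinct eigenvalues this leaves exactly the four configurations $\{\pm i\alpha,\pm i\beta\}$, $\{\pm i\alpha,\pm\beta\}$, $\{\pm\alpha,\pm\beta\}$ and $\{\pm\alpha\pm i\beta\}$, which are the elliptic-elliptic, elliptic-hyperbolic, hyperbolic-hyperbolic and focus-focus cases. Non-conjugacy of the four is then automatic, since conjugation preserves the eigenvalue pattern; the content is to show each pattern has a \emph{unique} conjugacy class with the stated normal form.

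The heart of the argument is the symplectic normal form, and here I would use the pairing rule $(\mu+\nu)\,\omega(v,w)=0$ for eigenvectors of eigenvalues $\mu,\nu$, which follows directly from $\omega(Hv,w)=-\omega(v,Hw)$. In the first three cases this rule makes the real invariant plane attached to a pair $\{\mu,-\mu\}$ a genuine symplectic plane, and distinct pairs are $\omega$-orthogonal; hence $\mathbb{R}^{4}$ splits as an $\omega$-orthogonal sum of two $H$-invariant symplectic planes, on each of which a Darboux basis realizes either the elliptic generator $\alpha\begin{bmatrix}0 & -1\\ 1 & 0\end{bmatrix}$ or the hyperbolic generator $\begin{bmatrix}\beta & 0\\ 0 & -\beta\end{bmatrix}$, and assembling the blocks gives the first three displayed matrices. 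The focus-focus case is the step I expect to be the main obstacle: there the two real $H$-invariant planes determined by the conjugate pairs $\{\alpha\pm i\beta\}$ and $\{-\alpha\pm i\beta\}$ are each $\omega$-isotropic and transverse to one another (since an eigenvalue $\mu$ now pairs with $-\mu$ lying in the \emph{other} plane), so instead of symplectic blocks one must choose symplectically dual bases of the two Lagrangian planes; this produces the form $\begin{bmatrix} C & 0\\ 0 & -C^{T}\end{bmatrix}$ with $C=\begin{bmatrix}-\alpha & -\beta\\ \beta & -\alpha\end{bmatrix}$, i.e.\ the fourth matrix. Verifying that all of these normalizations can be realized inside $\text{Sp}(4,\mathbb{R})$ rather than merely $\text{GL}(4,\mathbb{R})$, together with the Lagrangian-pairing bookkeeping in the focus-focus case, are the delicate points.

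Finally, with $H$ in each normal form I would determine $K=\mathfrak{z}(H)$ by solving $[X,H]=0$ simultaneously with $X\in\mathfrak{sp}(4,\mathbb{R})$. Regularity guarantees a two-dimensional solution space, and a direct computation shows it is spanned precisely by the two-parameter families in $A,B$ listed in the statement --- the parameters scaling the two symplectic blocks independently in the split cases, and scaling the real and rotational parts of the irreducible block in the focus-focus case. Checking that each such family is abelian and consists of semisimple elements confirms it is a Cartan subalgebra, which completes the classification.
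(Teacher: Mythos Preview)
The paper does not supply its own proof of Theorem \ref{typeofsing}: it is quoted verbatim from \cite[Theorem 1.3]{BF_book04} as background and used as a black box (specifically in the proof of Lemma \ref{chardege0}). So there is nothing in the paper to compare your proposal against.

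That said, your plan is essentially the standard Williamson-type argument that underlies the result in \cite{BF_book04}: reduce to the normal form of a regular element via the eigenvalue symmetry $\mu\mapsto -\mu$ and complex conjugation, use the pairing identity $(\mu+\nu)\,\omega(v,w)=0$ to produce either an $\omega$-orthogonal splitting into two symplectic planes (the three split cases) or a pair of transverse Lagrangian planes (focus-focus), and then recover $K$ as the centralizer. The outline is sound. The points you flag as delicate---realizing the change of basis inside $\text{Sp}(4,\mathbb{R})$ rather than $\text{GL}(4,\mathbb{R})$, and the dual-basis bookkeeping in the focus-focus case---are indeed where the work lies, and your proposal identifies them correctly without actually carrying them out. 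One small caution: in the three split cases the sign of the restriction $\omega|_{\text{plane}}$ is not a priori normalized, so when you assemble the two blocks into a global Darboux basis you may need to swap the roles of $x_i$ and $y_i$ in one plane; this is routine but should be said if you write the argument out in full.
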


Recall that the type of a singular point is characterized by its rank and the triple associated with its corresponding Cartan sub-algebra. Therefore, the list above corresponds to the following triples:
\begin{itemize}
	\item Elliptic-elliptic: $(2, 0, 0)$
	\item Elliptic-hyperbolic: $(1, 1, 0)$
	\item Hyperbolic-hyperbolic: $(0, 2, 0)$
	\item Focus-focus: $(0, 0, 1)$
\end{itemize}

It turns out that many properties of a general interable system are captured by the corresponding linearized system. A fundamental result in this spirit is the following normal form characterization. 

\begin{theorem}
	\label{normalform}
	Let $m$ be a non-degenerate singularity of a momentem map $\Phi=(f_1, f_2)$ on a $4$-dimensional symplectic manifold. Then there exist local symplectic coordinates $\{x_1, x_2, y_1, y_2\}$ in a neighborhood of $m$ such that 
	\[\{f_i, q_j\}=0 ~~ \forall i, j\in \{1, 2\}, \]
	where 
	\begin{enumerate}
		\item If $\text{rank}(m)=0$ then $q_i$ is one of the following:
		\begin{itemize}
			\item $q_i=\frac{x_i^2+y_i^2}{2}$ (elliptic bloc)
			\item $q_i= x_iy_i$ (hyperbolic bloc)
			\item $\begin{cases}
				q_1 &=x_1 y_2-x_2y_1,\\
				q_2 &=x_1y_1+x_2y_2.
			\end{cases}$ (focus-focus bloc)
		\end{itemize}
		\item if $\text{rank}(m)=1$ then $q_1=y_1$ (non-singular) and $q_2$ is one of the following:
		\begin{itemize}
			\item $q_2=\frac{x_2^2+y_2^2}{2}$ (elliptic)
			\item $q_2=x_2 y_2$ (hyperbolic)
		\end{itemize}
	\end{enumerate}
	In case there is no hyperbolic bloc, then there is a local diffeomorphism, $\Psi$, of $(\mathbb{R}^2, 0)$ such that
	\[\Phi= \Psi\circ (q_1, q_2).\]
\end{theorem}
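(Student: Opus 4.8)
The plan is to prove the statement in three stages: normalize the quadratic (linearized) data, upgrade this to a full symplectic normalization of the pair $(f_1,f_2)$, and then read off the factorization $\Phi=\Psi\circ(q_1,q_2)$. First I would treat the linear model. Since $m$ is non-degenerate, the algebra $K(m,\Phi)$ generated by the linearized operators $A_h=\Omega^{-1}d^2h$ is a Cartan subalgebra of $\mathfrak{sp}(2(n-i),\mathbb{R})$ with $n-i\in\{1,2\}$. By the Williamson classification recalled in Theorem \ref{typeofsing}, one may choose a linear symplectic basis in which a regular (hence every) element of $K(m,\Phi)$ is block diagonal with elliptic, hyperbolic, and focus-focus blocks; reading off the associated quadratic Hamiltonians produces exactly the list of $q_j$ in the statement, with the non-singular direction $q_1=y_1$ appearing in the rank-one case. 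This fixes the $2$-jet of the desired coordinates.

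The heart of the theorem is that the nonlinear system is symplectically conjugate near $m$ to this linear model. I would proceed by Birkhoff-type normalization coupled with a symmetry argument. Expanding $f_1,f_2$ about $m$, one solves, order by order, homological equations of the form $\{Q,\xi\}=\eta$ with $Q$ a regular quadratic element of $K(m,\Phi)$; formal solvability holds because $Q$ is semisimple (its eigenvalues are distinct), so $\mathrm{ad}_Q$ is invertible on the complement of its kernel, and this eliminates the part of $f_i$ not expressible through $q_1,q_2$. The substantive issue is to realize this formal normalization by a genuine symplectomorphism. In the elliptic directions one averages over the local Hamiltonian torus action generated by the elliptic $q_j$, giving convergence directly. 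For hyperbolic and focus-focus directions, where no compact symmetry is available, one invokes the Eliasson--Vey--Miranda--Zung theory (see \cite{VNS07,PRV15,PV09,BF_book04}): in the real-analytic category, which is the one relevant to KGRS, convergence of the normalizing transformation follows from Vey/R\"ussmann-type estimates, while in the smooth category the flat remainder is absorbed by a Moser path argument on the family of symplectic forms interpolating between $\omega$ and the model. This produces the symplectomorphism to the linear model together with the integrals $q_j$ satisfying $\{f_i,q_j\}=0$.

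Finally, suppose no hyperbolic block occurs. Then the local fibers of $(q_1,q_2)$ are connected, since elliptic blocks contribute point or circle factors and a focus-focus block contributes a connected (pinched-torus) fiber, whereas the hyperbolic block $q=xy$ is the sole source of locally disconnected level sets. By the previous stage the fibers of $\Phi$ coincide with those of $(q_1,q_2)$ near $m$, so the two maps share the same connected level sets, and there is a well-defined map $\Psi$ on a neighborhood of the origin with $\Phi=\Psi\circ(q_1,q_2)$; since $d\Phi$ and $d(q_1,q_2)$ drop rank along the same locus, $\Psi$ is a local diffeomorphism.

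The main obstacle is the second stage in the presence of hyperbolic or focus-focus blocks: absent a compact torus to average over, the analytic convergence (respectively the smooth realization of the flat correction) of the formal normalizing map is not automatic, and this is precisely where the deep input of \cite{PRV15,VNS07} enters. Restricted to the purely elliptic case, which is what the KGRS application will eventually require, the argument reduces to an equivariant Darboux theorem together with torus averaging and becomes essentially elementary.
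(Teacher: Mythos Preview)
The paper does not prove this theorem at all: it is stated in Section~\ref{preliminary} as a preliminary result quoted from the literature (the Eliasson/Vey/Miranda--Zung normal form, with references to \cite{BF_book04, VNS07, PV09, PRV15}), and no proof is given or attempted. Your proposal is therefore not comparable to ``the paper's own proof,'' because there is none.

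That said, your sketch is a broadly accurate outline of how the classical proof goes: Williamson's linear classification fixes the $2$-jet, a formal Birkhoff normalization handles higher-order terms via the homological equation, and the hard part is realizing the formal series by an honest symplectomorphism, where the elliptic case is handled by torus averaging and the hyperbolic/focus-focus cases require the deeper analysis of Eliasson, Vey, R\"ussmann, Miranda, and Zung. Your identification of the main obstacle is correct. One small caveat: in the final paragraph you assert that since the fibers of $\Phi$ and $(q_1,q_2)$ coincide the induced map $\Psi$ is automatically a local diffeomorphism; this step needs slightly more care (one typically argues via the implicit function theorem applied to the smooth dependence established in the normalization, not merely from equality of level sets and rank loci), though the conclusion is of course correct. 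For the purposes of this paper, however, the theorem is simply being invoked as background, and no proof is expected.
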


\section{Integrable}
\label{integrable}
In this section, we will show that a KGRS is indeed an integrable Hamiltonian system. The following is well-known to the experts; for example, see \cite{CST09note,CDSexpandshriking19}. We'll repeat a proof for completeness as it is crucial to our development.
\begin{lemma}
	\label{fMorseBott}
	Let $(M, g, J, f, \lambda)$ be a KGRS. Then, $f$ is a Morse-Bott function with even indices. 
\end{lemma}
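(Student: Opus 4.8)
The plan is to realize the critical set of $f$ as the zero set of a Killing field and then read off both the Morse-Bott property and the parity of the index from the linear algebra of $\He f$. By Lemma \ref{killing1}(i) the vector field $X := J\nabla f$ is Killing, and since $J$ is invertible the critical set $\{\nabla f = 0\}$ coincides exactly with the zero set $Z$ of $X$. First I would invoke the classical structure theory for zero sets of Killing fields (see, e.g., \cite{KNvolumeI96}): each connected component $N$ of $Z$ is a closed totally geodesic submanifold of even codimension, and its tangent space at a point $p\in N$ equals the kernel of the linearization $L_p\colon v\mapsto \nabla_v X|_p$.

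Next I would compute this linearization. Because $(M,g,J)$ is K\"ahler, $\nabla J=0$ by Proposition \ref{charKahler}, so writing $H(v):=\nabla_v\nabla f$ for the symmetric endomorphism associated with $\He f$, one has $\nabla_v X = \nabla_v(J\nabla f)=J\,\nabla_v\nabla f = J\,H(v)$ at every point; hence $L_p = J\circ H|_p$ at $p\in Z$. Since $J$ is invertible, $\ker L_p = \ker H|_p = \ker \He f|_p$. Combined with the structure theorem, this gives $T_p N = \ker \He f|_p$, which is precisely the non-degeneracy condition in the definition of a non-degenerate critical submanifold. Taking connected components, $f$ is Morse-Bott.

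For the parity of the index I would use the $J$-invariance of $\He f$: the soliton equation gives $\He f = \lambda g - \Rc$, and since both $g$ and $\Rc$ are $J$-invariant on a K\"ahler manifold, $H$ commutes with $J$. Consequently each eigenspace of the symmetric operator $H$ is $J$-invariant, hence a complex subspace and in particular even-dimensional. The negative eigenspace of $H|_p$ is one such subspace, so its dimension---the index of $\He f$ at $p$---is even, as claimed.

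I expect the step carrying the genuine content to be the structure theorem for zero sets of Killing fields, namely that $Z$ is a smooth totally geodesic submanifold whose tangent space at each point is $\ker L_p$; once that is in hand, the remaining two steps are short computations using $\nabla J=0$ and the $J$-invariance of $\He f$. One subtlety worth checking is that the constant-rank / connectedness requirements built into the definition of a non-degenerate critical submanifold are automatically supplied by passing to connected components of $Z$, along which $\dim\ker\He f$ is then constant.
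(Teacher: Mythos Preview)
Your proposal is correct and follows essentially the same approach as the paper: both identify the critical set of $f$ with the zero set of the Killing field $J\nabla f$, invoke Kobayashi's structure theorem for such zero sets, and deduce the index parity from the $J$-invariance of $\He f$. Your presentation of the non-degeneracy step via the explicit computation $\ker L_p=\ker(J\circ H)=\ker \He f|_p$ is somewhat more direct than the paper's appeal to the differential of the isometry flow, and your phrasing ``each eigenspace is $J$-invariant, hence even-dimensional'' is more careful than the paper's ``each eigenvalue is of multiplicity two,'' but the content is the same.
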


\begin{proof}
	Since $J\nabla f$ is a Killing vector field, it generates an one-parameter family of isometries $\varphi_t$. The set of critical points of $f$ coincides with the set of fixed points of $\Phi_t$. By Kobayashi \cite{kobay58}, each connected component is a totally geodesic submanifold of an even co-dimension. 
	
	Moreover, the Hessian of $f$ is intrinsically related to the differential of $\varphi_t$ (see [Prop.VI.4.1]\cite{KNvolumeI96}). Since isometries commute with the exponential map, the differential is non-identity in each normal direction. Thus, the Hessian of $f$ is non-degenerate on the normal bundle to each connected critical component. Consequently, $f$ is Morse-Bott.
	
	For the index, recall that $\He f$ commutes with $J$. Thus, each eigenvalue is of multiplicity two. Therefore, each index is even.  
\end{proof}
\begin{corollary}
	\label{connectedlevel}
	Let $(M, g, J, f, \lambda)$ be a KGRS. Supposed that $f$ is proper, then each level set is connected. Consequently, each local minimum or maximum must be a global one.
\end{corollary}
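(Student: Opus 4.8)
The plan is to feed the Morse--Bott structure of $f$ from Lemma \ref{fMorseBott} into a standard connectedness theorem for such functions, and then to read off the statement on extrema by a $0$-handle bookkeeping argument. First I would record the two inputs that make the machinery run. By Lemma \ref{fMorseBott}, $f$ is Morse--Bott, and because $\He f$ commutes with $J$ every eigenvalue of $\He f$ occurs with even multiplicity along each critical submanifold. Hence not only the index (the number of negative eigenvalues on the normal bundle) but also the coindex (the number of positive eigenvalues) is even at every critical submanifold; in particular neither index nor coindex ever equals $1$. Properness of $f$ guarantees that $f^{-1}([a,b])$ is compact for all $a\le b$, and since the soliton is complete the field $\nabla f$ is complete; together these let the usual deformation--retraction arguments of Morse--Bott theory be carried out on the noncompact manifold $M$ by working inside the compact slabs $f^{-1}([a,b])$.

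Next I would invoke the connectedness theorem for proper Morse--Bott functions whose critical submanifolds all have index and coindex different from $1$ (this is exactly the connectedness statement used in \cite{PRV15}, in the spirit of the classical Atiyah and Guillemin--Sternberg lemmas). Its content is that connectedness of the level set $f^{-1}(c)$ and of the sublevel set $\{f\le c\}$ is preserved as $c$ crosses a critical value: near a critical submanifold $N$ of index $\lambda$ and coindex $\mu$ the Morse--Bott normal form is $f=f(N)-|x_-|^2+|x_+|^2$, and the local level and sublevel pieces stay connected precisely because $\lambda,\mu\neq 1$. Propagating this across all critical values of the connected manifold $M$ yields that every level set of $f$ is connected, which is the first assertion.

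For the statement on extrema, suppose $m$ is a local minimum with $f(m)=c$ that is not a global minimum. Since $M$ is connected, $f(M)$ is an interval, and the two hypotheses force $c$ to lie in its interior. The critical submanifold $N\ni m$ has index $0$, so its negative normal bundle has rank $0$; passing the value $c$ therefore attaches a tubular neighborhood of $N$ as a \emph{disjoint} $0$-handle, i.e. for small $\epsilon>0$ the set $\{f\le c+\epsilon\}$ is homotopy equivalent to $\{f\le c-\epsilon\}\sqcup(\text{tubular neighborhood of }N)$. Because $c$ is interior, $\{f\le c-\epsilon\}$ is nonempty, so $\{f\le c+\epsilon\}$ has at least two components, contradicting the connectedness of sublevel sets established above. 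Hence every local minimum is global, and the local-maximum case follows symmetrically by working with coindex $0$ and superlevel sets.

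I expect the main obstacle to be the passage from the compact to the merely proper setting: one must check that the connectedness theorem and the $0$-handle description survive when $M$ is noncompact, using only properness of $f$ and completeness of $\nabla f$ (so that sublevel or superlevel sets between consecutive regular values deformation retract onto one another), rather than compactness of $M$. Here the a priori bounds for solitons help: $\SS$ is bounded, so the conservation law \eqref{nablafandS} yields one-sided boundedness of $f$, which keeps the relevant slabs and sublevel sets well behaved.
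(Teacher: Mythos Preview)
Your approach is essentially the same as the paper's: both invoke Lemma \ref{fMorseBott} to get even index and coindex, then apply the proper Morse--Bott connectedness result of \cite{PRV15} (their Prop.~4.5) for the level sets, and the paper cites \cite[Lemma 5.1]{PRV15} for the extrema statement where you instead sketch the standard $0$-handle argument directly. One small remark: your final paragraph on soliton scalar-curvature bounds and one-sided boundedness of $f$ is unnecessary here---the corollary assumes only that $f$ is proper, and the connectedness theorem in \cite{PRV15} is already stated for proper Morse--Bott functions, so no additional control from the soliton structure is needed at this stage.
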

\begin{proof}
	It is shown by \cite[Prop 4.5]{PRV15}, the level set of a proper Morse-Bott function with index and coindex different from one is connected (the compact case was observed by \cite{Atiyah82}; see also \cite[Lemma 3.46]{Nico07book} for an another proof). The conclusion then follows from Lemma \ref{fMorseBott} and the fact that the dimension of the manifold is even. 
	
	The second statement follows from \cite[Lemma 5.1]{PRV15}. 
\end{proof}
\begin{lemma}
	\label{poissonfS} Let $(M, g, J, \omega, f, \lambda)$ be a KGRS. Then,
	\[\omega(\nabla f, \nabla \SS)=0.\]
\end{lemma}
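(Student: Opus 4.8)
The plan is to recognize that this expression is, up to sign, the derivative of $\SS$ along the Hamiltonian vector field $J\nabla f$, and then exploit that this field is Killing. Using the skew-adjointness of $J$ (namely $g(JX,Y)=-g(X,JY)$, which follows from the Hermitian identity $g(JX,JY)=g(X,Y)$), one rewrites
\[
\omega(\nabla f,\nabla\SS)=g(\nabla f,J\nabla\SS)=-g(J\nabla f,\nabla\SS)=-d\SS(J\nabla f).
\]
Hence it suffices to show that $\SS$ is constant along the flow of $J\nabla f$. I note that this quantity is exactly $\{f,\SS\}$ in the conventions set up later, so the lemma is really the statement that $f$ and $\SS$ Poisson-commute.

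By Lemma \ref{killing1}, $J\nabla f$ is a Killing vector field, so it generates a one-parameter group of isometries $\varphi_t$. Scalar curvature is an isometry invariant, hence $\SS\circ\varphi_t=\SS$ for every $t$; differentiating at $t=0$ yields $(J\nabla f)(\SS)=d\SS(J\nabla f)=0$, which closes the argument.

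A purely algebraic route is also available and worth recording as a cross-check. From the soliton identity \eqref{rcandf} one has $\nabla\SS=2\,\Rc(\nabla f)$, where $\Rc$ denotes the symmetric Ricci operator, so that
\[
\omega(\nabla f,\nabla\SS)=2\,g\bigl(\nabla f,\,J\Rc(\nabla f)\bigr).
\]
On a K\"ahler manifold $\Rc$ is $J$-invariant, i.e. the Ricci operator commutes with $J$; combined with $\Rc=\Rc^{*}$ and $J^{*}=-J$, this makes $J\Rc$ skew-adjoint, so $g(\nabla f,J\Rc(\nabla f))=0$ for any vector field, and in particular for $\nabla f$.

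The main point is conceptual rather than computational: there is no serious obstacle once one observes that the $J$-invariance of the Ricci tensor on a K\"ahler manifold (equivalently, that $J\nabla f$ is Killing) is precisely the ingredient that turns the naive pairing into a manifestly antisymmetric expression. I would lead with the Killing-field version, since it makes transparent the geometric meaning that $\SS$ is a first integral for the flow of $X_f=J\nabla f$, hence $\{f,\SS\}=0$, which is exactly the integrability input needed for Theorem \ref{main1}.
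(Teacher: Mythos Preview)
Your proposal is correct. Your algebraic cross-check is essentially the paper's own proof: the paper writes $\omega(\nabla f,\nabla\SS)=2\omega(\nabla f,\Rc(\nabla f))=-2\Rc(\nabla f,J\nabla f)$ and then uses the $J$-invariance of $\Rc$ on a K\"ahler manifold to conclude this vanishes, which is exactly your observation that $J\Rc$ is skew-adjoint. Your primary argument, via the Killing property of $J\nabla f$, is a genuinely different route: rather than invoking the $J$-invariance of $\Rc$ directly, you use its consequence (Lemma~\ref{killing1}) that $J\nabla f$ generates isometries, and then the general fact that scalar curvature is an isometry invariant. This packages more into a single geometric step, but has the conceptual payoff you point out: it makes transparent that $\SS$ is constant along the flow of $X_f=J\nabla f$, i.e.\ $\{f,\SS\}=0$, which is precisely the integrability input for Theorem~\ref{main1}. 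The paper's version is pure pointwise linear algebra; yours is more global and dynamical. Both are valid and short.
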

\begin{proof}
	We compute
	\begin{align*}
		\omega(\nabla f, \nabla \SS) &= 2\omega(\nabla f, \Rc(\nabla f))\\
		&= 2g(\nabla f, J\Rc(\nabla f)),\\
		&= -2 g(J(\nabla f), \Rc(\nabla f))\\
		&=- 2\Rc(\nabla f, J\nabla f). 
	\end{align*}
Since $(M, g, J, \omega)$ is K\"{a}hler, $\Rc$ commutes with $T$. Thus, 
\[\Rc(\nabla f, J\nabla f)=\Rc(J\nabla f, J^2 \nabla f)=-\Rc(\nabla f, J\nabla f)=0.\]

\end{proof}

\begin{lemma}
	\label{completevf} Let $(M, g, J, \omega, f, \lambda)$ be a KGRS.
	If $f$ is proper then $J\nabla \SS$ and $J\nabla f$ are complete. 
\end{lemma}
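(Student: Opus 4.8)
The plan is to confine the integral curves of both $J\nabla f$ and $J\nabla\SS$ to compact sets and then invoke the escape lemma; completeness is then automatic. The crucial observation is that each of these two vector fields is tangent to the level sets of $f$, so that $f$ is a conserved quantity along their flows.

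First I would verify this tangency by a one-line computation in each case. For $J\nabla f$, using $df(X)=g(\nabla f, X)$ and the defining relation $\omega(X,Y)=g(X,JY)$,
\[ df(J\nabla f)=g(\nabla f, J\nabla f)=\omega(\nabla f,\nabla f)=0, \]
by skew-symmetry of $\omega$. For $J\nabla\SS$ the same manipulation gives
\[ df(J\nabla\SS)=g(\nabla f, J\nabla\SS)=\omega(\nabla f,\nabla\SS)=0, \]
where the final equality is precisely Lemma \ref{poissonfS}. Thus $\tfrac{d}{dt}f(\gamma(t))=df(\dot\gamma)=0$ along any integral curve $\gamma$ of either field, so $f$ is constant along the flow. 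Note this requires no regularity of the level sets themselves, only the conservation identity.

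Next I would use properness: since $f$ is proper, each level set $f^{-1}(c)$ is the preimage of the compact set $\{c\}$ and hence is compact. Therefore a maximal integral curve of $J\nabla f$ (resp.\ $J\nabla\SS$) through a point $p$ has its entire image contained in the single compact level set $f^{-1}(f(p))$. The escape lemma now applies: a maximal integral curve of a smooth vector field whose image lies in a compact set must be defined for all $t\in\mathbb{R}$. Hence both $J\nabla f$ and $J\nabla\SS$ are complete.

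The argument is entirely elementary, so there is no serious obstacle; the only point worth emphasizing is conceptual rather than computational. We obtain completeness of these two Hamiltonian vector fields from properness of $f$ \emph{alone}, without assuming geodesic completeness of $(M,g)$ or completeness of $\nabla f$. The step that does the real work is recognizing that the conservation law $\{f,\SS\}=0$ supplied by Lemma \ref{poissonfS} is exactly what forces $J\nabla\SS$ to preserve the (compact) level sets of $f$, which is what traps its integral curves.
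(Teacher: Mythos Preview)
Your proof is correct and follows essentially the same approach as the paper: both arguments use Lemma~\ref{poissonfS} to show that $f$ is conserved along the flows of $J\nabla f$ and $J\nabla\SS$, then invoke properness of $f$ to trap integral curves in compact level sets and conclude completeness. Your version is slightly more explicit about the escape lemma and the computation for $J\nabla f$, but the content is the same.
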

\begin{proof}By Lemma \ref{poissonfS}, 
	\[g(\nabla f, J\nabla \SS)=\omega (\nabla f, \nabla \SS)=0.\]
	Thus, $J\nabla \SS$ is perpendicular to  $\nabla f$. Consequently, given $p\in M$, the flow generated by either $J\nabla \SS$ or $J\nabla f$ starting at $p$ is in a connected component of the level set of $f^{-1}(f(p))$. Such a level set is compact since $f$ is proper. The result then follows. 
\end{proof}

We are ready to prove the integrability of a KGRS. 

\begin{proof}[Proof of Theorem \ref{main1}]
	First, $f$ is Morse-Bott by Lemma \ref{fMorseBott}. The connectedness of its level sets follows from Corollary \ref{connectedlevel}. Then, one recalls that, in the presence of a K\"{a}hler structure,
	\[X_f=J\nabla f. \]
	We compute, using Lemma \ref{poissonfS}
	\begin{align*}
		\{f, \SS\} &=\omega (X_f, X_{\SS})\\
		&= \omega(J\nabla f, J\nabla \SS)\\
		&=\omega(\nabla f, \nabla \SS)=0.
	\end{align*}
	Since the dimension of $M$ is four, the result follows. 
\end{proof}
\begin{remark}
	For a KGRS in a higher dimension, it is expected that $f$ and $\SS$ form part of the moment map for a suitable integrable system. 
\end{remark}

\begin{proof}[Proof of Corollary \ref{cor1}]
	Recall that, on a gradient Ricci soliton, $g$ and $f$ are real analytic \cite{bando87, Ilocal, DW11coho}. Thus, the function 
	\[|\nabla f\wedge \nabla \SS|=|\nabla f|^2|\nabla \SS|^2- g(\nabla f, \nabla \SS)^2\] is real analytic. Since $M$ is connected, it is either zero everywhere or there is a dense subset of $M$ on which the function is non-zero. That is, either $\nabla f$ is parallel to $\nabla \SS$ everywhere or $f$ and $\SS$ are functionally independent. By \cite{tran23contact}, the first case happens if and only if either $(M, g, f, J)$ is 
\begin{itemize} 
	\item a product of a constant curvature surface with a $2D$ K\"{a}hler GRS, or
	\item  of cohomogeneity one, $f$ is invariant by its action, and each principal orbit is a connected deformed homogeneous Sasakian structure. 
\end{itemize}
It is straightforward to check that such a soliton is isometrically toric and, thus, completely integrable. The second case follows from Theorem \ref{main1} and Lemma \ref{completevf}.

\end{proof}

\section{The Hessian of $\SS$}
\label{aboutHessS}
In this section, we will consider a KGRS $(M, g, J, \omega, f, \lambda)$ and derive a general formula for the Hessian of the scalar curvature. We start with a series of useful results. Let $H_f$ be the $(1, 1)$ tensor field associated with $\He f$. That is, 
\[ g(H_f X, Y)= \He f(X, Y)= g(\nabla_X\nabla f, Y).\]
Note that $H_f$ commutes with $J$ which is parallel. We also use $\He^2 f$ to denote the $(2,0)$ tensor dual to $H_f^2$. 
\begin{lemma} We have
	\begin{align*}
		0 &= \He f(\nabla f, J\nabla \SS).
	\end{align*}
\end{lemma}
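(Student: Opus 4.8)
The plan is to reduce everything to two elementary vanishing facts by first rewriting $H_f\nabla f$ via the soliton equation. Since $\He f(\nabla f, J\nabla\SS) = g(H_f\nabla f, J\nabla\SS)$, the whole computation hinges on identifying $H_f\nabla f$. Reading the soliton equation $\Rc + \He f = \lambda g$ at the level of $(1,1)$-tensors gives $H_f = \lambda\,\id - \Rc$, so
\[
H_f\nabla f = \lambda\,\nabla f - \Rc(\nabla f).
\]
Now I would invoke the contracted-Bianchi identity (\ref{rcandf}), namely $\Rc(\nabla f) = \tfrac{1}{2}\nabla\SS$, to conclude
\[
H_f\nabla f = \lambda\,\nabla f - \tfrac{1}{2}\nabla\SS.
\]

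Substituting this into the quantity to be computed yields
\[
\He f(\nabla f, J\nabla\SS) = g\!\left(\lambda\,\nabla f - \tfrac{1}{2}\nabla\SS,\ J\nabla\SS\right) = \lambda\,g(\nabla f, J\nabla\SS) - \tfrac{1}{2}\,g(\nabla\SS, J\nabla\SS).
\]
It then remains to observe that both terms on the right are zero. For the first term, $g(\nabla f, J\nabla\SS) = \omega(\nabla f, \nabla\SS)$ by the definition $\omega(X,Y)=g(X,JY)$, and this vanishes by Lemma \ref{poissonfS}. For the second term, the compatibility $g(JX,JY)=g(X,Y)$ makes $J$ skew-adjoint, i.e. $g(X,JX)=0$ for every $X$; applying this with $X=\nabla\SS$ kills $g(\nabla\SS, J\nabla\SS)$.

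I do not anticipate a genuine obstacle here: the statement is a short algebraic consequence of the soliton structure once $H_f\nabla f$ has been expressed through $\nabla f$ and $\nabla\SS$. The only point requiring a little care is the sign/ordering convention in $\omega(X,Y)=g(X,JY)$ so that the first term is correctly matched to the already-established identity in Lemma \ref{poissonfS}; with that convention fixed, both terms vanish and the identity follows immediately.
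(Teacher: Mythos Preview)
Your proof is correct and follows essentially the same route as the paper: both split $\He f(\nabla f, J\nabla\SS)$ via the soliton equation $\He f=\lambda g-\Rc$, kill the $\lambda g$-term using Lemma~\ref{poissonfS}, and kill the remaining term using $g(X,JX)=0$. The only cosmetic difference is that you substitute $\Rc(\nabla f)=\tfrac{1}{2}\nabla\SS$ before pairing with $J\nabla\SS$, whereas the paper substitutes it after, arriving at $g(J\Rc(\nabla f),\Rc(\nabla f))=0$ instead of $g(\nabla\SS,J\nabla\SS)=0$; these are the same identity up to a factor of $2$.
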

\begin{proof}
	By Lemma \ref{poissonfS}, 
	\[g(J\nabla \SS, \nabla f)=0.\]
	In addition, as $\Rc$ commutes with $J$,
	\begin{align*}
		\Rc(\nabla f, J\nabla \SS) &= -\Rc(J\nabla f, \nabla \SS),\\
		&= -g(\Rc(J\nabla f), \nabla \SS),\\
		&= 2 g(J\Rc(\nabla f), \Rc(\nabla f))=0.
	\end{align*}
	The result then follows from the soliton equation $\He f+\Rc =\lambda g$. 
\end{proof}
Recall, for a vector field $X$, $A_X$ is the derivation
\[A_X Y=\lie_X Y-\nabla_X Y=-\nabla_Y X.\]

\begin{lemma} We have
	\label{AvsHess}
	\[A_{J\nabla f}=-J\circ H_f.\]
\end{lemma}
\begin{proof}
We compute:
\begin{align*}
	A_{J(\nabla f)}Y &= -\nabla_Y (J\nabla f)=-J \nabla_Y \nabla f=-J H_f(Y).
\end{align*}
\end{proof}
\begin{lemma} We have
	\label{covHf}
	\[\nabla_X H_f=-J\circ \RR(X, J\nabla f)\]
\end{lemma}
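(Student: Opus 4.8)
The plan is to avoid differentiating $H_f$ directly through third covariant derivatives of $f$, and instead to exploit the fact that $J\nabla f$ is a Killing vector field together with the structure equation relating the covariant derivative of its associated derivation to the curvature operator. Set $K := J\nabla f$, which is Killing by Lemma \ref{killing1}(i). The Killing-field identity recalled at the end of the Notation subsection, namely $\nabla_Y(A_X) = \RR(X, Y)$ for a Killing field $X$, then reads
\[\nabla_X(A_K) = \RR(K, X)\]
for every $X$, where both sides are understood as $(1,1)$ tensors (curvature operators).

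The next step is to rewrite each side using the K\"{a}hler structure. On the one hand, Lemma \ref{AvsHess} gives $A_K = A_{J\nabla f} = -J\circ H_f$, and since $\nabla J = 0$ on a K\"{a}hler manifold (Proposition \ref{charKahler}), the Leibniz rule for the composition of $(1,1)$ tensors yields
\[\nabla_X(A_K) = \nabla_X(-J\circ H_f) = -(\nabla_X J)\circ H_f - J\circ(\nabla_X H_f) = -J\circ(\nabla_X H_f).\]
On the other hand, the antisymmetry of the curvature operator in its first two arguments gives $\RR(K, X) = \RR(J\nabla f, X) = -\RR(X, J\nabla f)$. Equating the two expressions produces $-J\circ(\nabla_X H_f) = -\RR(X, J\nabla f)$, i.e. $J\circ(\nabla_X H_f) = \RR(X, J\nabla f)$. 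Composing once more with $J$ on the left and using $J^2 = -\id$ gives the asserted identity
\[\nabla_X H_f = -J\circ \RR(X, J\nabla f).\]

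The computation is genuinely short, so the only real care lies in the bookkeeping of signs and of the argument order in $\RR$: it is precisely the interplay between the paper's Killing-field convention $\nabla_Y(A_X)=\RR(X,Y)$ and the curvature antisymmetry that yields the final minus sign, and one must also confirm that the first term $(\nabla_X J)\circ H_f$ in the Leibniz expansion vanishes before simplifying. I do not expect a serious obstacle here; the conceptual point is simply recognizing that the derivative of $H_f$ should be routed through the derivation $A_{J\nabla f}$ rather than computed intrinsically. A purely local alternative would be to note $(\nabla_X H_f)Y = \nabla^2_{X,Y}\nabla f$ and attempt to commute third derivatives via curvature, but this does not close without reinvoking the Killing structure, so the argument through $A_K$ above is the natural and efficient route.
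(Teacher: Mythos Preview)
Your proof is correct and follows essentially the same route as the paper: both use the Killing-field identity $\nabla_X(A_{J\nabla f})=\RR(J\nabla f,X)$, rewrite $A_{J\nabla f}=-J\circ H_f$ via Lemma~\ref{AvsHess}, and invoke $\nabla J=0$ to reduce the Leibniz expansion. Your write-up simply makes explicit the final step of composing with $J$, which the paper leaves implicit.
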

\begin{proof}
	Recall that $J\nabla f$ is a Killing vector field. Thus, 
	\[\nabla_X (A_{J\nabla f})=\RR(J\nabla f, X).\]
	Since $J$ is parallel, we compute, 
	\begin{align*}
		-R(X, J\nabla f)&= \nabla_X (A_{J\nabla f})=-\nabla_X (J\circ H_f)\\
		&=-(\nabla_X J)H_f -J\circ \nabla_X H_f= -J\circ \nabla_X H_f.
	\end{align*}
\end{proof}
\begin{proposition} \label{hessS} 
	Let $(M, g, J, \omega, f, \lambda)$ be a KGRS. The Hessian of the scalar curvature is computed as follows:
		\begin{align*}
		H_\SS (X) &= 2\lambda H_f(X)-2H_f^2(X)+2 J\circ \RR(X, J\nabla f)(\nabla f),\\
		\He \SS (X, Y)&=2\lambda \He f(X, Y)-2 \He^2 f(X,Y)-2 \RR(X, J\nabla f, Y, J\nabla f).
	\end{align*}
	
\end{proposition}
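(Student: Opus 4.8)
The plan is to differentiate the first-order identity $\nabla\SS = 2\Rc(\nabla f)$ recorded in (\ref{rcandf}) and substitute the structural relations already established. First I would rewrite the soliton equation $\Rc+\He f=\lambda g$ at the level of $(1,1)$-tensors as $\Rc=\lambda\,\id-H_f$, so that
\[\nabla\SS = 2\,\Rc(\nabla f) = 2\lambda\,\nabla f - 2H_f(\nabla f).\]
This reduces the whole computation to covariantly differentiating the vector field $H_f(\nabla f)$.

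Next I would apply $\nabla_X$ and use the Leibniz rule for the covariant derivative of the $(1,1)$-tensor $H_f$ acting on $\nabla f$:
\[H_\SS(X) = \nabla_X(\nabla\SS) = 2\lambda\,\nabla_X\nabla f - 2(\nabla_X H_f)(\nabla f) - 2H_f(\nabla_X\nabla f).\]
Since $\nabla_X\nabla f = H_f(X)$, the first and third terms collapse to $2\lambda H_f(X)$ and $-2H_f^2(X)$. For the middle term I would invoke Lemma \ref{covHf}, namely $\nabla_X H_f = -J\circ\RR(X,J\nabla f)$, which gives $-2(\nabla_X H_f)(\nabla f)=2\,J\circ\RR(X,J\nabla f)(\nabla f)$. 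Assembling the three pieces produces the first displayed formula for $H_\SS(X)$.

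To pass to the $(2,0)$ form I would pair $H_\SS(X)$ with $Y$; the first two terms become $2\lambda\He f(X,Y)$ and $-2\He^2 f(X,Y)$ directly from the definitions of $H_f$ and $H_f^2$. The only delicate point—and the main obstacle—is the curvature term, where I must verify
\[g\bigl(J\,\RR(X,J\nabla f)(\nabla f),\,Y\bigr) = -\,\RR(X,J\nabla f,Y,J\nabla f).\]
Using that $J$ is skew-adjoint with respect to $g$, the left-hand side equals $-\RR(X,J\nabla f,\nabla f,JY)$. Applying the K\"{a}hler identity $\RR(\cdot,\cdot,JZ,JW)=\RR(\cdot,\cdot,Z,W)$ to the last two slots converts the pair $(\nabla f,JY)$ into $(J\nabla f,-Y)$, and then the antisymmetry of $\RR$ in its final pair rearranges this into $\RR(X,J\nabla f,Y,J\nabla f)$; tracking the signs yields the identity above. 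Substituting gives the second formula. I expect this curvature bookkeeping, rather than the differentiation, to be where care is needed, since a single misapplied symmetry or dropped sign changes the result; everything else is a direct consequence of (\ref{rcandf}), the soliton equation, and Lemma \ref{covHf}.
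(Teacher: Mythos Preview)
Your proof is correct and follows essentially the same route as the paper: differentiate $\nabla\SS=2(\lambda\,\id-H_f)(\nabla f)$, use the Leibniz rule together with Lemma~\ref{covHf}, and then pair with $Y$. In fact you supply more detail than the paper does on the final curvature manipulation, where the paper stops at $-2g(\RR(X,J\nabla f)(\nabla f),JY)$ and leaves the K\"{a}hler identity implicit.
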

\begin{proof}
	Recall \[\nabla \SS=2\Rc(\nabla f)=2(\lambda g-H_f)(\nabla f).\]
	Thus,
	\begin{align*}
		\nabla_X \nabla\SS &=2 \nabla_X (\lambda \nabla f- H_f(\nabla_f))\\
		&= 2\lambda H_f(X)- 2(\nabla_X H_f)(\nabla f)-2H_f(\nabla_X \nabla_f)\\
		&= 2\lambda H_f(X)-2H_f^2(X)+2(J\circ \RR(X, J\nabla f))(\nabla f).
	\end{align*}
	Consequently,
	\begin{align*}
		\He \SS(X, Y)&= g(\nabla_X \nabla\SS, Y)\\
		&=2\lambda \He f(X, Y)-2\He f(H_f X, Y)-2 g(\RR(X, J\nabla f)(\nabla f), JY).
	\end{align*}
\end{proof}
In particular, 
\[H_S J\nabla f=2\lambda H_f(J\nabla f)-2H_f^2(J\nabla f)=J H_f(\nabla \SS).\]

The following calculation might be of independent interest. Recall \cite[Theorem 3.2.2]{pe06book}, in a small neighborhood of a regular point, we have
\begin{align*}
	(\nabla_{\nabla f}H_f)(X)+H_f^2(X)-\nabla_X (H_f(\nabla f))&=-\RR(X, \nabla f)\nabla f;\\
	\nabla_{\nabla f}\He f+ \He^2 f-\He(\frac{1}{2}|\nabla f|^2)&=-\RR(\cdot, \nabla f, \nabla f, \cdot),\\
	\lie_{\nabla f}\He f &= \nabla_{\nabla f}\He f+2\He^2 f. 
\end{align*}
It is noted that covariant derivation commutes with type changes. 

\begin{lemma} $\nabla_X H_f$ is $J$-invariant and, for an unit vector field $E_i\perp \text{span}(\nabla f, J\nabla f)$,  
	\[(\nabla_{\nabla f}\He f)(E_i, E_i)=|\nabla f|^2(-\Rc(E_i, E_i)+\text{sect}(E_i, JE_i)).\]
\end{lemma}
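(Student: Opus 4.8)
The plan is to derive both assertions directly from the formula $\nabla_X H_f = -J\circ \RR(X, J\nabla f)$ of Lemma \ref{covHf}, the only additional input being the Kähler symmetries of the curvature tensor. The key structural fact I would invoke throughout is that, since $J$ is parallel, each curvature operator commutes with $J$: applying $\RR(X,Y)$ to the parallel tensor $J$ yields $[\RR(X,Y), J]=0$, equivalently $\RR(X, Y, JW, Z) = -\RR(X, Y, W, JZ)$ after lowering an index.

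For the $J$-invariance, I would argue that $\RR(X, J\nabla f)$ commutes with $J$ by the relation above, and that $J$ trivially commutes with itself; hence the composition $-J\circ \RR(X, J\nabla f)=\nabla_X H_f$ commutes with $J$ as well (if $A J = J A$ then $(JA)J = J(AJ)=J(JA)$). This is exactly the claimed $J$-invariance.

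For the formula I would first pass to the $(2,0)$ picture. Using skew-symmetry $g(JV, Z)=-g(V, JZ)$, Lemma \ref{covHf} gives
\[(\nabla_X \He f)(Y, Z)=g(-J\,\RR(X, J\nabla f)Y,\, Z)=\RR(X, J\nabla f, Y, JZ).\]
Specializing to $X=\nabla f$, $Y=Z=E_i$ yields $(\nabla_{\nabla f}\He f)(E_i, E_i)=\RR(\nabla f, J\nabla f, E_i, JE_i)$. I would then fix a $J$-adapted orthonormal frame $e_1=\nabla f/|\nabla f|$, $e_2=Je_1$, $e_3=E_i$, $e_4=Je_3$; this is legitimate because $J$ preserves both $\mathrm{span}(\nabla f, J\nabla f)$ and its orthogonal complement, and it reduces the quantity to $|\nabla f|^2\,\RR(e_1, e_2, e_3, e_4)$.

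The heart of the argument — and the step requiring the most care — is converting the fully off-diagonal component $\RR(e_1, e_2, e_3, e_4)$ into sectional curvatures. Writing $R_{ijkl}:=\RR(e_i, e_j, e_k, e_l)$, I would apply the first Bianchi identity $R_{1234}+R_{2314}+R_{3124}=0$ and then use $\RR(\cdot,\cdot, JW, Z)=-\RR(\cdot,\cdot, W, JZ)$ together with $e_2=Je_1$, $e_4=Je_3$ to eliminate the $J$'s. Each of the last two terms then collapses to a genuine sectional-curvature term, giving $R_{2314}=\text{sect}(e_2, e_3)$ and $R_{3124}=\text{sect}(e_1, e_3)$, whence
\[\RR(e_1, e_2, e_3, e_4)=-\text{sect}(e_1, e_3)-\text{sect}(e_2, e_3).\]
Finally I would recombine the target: expanding $\Rc(E_i, E_i)=\text{sect}(e_1, e_3)+\text{sect}(e_2, e_3)+\text{sect}(e_3, e_4)$ in this frame shows that $-\Rc(E_i, E_i)+\text{sect}(E_i, JE_i)$ equals precisely $-\text{sect}(e_1, e_3)-\text{sect}(e_2, e_3)$, matching the curvature expression and closing the identity. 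The only delicate points are the sign bookkeeping in the curvature antisymmetries and in the $J$-commutation relation, so I would fix those conventions explicitly before the frame computation.
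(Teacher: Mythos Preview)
Your argument is correct and follows essentially the same route as the paper: both derive $(\nabla_{\nabla f}\He f)(E_i,E_i)=\RR(\nabla f,J\nabla f,E_i,JE_i)$ from Lemma~\ref{covHf}, apply the first Bianchi identity, use the K\"ahler curvature symmetry $\RR(\cdot,\cdot,JW,Z)=-\RR(\cdot,\cdot,W,JZ)$ to reduce the resulting terms to the two sectional curvatures $\text{sect}(E_i,\nabla f)$ and $\text{sect}(E_i,J\nabla f)$, and then repackage these via the Ricci decomposition. The only cosmetic difference is that you normalize to an orthonormal frame $\{e_1,\dots,e_4\}$ and track indices, whereas the paper keeps the vectors $\nabla f,J\nabla f,E_i,JE_i$ throughout.
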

\begin{proof}
	We have, by Lemma \ref{covHf},
	\begin{align*}
			(\nabla_{X}\He f)( JY, JZ) &=g((\nabla_X H_f) JY, JZ)\\
			&= g(-J\circ \RR(X, J\nabla f)JY, JZ)\\
			&= -\RR(X, \nabla f, JY, Z)=\RR(X, \nabla f, Y, JZ),\\
			&=(\nabla_{X}\He f)( Y, Z).
	\end{align*}
Using the first Bianchi's identity yields,
	\begin{align*}
		(\nabla_{\nabla f}\He f) (E_i, E_i) &= \RR(\nabla f, J\nabla f, E_i, JE_i)\\
		&= -\RR(J\nabla f, E_i, \nabla f, JE_i)-\RR(E_i, \nabla f, J\nabla f, JE_i)\\
		&=-\RR(E_i, J\nabla f, J\nabla f, E_i)-\RR(E_i, \nabla f, \nabla f, E_i).
		\end{align*}
	Thus, for unit vector field $E_i\perp \nabla f, J\nabla f$
	\begin{align*} (\nabla_{\nabla f}\He f) (E_i, E_i) &=-\Rc(\nabla f, \nabla f)+\text{sect}(J\nabla f, \nabla f),\\
		&=|\nabla f|^2(-\Rc(E_i, E_i)+\text{sect}(E_i, JE_i)).
	\end{align*}
\end{proof}

\section{Geodesics}
\label{sectiongeo}
In this section, on a KGRS $(M, g, J, \omega, f, \lambda)$, we consider $\gamma: I=[a, b]\mapsto M$, an integral curve of $\nabla f$ such that $\vec{0}\neq \nabla f \parallel \nabla \SS$ at each point on $\gamma$. This curve and the flow generated by $J\nabla f$ will form a two dimensional manifold of special interest. It will be crucial to our analysis of singularities in the next section. We start with simple observations.

\begin{lemma}
	\label{geodesic}
	$\gamma$ is a reparamertrized geodesic.
\end{lemma}
\begin{proof}
	From the soliton equations:
	\begin{align*}
		\Rc+ \He f &= \lambda g,\\
		\Rc(\nabla f) &= \frac{1}{2}\nabla \SS,
	\end{align*}
it follows that $\nabla f\parallel \nabla \SS$ implies $\nabla f$ is an eigenvector of $\He f$. Then it is readily verified that a reparametrization of $\gamma$ is a geodesic. 
	
\end{proof}
Lemma \ref{geodesic} allows one to reparametrize $\gamma$ by arc length $s$ such that $f\circ \gamma =\id$ and $|\gamma'(s)|=1$. Let $\phi_{\theta}(\cdot)$ denote the flow  generated by $J\nabla f$. For sufficiently small $\epsilon$ let $N$ be the iamge of the map $\phi: \gamma \times (-\epsilon, \epsilon)\mapsto M$ via the identification, for any $p\in \gamma$,
\[\phi(p, \theta)=\phi_{\theta}(p).\]

\begin{lemma}
	\label{totalgeo}
	$N$ is a totally geodesic submanifold.
\end{lemma}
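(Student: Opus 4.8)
The plan is to show that the second fundamental form of $N$ vanishes identically. Since this form is tensorial, it suffices to verify that $\nabla_X Y$ is tangent to $N$ whenever $X$ and $Y$ range over a pair of vector fields spanning $TN$ pointwise. The natural choice is $u:=\nabla f$ and $v:=J\nabla f$, which are orthogonal (because $g(\nabla f, J\nabla f)=\omega(\nabla f,\nabla f)=0$) and nonvanishing along $\gamma$, so together they are linearly independent and $\phi$ has rank two, making $N$ a genuine surface. I first record that both $u$ and $v$ are everywhere tangent to $N$. The field $v$ generates the flow $\phi_{\theta}$ defining $N$, so it is tangent by construction. For $u$, I use that $v=J\nabla f$ is Killing (Lemma \ref{killing1}) and that $f$ is constant along its flow, since $v(f)=g(\nabla f, J\nabla f)=0$; hence each $\phi_{\theta}$ is an isometry preserving $f$, so $d\phi_{\theta}(\nabla f)=\nabla f$ along $N$, and $\nabla f$ stays tangent to $N$.

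The key structural input is the eigenvector relation coming from $\nabla f \parallel \nabla \SS$. Along $\gamma$ this hypothesis, combined with the soliton identity $H_f(\nabla f)=\lambda\nabla f-\frac{1}{2}\nabla\SS$, forces $H_f(\nabla f)=\mu\,\nabla f$ for some function $\mu$. I then propagate this to all of $N$: because $\phi_{\theta}$ is an isometry fixing $f$, it preserves $\Rc$, $\SS$, $\nabla\SS$, and $\nabla f$, so the parallelism $\nabla f \parallel \nabla\SS$, and hence $H_f(\nabla f)=\mu\,\nabla f$, holds at every point of $N$. Since $H_f$ commutes with the parallel structure $J$, this also yields $H_f(v)=H_f(J\nabla f)=J H_f(\nabla f)=\mu\,v$.

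With these identities in hand the computation is immediate, using $\nabla_X\nabla f=H_f(X)$ and $\nabla J=0$:
\begin{align*}
\nabla_u u &= H_f(\nabla f)=\mu\,u, & \nabla_u v &= J\nabla_u\nabla f=J H_f(\nabla f)=\mu\,v,\\
\nabla_v u &= H_f(J\nabla f)=\mu\,v, & \nabla_v v &= J\nabla_v\nabla f=J H_f(J\nabla f)=-\mu\,u.
\end{align*}
Each of the four covariant derivatives lies in $\mathrm{span}(u,v)=TN$. Writing arbitrary tangent fields as $X=au+bv$ and $Y=cu+dv$ with functions $a,b,c,d$ on $N$, the Leibniz rule expands $\nabla_X Y$ into coefficient-derivative terms (which are again combinations of $u$ and $v$) plus the four terms above; all summands are tangent to $N$. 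Hence $\nabla_X Y$ is tangent to $N$, the second fundamental form vanishes, and $N$ is totally geodesic.

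The one non-local step, and the main point to be careful about, is the propagation of the eigenvector condition $H_f(\nabla f)=\mu\,\nabla f$ off the curve $\gamma$; every other ingredient is a pointwise algebraic identity. This is handled cleanly by the isometry invariance of $\phi_{\theta}$, which is precisely why it is essential that $J\nabla f$ be Killing and that $f$ be a first integral of its own Hamiltonian flow.
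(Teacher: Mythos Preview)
Your proof is correct and follows essentially the same approach as the paper: both arguments identify $TN=\mathrm{span}(\nabla f, J\nabla f)$, use that $J\nabla f$ is Killing and preserves $f$ to propagate the condition $\nabla f\parallel\nabla\SS$ (hence the eigenvector relation $H_f(\nabla f)=\mu\nabla f$) from $\gamma$ to all of $N$, and then verify directly that the four covariant derivatives $\nabla_{u}u$, $\nabla_{u}v$, $\nabla_{v}u$, $\nabla_{v}v$ remain in $TN$. Your write-up is in fact somewhat more explicit than the paper's, spelling out each derivative rather than simply asserting the orthogonality relations.
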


\begin{proof}
	First, the flows starting at different points on $\gamma$ of $J\nabla f$ are mutually distinct as they correspond to different level sets of $f$. Since the flows of $\nabla f$ and $J\nabla f$ commute (which is, in turn, because $H_f$ and $J$ commutes), they form a natural open chart for $N$. Thus, $N$ is a submanifold.   
	
	Next, as $J\nabla f$ is a Killing vector field preserving $f$, the relation $\nabla f \parallel \nabla \SS$ holds at each point on $N$. By the same argument as in Lemma \ref{geodesic}, $\nabla f$ and, thus, $J\nabla f$ are eigenvectors of $\He f$ with the same eigenvalue at each point on $N$. 
	
	Indeed, $TN$ is just the sub-bundle spanned by $\nabla f$ and $J\nabla f$. Let $V$ be a vector field perpendicular to $\nabla f$ and $J\nabla f$. We have
	\begin{align*}
		g(\nabla_{\nabla f} \nabla f, V)=g(\nabla_{J\nabla f} \nabla f, V)=g(\nabla_{\nabla f} J\nabla f, V)=g(\nabla_{J\nabla f} J\nabla f, V)=0.
	\end{align*}
	Thus, the second fundamental form is totally vanishing and the result follows. 
\end{proof}

Let $f':= \nabla_{\gamma'(s)}f=\frac{\partial f}{\partial s}$ and we denote higher derivatives analogously. It is well-defined since $\gamma$ is a geodesic and covariant derivatives agree with ordinary ones. Since $J(\nabla f)$ is a Killing vector field preserving $f$, $|\nabla f|$ is independent of $\theta$.

\begin{lemma}
	\label{curK}
	The induced metric $(N, g_N)$ is a warped product:
	\[ g_K = ds^2+ (f')^2 d\theta^2.\]
	Then, the only non-trivial sectional curvature is
	\begin{align*}
		\RR(\partial_\theta,\partial_s)\partial_s &=-\frac{f'''}{f'}\partial_{\theta}
	\end{align*}
\end{lemma}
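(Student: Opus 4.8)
The plan is to first pin down the metric coefficients of $g_N$ in the coordinate frame $\{\partial_s, \partial_\theta\}$, and then exploit the fact (Lemma \ref{totalgeo}) that $N$ is totally geodesic to reduce the curvature computation to a purely intrinsic one on a warped-product surface. For the metric, I identify $\partial_s$ as the velocity of the unit-speed geodesic (so $\partial_s = \gamma'$ along $\gamma$, and its $\phi_\theta$-translate elsewhere) and $\partial_\theta = J\nabla f$ as the generator of the flow; these commute, $[\partial_s,\partial_\theta]=0$, since the flows of $\nabla f$ and $J\nabla f$ commute, as already used in the proof of Lemma \ref{totalgeo}. Along $\gamma$ one computes directly $g(\partial_s,\partial_s)=|\gamma'|^2=1$, $g(\partial_\theta,\partial_\theta)=|J\nabla f|^2=|\nabla f|^2=(f')^2$ because $J$ is an isometry, and
\[
g(\partial_s,\partial_\theta)=\tfrac{1}{|\nabla f|}\,g(\nabla f, J\nabla f)=\tfrac{1}{|\nabla f|}\,\omega(\nabla f,\nabla f)=0
\]
by skew-symmetry of $\omega$. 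Since $\partial_\theta$ is Killing and commutes with $\partial_s$, each coefficient $g(\partial_a,\partial_b)$ is annihilated by $\partial_\theta$ and hence depends on $s$ alone; these values therefore propagate off $\gamma$, giving $g_N = ds^2 + (f')^2\,d\theta^2$.

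For the curvature I invoke Lemma \ref{totalgeo}: as $N$ is totally geodesic its second fundamental form vanishes, so the Gauss equation identifies the intrinsic curvature of $(N,g_N)$ with the restriction of the ambient $\RR$ to planes tangent to $N$. It then suffices to compute the Gauss curvature of the warped-product surface $ds^2+\rho(s)^2 d\theta^2$ with $\rho=f'$ a function of $s$ alone. A standard surface-of-revolution computation (nonzero Christoffel symbols $\Gamma^s_{\theta\theta}=-\rho\rho'$ and $\Gamma^\theta_{s\theta}=\rho'/\rho$) yields $K=-\rho''/\rho=-f'''/f'$. Finally, on any surface the curvature operator is determined by $K$ through $\RR(X,Y)Z=K\bigl(g(Y,Z)X-g(X,Z)Y\bigr)$; taking $X=\partial_\theta$, $Y=Z=\partial_s$ and using $g(\partial_s,\partial_s)=1$, $g(\partial_\theta,\partial_s)=0$ gives
\[
\RR(\partial_\theta,\partial_s)\partial_s = K\,\partial_\theta = -\frac{f'''}{f'}\,\partial_\theta,
\]
exactly the stated formula.

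The genuinely routine part is the warped-product curvature computation; the only care needed is sign bookkeeping, matching the paper's convention $\RR(X,Y)=[\nabla_X,\nabla_Y]-\nabla_{[X,Y]}$ together with its sectional-curvature normalization so that $K=-\rho''/\rho$ emerges with the correct sign (checked against the round metric $ds^2+\sin^2 s\,d\theta^2$, for which $K=+1$). The one conceptual step is the reduction to the intrinsic surface, namely that the vanishing of the second fundamental form lets one replace the ambient $\RR$ by the Gauss curvature of $g_N$; this is immediate from Lemma \ref{totalgeo}. Neither step is a real obstacle, so the substantive content of the lemma lies in the clean warped-product structure established in the first paragraph, from which the curvature formula follows mechanically.
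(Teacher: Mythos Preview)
Your proof is correct and follows essentially the same approach as the paper: identify $\partial_\theta$ with $J\nabla f$ to obtain the warped-product form of $g_N$, then use the totally geodesic property (Lemma~\ref{totalgeo}) to reduce the ambient curvature to the intrinsic Gauss curvature of a surface of revolution. The paper's own proof is terser---it simply records $|\partial_\theta|=|J\nabla f|=|f'|$ and defers the curvature formula to a standard reference---so your version supplies the details (orthogonality via skew-symmetry of $\omega$, $\theta$-independence via the Killing property, and the explicit $K=-\rho''/\rho$ with a sign check) that the paper leaves implicit.
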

\begin{proof}
	By Lemma \ref{totalgeo},  $(N, g_N)$ is a totally geodesic submanifold. Thus the ambient Killing vector field $J\nabla f$ is generating an one-parameter group of isometries on $(N, g_N)$. Since $\theta$ is the coordinate associated with the flow generated by $J\nabla f$, we observe that the pushforward of vector $\partial_{\theta}$ has length 
	\[|J\nabla f|=|f'|.\]
 	Thus, the first statement follows. The second follows immediately as the the sectional curvature computation for a warped product is well-known; see, for example, \cite{pe06book}. 
\end{proof}

\begin{remark}
	Since $K$ is totally geodesic, the above calculation also gives the ambient sectional curvature. The sectional curvature for other planes at a point on $N$ will depend on the Jacobi equation. It will be investigated elsewhere. 
\end{remark}

Let $T^\perp N$ denote the normal bundle to $N$. Since $\He f$ is $J$-invariant, for any orthonormal frame  $\{E_1, E_2\}$ of $T^{\perp} N$,
\[ \mu:= \He f( E_i, E_i).\]


\begin{lemma}
	\label{hessSonK} On $N$, there is an orthonormal frame $\{\gamma', J\gamma', E_1, E_2 \}$ diagonalizing both $\He f$ and $\He \SS$ such that
\begin{align*}
	\He f(\gamma', \gamma')=\He f(J\gamma', J\gamma') &= f'';\\
	\He \SS (\gamma', \gamma') &= 2\lambda f''-2(f'')^2-2 f' f''',\\
	\He \SS(J\gamma', J\gamma') &= 2\lambda f''-2(f'')^2,\\ 
	\He f (E_i, E_i) &= \mu;\\
	\He \SS (E_i, E_i) &= 2\lambda \mu-2\mu^2+2(f')^2\RR(JE_i, \gamma', \gamma', JE_i),\\ 
\end{align*}
\end{lemma}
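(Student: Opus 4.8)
The plan is to build the claimed frame by first diagonalizing $H_f$ and then showing the \emph{same} frame block-diagonalizes $H_\SS$. I would start from the reparametrization of Lemma \ref{geodesic}: since $\gamma$ is a unit-speed geodesic with $\nabla f = f'\gamma'$ and $f'=|\nabla f|$, a direct computation gives $H_f(\gamma')=\nabla_{\gamma'}\nabla f = f''\gamma'$, using $\nabla_{\gamma'}\gamma'=0$. Thus $\gamma'$ is an eigenvector of $H_f$ with eigenvalue $f''$, and because $H_f$ commutes with the parallel $J$, so is $J\gamma'$, giving $\He f(\gamma',\gamma')=\He f(J\gamma',J\gamma')=f''$. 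The normal plane $T^\perp N$ is $J$-invariant (since $TN=\mathrm{span}(\gamma',J\gamma')$ is), and $H_f$, being $g$-symmetric and preserving $TN$, preserves $T^\perp N$; on a $J$-invariant $2$-plane a $g$-symmetric operator commuting with $J$ must be a scalar multiple of the identity, so $H_f|_{T^\perp N}=\mu\,\mathrm{id}$, which both justifies the well-definedness of $\mu$ and reads $\He f(E_i,E_i)=\mu$ for \emph{any} orthonormal $E_1,E_2$ spanning $T^\perp N$.

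Next I would locate the eigenvectors of $H_\SS$. By Lemma \ref{totalgeo} (more precisely the isometry argument in its proof: $J\nabla f$ is Killing and preserves $f$, hence preserves $\SS$ and all curvature data), the relation $\nabla\SS\parallel\nabla f$ propagates from $\gamma$ to all of $N$; therefore $\nabla\SS=\SS'\gamma'$ on $N$ and $H_\SS(\gamma')=\nabla_{\gamma'}(\SS'\gamma')=\SS''\gamma'$, so $\gamma'$ is also an eigenvector of $H_\SS$. This already forces the $\gamma'$-off-diagonal entries of $\He\SS$ to vanish. For the remaining mixed entries I would invoke Proposition \ref{hessS}, which writes $\He\SS(X,Y)=2\lambda\He f(X,Y)-2\He^2 f(X,Y)-2\RR(X,J\nabla f,Y,J\nabla f)$; since $H_f$ and hence $H_f^2$ are already diagonal in the frame, every off-diagonal contribution collapses to the curvature term, and antisymmetry of $\RR$ in its first or last pair kills $\RR(J\gamma',J\gamma',E_i,J\gamma')$ and $\RR(E_i,J\gamma',J\gamma',J\gamma')$. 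Thus $\He\SS(J\gamma',E_i)=0$, which together with the previous step shows $H_\SS$ preserves both $\mathrm{span}(\gamma',J\gamma')$ and $T^\perp N$ and that $J\gamma'$ is an eigenvector too. Evaluating the diagonal entries with $J\nabla f=f'J\gamma'$ and substituting $\RR(\gamma',J\gamma',\gamma',J\gamma')=f'''/f'$ from Lemma \ref{curK} yields $\He\SS(\gamma',\gamma')=2\lambda f''-2(f'')^2-2f'f'''$ and $\He\SS(J\gamma',J\gamma')=2\lambda f''-2(f'')^2$ (its curvature term vanishing by antisymmetry).

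It remains to diagonalize the normal block. Since $H_\SS$ restricts to a $g$-symmetric endomorphism of the $2$-plane $T^\perp N$, the spectral theorem provides an orthonormal eigenbasis $\{E_1,E_2\}$; because $H_f$ acts there as the scalar $\mu$, this same basis also diagonalizes $H_f$, completing a frame $\{\gamma',J\gamma',E_1,E_2\}$ that simultaneously diagonalizes $\He f$ and $\He\SS$. For the stated value $\He\SS(E_i,E_i)$ I would again apply Proposition \ref{hessS}: the first two terms give $2\lambda\mu-2\mu^2$, and the curvature term $-2(f')^2\RR(E_i,J\gamma',E_i,J\gamma')$ is rewritten through the K\"ahler symmetry $\RR(JW,JX,JY,JZ)=\RR(W,X,Y,Z)$, which converts $\RR(E_i,J\gamma',E_i,J\gamma')$ into $\RR(JE_i,\gamma',JE_i,\gamma')=-\RR(JE_i,\gamma',\gamma',JE_i)$, producing exactly $+2(f')^2\RR(JE_i,\gamma',\gamma',JE_i)$.

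The main obstacle I anticipate is the bookkeeping that makes one frame serve both Hessians at once: $H_\SS$ is \emph{not} $J$-invariant (unlike $H_f$ and $\Rc$), so $J\gamma'$ is not automatically an $H_\SS$-eigenvector and must be extracted from the vanishing of the mixed curvature terms, while the normal block must be diagonalized by the spectral theorem rather than by a $J$-commuting shortcut. Correctly tracking the K\"ahler symmetries and antisymmetries of $\RR$ — in particular reconciling the $\RR(\cdot,J\nabla f,\cdot,J\nabla f)$ form of Proposition \ref{hessS} with the $\RR(JE_i,\gamma',\gamma',JE_i)$ form in the statement, and confirming that $\nabla\SS\parallel\nabla f$ genuinely holds throughout $N$ and not merely along $\gamma$ — is where the care lies; the remaining computations are routine.
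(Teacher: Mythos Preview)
Your proposal is correct and follows essentially the same route as the paper: apply Proposition~\ref{hessS}, use Lemma~\ref{curK} (and totally-geodesicness of $N$) to handle the $TN$ block, and diagonalize the symmetric curvature form $\RR(\cdot,\gamma',\gamma',\cdot)$ on $T^\perp N$ by the spectral theorem, noting that $H_f$ is already scalar there. Your extra step of observing $\nabla\SS=\SS'\gamma'$ directly to see $\gamma'$ is an $H_\SS$-eigenvector is a harmless embellishment; the paper instead reads all $TN$ entries straight off the rewritten curvature term $2(f')^2\RR(JX,\gamma',\gamma',JY)$ together with Lemma~\ref{curK}.
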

\begin{proof}
	By Lemma \ref{hessS}, 
\begin{align*}
	\He \SS (X, Y)&=2\lambda \He f(X, Y)-2\He^2 f(X, Y)-2 \RR(X, J\nabla f, Y, J\nabla f)\\
	&= 2\lambda \He f(X, Y)-2\He^2 f(X, Y)+2(f')^2 \RR(JX, \gamma', \gamma', JY).
\end{align*}
Lemma \ref{curK} allows one to compute $\RR(X, J\nabla f, Y, J\nabla f)$ for $X\in \text{span}\{\nabla f, J\nabla f\}=TN$. 

Next, one observes that, due to its algebraic properties, $\RR(\cdot, \gamma', \gamma', \cdot)$ is symmetric on $T^\perp N$. Thus, there is an orthonormal frame diagonalizing both $\He f$ and $\RR(\cdot, \gamma', \gamma', \cdot)$ simultaneously. The result then follows.

\end{proof}

\section{Analysis of Singularities}
\label{singularity}
In this section, we will consider $(M, g, J, f, \lambda)$, a KGRS in real dimension four. Accordingly, by Theorem \ref{main1}, $(M, \omega, f, \SS)$ is an integrable system. We will analyze singularities of this system and derive the proof of Theorem \ref{main2}. 

We first obtain a few observations for a general integrable system with a moment map:
\[\Phi=(f_1, f_2): M\mapsto \mathbb{R}^2.\]
It is always assumed that $\Phi$ is smooth. Recall that the set of singular points has a stratification. 
\[\mathcal{S}= \mathcal{S}_0 \cup \mathcal{S}_1.\]
Here $\mathcal{S}_i$ denotes the set of singular points of rank $i$ for $i=0, 1$. Let $V$ be some efficiently small neighborhood of a singular point $m$. The normal forms immediately imply the following:

\begin{lemma}
	\label{branchhyper}
	Let $m$ be a non-degenerate singular point in $\mathcal{S}$. 
	\begin{enumerate}[label=(\roman*)]
		\item Then the image $\Phi(V)$ is a disk unless it has an elliptic bloc. 
		\item If $m$ has an hyperbolic bloc, then there is an embedded line segment of critical values in the interior of $\Phi$.
		\item Let $m\in \mathcal{S}_0$, supposed that it is of type elliptic-elliptic, then it is connected to two branches of elliptic singular points $\mathcal{S}_1.$ 
	\end{enumerate}
	
\end{lemma}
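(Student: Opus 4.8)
The plan is to read off all three statements directly from the normal form theorem (Theorem \ref{normalform}), which provides local symplectic coordinates and explicit models $q_1,q_2$ for the components of the momentum map near $m$. The central observation is that, away from hyperbolic blocs, $\Phi=\Psi\circ(q_1,q_2)$ for a local diffeomorphism $\Psi$ of $(\mathbb{R}^2,0)$, so the local topology of the critical-value set and of $\Phi(V)$ is entirely determined by the model image of $(q_1,q_2)$. Thus I would fix $V$ small enough that the normal form holds and reduce every claim to a computation with the explicit quadratic models.

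For part (i), I would enumerate the possible types. For a rank-$0$ point the possibilities are elliptic-elliptic, elliptic-hyperbolic, hyperbolic-hyperbolic, and focus-focus; for rank-$1$ they are elliptic or hyperbolic. When there is no hyperbolic bloc (i.e.\ the elliptic-elliptic, focus-focus, or rank-$1$ elliptic cases), the momentum map is $\Psi\circ(q_1,q_2)$ and I would check that the image of $(q_1,q_2)$ on a neighborhood of the origin is a full neighborhood of $0$ in $\mathbb{R}^2$: for the elliptic bloc $q_i=(x_i^2+y_i^2)/2\ge 0$ sweeps out a quarter-plane near $0$, the focus-focus pair $(x_1y_2-x_2y_1,\,x_1y_1+x_2y_2)$ is locally surjective onto a neighborhood, and for rank $1$ the non-singular coordinate $q_1=y_1$ together with the elliptic $q_2$ covers a half-disk; applying the diffeomorphism $\Psi$ preserves the property of being a disk (topological $2$-cell). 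Hence the image is a disk precisely when no hyperbolic bloc is present, which is the contrapositive of the stated claim.

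For part (ii), in the presence of a hyperbolic bloc the relevant model coordinate is $q=x_iy_i$, whose critical set $\{x_i=y_i=0\}$ maps to the value $q=0$ while the rank along this locus drops; tracking the remaining (non-singular or elliptic) coordinate I would produce a one-parameter family of critical values forming an embedded line segment through $\Phi(m)$, and since $m$ is interior and $V$ is a full neighborhood, this segment lies in the interior of $\Phi$. For part (iii), at an elliptic-elliptic point the two elliptic blocs $q_1=(x_1^2+y_1^2)/2$ and $q_2=(x_2^2+y_2^2)/2$ each have a critical locus of rank $1$, namely $\{x_1=y_1=0\}$ and $\{x_2=y_2=0\}$; on each of these the corresponding $q_i$ vanishes while the other remains a positive elliptic coordinate, so each locus consists of rank-$1$ singular points of elliptic type, giving exactly the two branches of $\mathcal{S}_1$ emanating from $m$. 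I expect the main obstacle to be purely bookkeeping rather than conceptual: one must verify that the elliptic-type classification of the rank-$1$ branches is consistent with the rank-$0$ normal form (i.e.\ that restricting the elliptic-elliptic model to a rank-$1$ stratum reproduces the rank-$1$ elliptic normal form), and that the diffeomorphism $\Psi$, which may distort the models, does not alter the count of branches or the disk/segment dichotomy. Since $\Psi$ is a diffeomorphism it preserves both the number of connected branches of critical values and the local homeomorphism type of $\Phi(V)$, so these checks go through once the model computations are in place.
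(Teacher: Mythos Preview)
Your argument for parts (ii) and (iii) is essentially the paper's, but your treatment of part (i) contains a genuine misreading and a method that cannot reach the required cases. The statement asserts that $\Phi(V)$ is a full open neighborhood of $\Phi(m)$ \emph{whenever $m$ has no elliptic bloc}; the relevant cases are therefore hyperbolic--hyperbolic, focus--focus, and rank-$1$ hyperbolic. You instead analyze the cases with no hyperbolic bloc and conclude that ``the image is a disk precisely when no hyperbolic bloc is present,'' which is both the wrong direction and false: in the hyperbolic--hyperbolic model the image is a full neighborhood of $0$ (each $q_i=x_iy_i$ surjects onto a neighborhood of $0$), so a hyperbolic bloc does \emph{not} obstruct the disk property. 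Moreover, your tool---the diffeomorphism $\Psi$ with $\Phi=\Psi\circ(q_1,q_2)$ from Theorem~\ref{normalform}---is only available when there is \emph{no} hyperbolic bloc, so it cannot be invoked for the very cases part (i) is about. Finally, your parenthetical ``(topological $2$-cell)'' indicates a different reading of ``disk'' than the paper's; here ``disk'' means an open neighborhood of $\Phi(m)$ in $\mathbb{R}^2$, not merely a set homeomorphic to a $2$-cell (otherwise the distinction with half-disks and corners in part (iii), and the notion of ``interior'' in part (ii), would collapse).

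The paper's route for (i) avoids these issues: it observes that $\Phi(V)$ fails to be a full neighborhood iff some $f_i$ has a local extremum at $m$, i.e.\ $d^2 f_i$ is semidefinite there; it then writes $f_i=F_i(q_1,q_2)$ using only $\{f_i,q_j\}=0$ (no $\Psi$ needed) and checks, type by type, that semidefiniteness forces a degeneracy unless an elliptic bloc is present. This argument works uniformly across the hyperbolic cases. Your part (ii) also leans on ``$V$ is a full neighborhood'' to place the segment in the interior; note that this is exactly the content of part (i) applied at a nearby rank-$1$ hyperbolic point, so once (i) is fixed your (ii) goes through as in the paper.
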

\begin{proof}
	Recall that the Poisson bracket between two functions $\{F, H\}$ vanishes if and only if $F$ is constant on integral curves of $X_H$. By Theorem \ref{normalform}, there is a coordinate $(x_1, y_1, x_2, y_2)$ in a neighborhood of $m$ such that
	\begin{align*}
		\{f_i, q_j\} &=0.
	\end{align*}
	One obverses that each $f_i$ is invariant on each level set of each $q_j$. Thus, locally $f_i$ could be considered as a function of $(q_1, q_2)$. That is,
	\[ f_i=F_i (q_1, q_2).\]
	Then it is straightforward to compute the first and second derivative. For instance, we have
	\begin{align*}
		\frac{\partial f_i}{\partial x_j} &= \sum_k\frac{\partial F_i}{\partial q_k} \frac{\partial q_k}{\partial x_j}:= \sum_k F_{i,k} \delta_j^k q_{k, 1},\\
		\frac{\partial f_i}{\partial y_j} &= \sum_k\frac{\partial F_i}{\partial q_k} \frac{\partial q_k}{\partial y_j}:= \sum_k F_{i,k} \delta_j^k q_{k, 2}.
	\end{align*}
	Here, $\delta^k_j$ is the Delta notation reflecting the fact that $q_i$ is only a function of $x_i, y_i$. 
	The singularity condition means that, 
\[	\begin{cases}
		\text{for rank $1$, $df_1$ is a multiple of $df_2$ at the origin}\\
		\text{for rank $0$, $df_1=df_2=0$ at the origin.}\\
	\end{cases}
\]	
	(Part i) The image $\Phi(V)$ is not a disk if and only $m$ is a minimum or maximum point for at least one function $f_i$. Consequently, the Hessian of $f_i$ must be non-negative or non-positive. Comparing that observation with the non-degeneracy and the singularity condition for each type leads to the conclusion. 
	
	For instance, if $m$ is of type hyperbolic-hyperbolic then we have
	\begin{align*}
		\frac{\partial f_i}{\partial x_j} &=F_{i,j} y_j,\\
		\frac{\partial f_i}{\partial y_j} &= F_{i, j} x_j,\\
		\frac{\partial^2 f_i}{\partial^2 x_j} &=F_{i, jj} y_j^2,\\
		\frac{\partial^2 f_i}{\partial x_j\partial y_j} &=F_{i, jj} x_j y_j+ F_{i, j},\\
		\frac{\partial^2 f_i}{\partial^2 y_j} &=F_{i, jj} x_j^2.
	\end{align*}
	$m$ is a minimum or maximum for a function $f_i$ only if all eigenvalues of $d^2 f_i$ have the same sign. Given the form above, each $F_{i,j}= 0$, a contradiction to the non-degeneracy. Therefore, $\Phi(V)$ is a disk. \\
	
	(Part ii) Let's first consider $m\in \mathcal{S}_1$. Since non-degeneracy in this case is an open condition, there would be a continuous curve of singular values in $\Phi(V)$. Thus, the result follows from the previous part as $\Phi(V)$ is a disk.
	
	Next if $m\in \mathcal{S}_0$ then one observes that $m$ is connected to at least one branch of singularities of rank $1$ (in $\mathcal{S}_1$) such that the image of a small neighborhood of such a point is a disk. Thus, the result follows.\\
	
	(Part iii)  By the normal form described above, $\Phi(V)$ is a half-disk. Furthermore, its boundary is made of the images of $m$ and two small punctured disks in the $(x_1, y_1)$-plane ($x_2=y_2=0$) and the $(x_2, y_2)$ plane $(x_1=y_1=0$). Thus, each branch corresponds to a connected component of singularities in $\mathcal{S}_1$. Finally, one observes that each branch must be of elliptic type as the image if a small neighborhood around each point is not a disk and we recall part (i).

\end{proof}

\begin{remark}
	These observations are well-known to the experts; see, for example, \cite[Prop. 2.7]{VNS07} and \cite[Figure 7]{PRV15}. We provided a proof as these precise statements will be crucial for our analysis. 
\end{remark}

In case of a KGRS, thanks to the presence of the Riemannian metric, more can be said about the integrable system $(M, \omega, f, \SS)$. First, there is a simple observation.  
\begin{lemma}
	\label{lm1}
	A point $m$ is in $\mathcal{S}_0$ if and only if $\nabla f(m)=0$. Consequently, $m\in \mathcal{S}_1$ if and only if $\nabla f(m)\neq 0$ and $\nabla f(m)\parallel \nabla \SS(m)$. 
\end{lemma}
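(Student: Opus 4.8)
The plan is to reduce the statement entirely to the soliton identity (\ref{rcandf}), which expresses $\nabla\SS$ in terms of $\nabla f$. Recall that the moment map is $\Phi=(f,\SS)$, so by definition $m$ is singular exactly when $df(m)$ and $d\SS(m)$ are linearly dependent, and $\mathrm{rank}(d\Phi)(m)$ counts how many of these two covectors are independent. Passing to gradients via the metric, the rank is $0$ iff $\nabla f(m)=\nabla\SS(m)=0$; it is $1$ iff $\nabla f(m)$ and $\nabla\SS(m)$ are linearly dependent but not both zero; and it is $2$ iff they are linearly independent.

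First I would rewrite (\ref{rcandf}) as $\nabla\SS=2\,\Rc(\nabla f)=2\lambda\nabla f-2H_f(\nabla f)$, using the soliton equation $\Rc=\lambda g-H_f$. The decisive consequence is that $\nabla\SS$ is obtained by applying a fixed endomorphism to $\nabla f$, so $\nabla f(m)=0$ immediately forces $\nabla\SS(m)=0$. Hence $\nabla f(m)=0$ makes both differentials of $\Phi$ vanish, placing $m$ in $\mathcal{S}_0$. For the converse, $m\in\mathcal{S}_0$ means in particular $df(m)=0$, i.e. $\nabla f(m)=0$. This establishes the first equivalence.

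For the second statement I would simply combine the stratification $\mathcal{S}_1=\mathcal{S}\setminus\mathcal{S}_0$ with the first equivalence. A point is singular, $m\in\mathcal{S}$, precisely when $\nabla f(m)$ and $\nabla\SS(m)$ are linearly dependent, that is $\nabla f(m)\parallel\nabla\SS(m)$; and $m\notin\mathcal{S}_0$ is equivalent to $\nabla f(m)\neq 0$ by the first part. Therefore $m\in\mathcal{S}_1$ if and only if $\nabla f(m)\neq 0$ and $\nabla f(m)\parallel\nabla\SS(m)$, as claimed.

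I do not expect any real obstacle here, since all the content is carried by the single identity (\ref{rcandf}). The only point deserving minor care is the bookkeeping of the rank conditions for the pair $(\nabla f,\nabla\SS)$: in particular, noting that when $\nabla f(m)\neq 0$ the parallel condition allows $\nabla\SS(m)$ to be any scalar multiple of $\nabla f(m)$, including zero, and that such a configuration is genuinely rank $1$ rather than rank $0$.
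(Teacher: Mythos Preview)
Your proof is correct and follows essentially the same approach as the paper: both rely on the single identity $\Rc(\nabla f)=\tfrac{1}{2}\nabla\SS$ from (\ref{rcandf}) to conclude that $\nabla f(m)=0$ forces $\nabla\SS(m)=0$, with the rest being bookkeeping on the rank of $d\Phi$. Your write-up is simply more explicit about the rank accounting than the paper's terse version.
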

\begin{proof}
	One direction of the statement is obvious. The other follows from the fact that
	\[\Rc(\nabla f)=\frac{1}{2}\nabla \SS. \]
\end{proof}
\subsection{Analysis of $\mathcal{S}_0$}
The main goal of this section is to prove the following. 
\begin{theorem}
	\label{S0}
	Let $(M, g, J, \omega, f, \lambda)$ be a KGRS in real dimension four. With respect to the integrable system $(M, \omega, f, \SS)$, all non-degenerate singularities in $\mathcal{S}_0$ are isolated and of elliptic-elliptic type. If the system is non-degenerate then $f$ is Morse. 
\end{theorem}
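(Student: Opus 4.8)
The plan is to compute the linearizations of the Hamiltonian vector fields $X_f = J\nabla f$ and $X_\SS = J\nabla \SS$ at a point $m \in \mathcal{S}_0$ and to read off the singularity type directly from them. First I would record that by Lemma \ref{lm1} a rank-$0$ point is precisely a point where $\nabla f(m) = 0$; the soliton identity $\Rc(\nabla f) = \tfrac12 \nabla\SS$ then forces $\nabla\SS(m) = 0$ as well, so $m$ is simultaneously a critical point of $f$ and of $\SS$. At such a zero of the vector field, the linearization of $X_h$ (for $h = f$ or $h = \SS$) is the derivation $A_{X_h}$, which by the computation in Lemma \ref{AvsHess} equals $A_{X_h}Y = -\nabla_Y(J\nabla h) = -J H_h(Y)$. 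Thus the generators of the commutative algebra $K(m,\Phi)$ are $-J H_f$ and $-J H_\SS$.

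The key simplification is that at $m$ the curvature term in Proposition \ref{hessS} vanishes, since it is assembled from $J\nabla f$ and $\nabla f(m)=0$. Hence at $m$ one has the purely algebraic identity $H_\SS = 2\lambda H_f - 2 H_f^2$. In particular $H_\SS$ is a polynomial in $H_f$, so $H_f$ and $H_\SS$ are simultaneously diagonalizable, and like $H_f$ the operator $H_\SS$ commutes with $J$. I would then invoke the elementary observation that for any $g$-symmetric, $J$-invariant operator $H$, the symplectic operator $-JH$ has purely imaginary spectrum and is semisimple: since $g$ is Riemannian, $H$ is orthogonally diagonalizable, its eigenspaces are $J$-invariant $2$-planes on which $H$ acts as a scalar $c$, and there $-JH = -cJ$ has eigenvalues $\pm i c$. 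Applying this to every real combination $\alpha H_f + \beta H_\SS$ (again $g$-symmetric and $J$-invariant) shows that every element of the abelian algebra $K(m,\Phi)$ is semisimple with imaginary eigenvalues. This excludes hyperbolic and focus-focus roots, so matching against the list in Theorem \ref{typeofsing} the only possibility is the elliptic-elliptic Cartan class, with triple $(2,0,0)$.

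Next I would carry out the non-degeneracy bookkeeping. Writing the eigenvalues of $H_f$ as $a, a, b, b$ (paired by $J$-invariance), the eigenvalues of $H_\SS$ are $2\lambda a - 2a^2$ and $2\lambda b - 2b^2$. A regular element $-J(\alpha H_f + \beta H_\SS)$ exists in $K(m,\Phi)$ exactly when the vectors $(a,b)$ and $(2\lambda a - 2a^2,\, 2\lambda b - 2b^2)$ are linearly independent, and their $2\times 2$ determinant equals $2ab(a-b)$. Hence non-degeneracy of $m$ forces $a \neq 0$, $b \neq 0$ and $a \neq b$; in particular $\He f$ is non-degenerate at $m$, so $m$ is an isolated critical point of $f$ and therefore isolated in $\mathcal{S}_0 = \{\nabla f = 0\}$. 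Finally, if the whole system is non-degenerate then every point of the critical set $\mathcal{S}_0$ of $f$ carries a non-degenerate $\He f$, which is exactly the assertion that $f$ is Morse.

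The expected main obstacle is conceptual rather than computational: the curvature term in Proposition \ref{hessS} collapses at $m$ because $\nabla f(m)=0$, so the whole theorem reduces to the single algebraic identity $H_\SS = 2\lambda H_f - 2H_f^2$, after which ellipticity is automatic from $J$-invariance. The one point genuinely requiring care is verifying that the determinant $2ab(a-b)$ is precisely the obstruction to $K(m,\Phi)$ being two-dimensional with a regular element, so that the single hypothesis of non-degeneracy simultaneously certifies the elliptic-elliptic type, the non-degeneracy of $\He f$, and hence both the isolatedness in $\mathcal{S}_0$ and the Morse property of $f$.
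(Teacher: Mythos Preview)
Your argument is correct and follows essentially the same route as the paper: reduce $H_\SS$ at $m$ to the polynomial $2\lambda H_f - 2H_f^2$ via Proposition~\ref{hessS}, diagonalize, and read off both the non-degeneracy condition $ab(a-b)\neq 0$ and the elliptic-elliptic type. Your presentation is slightly more streamlined in two places: you deduce ellipticity from the general principle that $-JH$ has purely imaginary spectrum whenever $H$ is $g$-symmetric and $J$-invariant (the paper instead matches an explicit matrix against the table in Theorem~\ref{typeofsing}), and you obtain isolatedness directly from the non-degeneracy of $\He f$ rather than by the paper's contradiction argument through Kobayashi's fixed-point-set structure.
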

The proof is divided into several lemmas. 
\begin{lemma}
	\label{chardege0}
	Let $m\in \mathcal{S}_0$ for the integrable system $(M, \omega, f, \SS)$. It is degenerate if and only if $\He f$ is either a multiple of the identity or has $0$ as an eigenvalue. If it is non-degenerate then it is of type elliptic-elliptic. 
\end{lemma}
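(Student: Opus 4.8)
The plan is to reduce the entire question to the linear algebra of the single $g$-symmetric, $J$-commuting operator $H_f$ at $m$, exploiting that the curvature term in Proposition \ref{hessS} drops out the moment $\nabla f$ vanishes.

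First I would record the setup at $m$. Since $m\in\mathcal{S}_0$, Lemma \ref{lm1} gives $\nabla f(m)=0$, and then $\nabla\SS(m)=2\Rc(\nabla f)(m)=0$ as well; hence $f$ and $\SS$ are both critical at $m$ and their Hessians are intrinsically defined. The two commuting Hamiltonian fields $X_f=J\nabla f$ and $X_\SS=J\nabla\SS$ vanish at $m$, and by the computation of Lemma \ref{AvsHess} (which uses only that $J$ is parallel) their linearizations are $A_{X_f}=-J\circ H_f$ and $A_{X_\SS}=-J\circ H_\SS$. By definition $K(m,\Phi)$ is the commutative subalgebra of $\mathfrak{sp}(4,\mathbb{R})$ generated by these two operators, and $m$ is non-degenerate precisely when $K(m,\Phi)$ is Cartan, i.e. two-dimensional and containing an element with distinct eigenvalues.

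Next I would evaluate $H_\SS$ at $m$ via Proposition \ref{hessS}: because $\nabla f=0$ annihilates the curvature term, $H_\SS=2\lambda H_f-2H_f^2=2H_f(\lambda\,\id-H_f)$ there, a polynomial in $H_f$. As $H_f$ commutes with $J$ and is $g$-symmetric on a four-dimensional space, it has at most two distinct eigenvalues $a,b$, each eigenspace a $J$-invariant two-plane; I fix an orthonormal frame $\{e_1,Je_1,e_2,Je_2\}$ diagonalizing $H_f$. On the first plane $-J\circ H_f$ acts as $a$ times the rotation generator $-J$, with eigenvalues $\pm a i$, and as $\pm b i$ on the second; likewise $A_{X_\SS}$ has eigenvalues $\pm a' i,\pm b' i$ with $a'=2a(\lambda-a)$ and $b'=2b(\lambda-b)$. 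Thus every element of $K(m,\Phi)$ is a block rotation with purely imaginary spectrum, which by the classification in Theorem \ref{typeofsing} is exactly the mechanism that forces the elliptic-elliptic conclusion and excludes hyperbolic or focus-focus blocks. It remains to decide when $K(m,\Phi)$ is genuinely Cartan: in the chosen frame $A_{X_f}$ and $A_{X_\SS}$ correspond to the parameter vectors $(a,b)$ and $(a',b')$, so $K$ is two-dimensional precisely when these are independent, and
\[\det\begin{pmatrix} a & a' \\ b & b'\end{pmatrix}=ab'-a'b=2ab(a-b).\]
When this is nonzero, $K$ fills the full two-dimensional block-rotation Cartan subalgebra, which contains regular elements and is of elliptic-elliptic type; when it vanishes — exactly when $a=0$ or $b=0$ (that is, $0$ is an eigenvalue of $H_f$) or $a=b$ (that is, $H_f$ is a multiple of the identity) — the two linearizations are proportional, $K$ is at most one-dimensional, and $m$ is degenerate. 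This is the claimed dichotomy together with the elliptic-elliptic conclusion.

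I expect the main obstacle to be conceptual bookkeeping at the two endpoints rather than any hard estimate. On one side I must certify that a two-dimensional commutative subalgebra whose elements all have purely imaginary eigenvalues is actually Cartan of elliptic-elliptic type — not merely commutative — for which I would check that generic block rotations $\mathrm{diag}(R_p,R_q)$ with $p\neq\pm q$ and $p,q\neq0$ are regular and invoke Theorem \ref{typeofsing}. On the other side I must confirm that $2ab(a-b)=0$ really forces proportionality of $A_{X_f}$ and $A_{X_\SS}$ in all three degenerate cases, so that $K$ collapses to dimension at most one. The decisive simplification throughout is the cancellation of the curvature term in Proposition \ref{hessS}, which reduces the whole analysis to the scalar identity above.
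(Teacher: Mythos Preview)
Your argument is correct and follows essentially the same route as the paper: both evaluate Proposition~\ref{hessS} at $m$ to kill the curvature term and obtain $H_\SS$ as a polynomial in $H_f$, diagonalize simultaneously in a $J$-adapted frame, and then read off the Cartan condition and the elliptic-elliptic type from the resulting block-rotation form via Theorem~\ref{typeofsing}. Your determinant computation $ab'-a'b=2ab(a-b)$ is a slightly cleaner packaging of what the paper handles as two separate claims (one for each degenerate configuration), but the substance is identical.
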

\begin{proof}
	Since $m\in \mathcal{S}_0$, we have at this point $\nabla f=\nabla \SS=\vec{0}$. Consequently, $J\nabla f=\vec{0}$ at $m$. By Lemma \ref{hessS}, at $m$,
	\[\He \SS= 2\lambda \He f- \He^2 f.\] 
	Let $\{E_i\}_{i=1}^4$ be an orthonormal basis of $T_m M$ consisting of eigenvectors of $\He f$ such that
	\begin{align*}
		\He f(E_1, E_1) &= \He f(E_3, E_3)=\mu_1, \\
		\He f(E_2, E_2) &= \He f(E_4, E_4)=\mu_2.
	\end{align*}
	Thus, $\He \SS$ is diagonalized by the same basis and 
	\begin{align*}
		\He \SS(E_1, E_1) &= \He \SS(E_3, E_3)=2\lambda\mu_1-\mu_1^2,\\
		\He \SS(E_2, E_2) &= \He \SS(E_4, E_4)=2\lambda\mu_2-\mu_2^2.\end{align*}
	The results will follow from the following claims.\\
	
	\textbf{Claim 1:} If $\mu_1=\mu_2$ or $\mu_1=0\neq \mu_2$ then $m$ is degenerate. 
	
	\textit{Proof of Claim 1:} In this case, $\He f$ and $\He \SS$ are linearly dependent and the subalgebra generated will be of dimension one. Thus, it is degenerate.\\
	
	\textbf{Claim 2:} If $\{\mu_1, \mu_2, 0\}$ are mutual distinct then $m$ is non-degenerate of type elliptic-elliptic.
	
	\textit{Proof of Claim 2:} Recall that the matrix corresponding to $\omega$ is conventionally denoted
	\[\Omega=\begin{bmatrix}
		0 & 0 & 1 & 0\\
		0 & 0 & 0 & 1\\
		-1 & 0 & 0 & 0\\
		0 & -1 & 0 & 0
	\end{bmatrix}.\]
	A linear combination of $L=a\He f+b\He \SS$ is of the form 
	\begin{align*}
		L(E_1, E_1) &= \He f(E_3, E_3)=\mu_1(a+2b\lambda- b\mu_1), \\
		L(E_2, E_2) &= \He f(E_4, E_4)=\mu_2 (a+2b\lambda- b\mu_2).
	\end{align*}
	Since $\{\mu_1, \mu_2, 0\}$ are mutual distinct, $\He f$ and $\He S$ generate a two dimensional subalgebra. Furthermore, a generic element of that subalgebra, $\Omega^{-1}(a \He f+ b\He \SS)$, is of the form, for $A\neq B$,
	\[\begin{bmatrix}
		0 & 0 & -A & 0\\
		0 & 0 & 0 & -B\\
		A & 0 & 0 & 0\\
		0 & B & 0 & 0
	\end{bmatrix}.\]
	Checking the table in Theorem \ref{typeofsing} concludes that the singularity is of type elliptic-elliptic.   
\end{proof}

\begin{lemma}
	\label{S0isolated}
	Let $(M, g, J, \omega, f, \lambda)$ be a KGRS in real dimension four. With respect to the integrable system $(M, \omega, f, \SS)$, each non-degenerate point in $\mathcal{S}_0$ is isolated.
\end{lemma}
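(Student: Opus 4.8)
The plan is to reduce isolation to the invertibility of $\He f$ at such a point, which has essentially already been extracted in Lemma \ref{chardege0}. First I would record the correct reading of the statement: by Lemma \ref{lm1}, $\mathcal{S}_0$ coincides with the critical set $\{\nabla f = 0\}$ of the potential, so asserting that $m$ is isolated in $\mathcal{S}_0$ is the same as asserting that $m$ is an isolated critical point of $f$.

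Next I would invoke Lemma \ref{chardege0}: since $m \in \mathcal{S}_0$ is assumed non-degenerate, $\He f(m)$ is neither a multiple of the identity nor has $0$ as an eigenvalue. The relevant half here is the latter, namely $0 \notin \mathrm{Spec}(\He f(m))$, which says precisely that $\He f(m)$ is an invertible symmetric operator. In other words, $m$ is a non-degenerate critical point of $f$ in the classical Morse sense.

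From here there are two equivalent ways to finish, and I would take the shorter one. Since $f$ is Morse-Bott by Lemma \ref{fMorseBott}, the connected component $N$ of the critical set containing $m$ is a non-degenerate critical submanifold with $T_m N = \ker \He f(m)$. As $\He f(m)$ is invertible, $\ker \He f(m) = \{0\}$, so $N$ is zero-dimensional, i.e. $N = \{m\}$; hence $m$ is isolated in the critical set and therefore in $\mathcal{S}_0$. Alternatively, one can argue analytically: $\He f$ is the covariant differential of the gradient field $\nabla f$, so its invertibility at the zero $m$ makes $\nabla f$ a local diffeomorphism by the inverse function theorem, whence $m$ is the unique zero of $\nabla f$ in a neighborhood.

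The statement carries no serious obstacle; all the substance sits inside Lemma \ref{chardege0}. The only point demanding care is the identification ``$m$ non-degenerate singular point of the system $\Longleftrightarrow$ $\He f(m)$ invertible,'' which relies on the fact (used in Lemma \ref{chardege0}) that at a rank-$0$ point one has $\nabla f = \nabla \SS = \vec{0}$, hence $J\nabla f = \vec{0}$, so the curvature term in the formula of Proposition \ref{hessS} drops out and $\He \SS(m) = 2\lambda \He f(m) - 2\He^2 f(m)$ becomes a polynomial in $\He f(m)$; this is exactly what lets one read off the eigenvalue condition governing non-degeneracy and thereby conclude invertibility of $\He f(m)$.
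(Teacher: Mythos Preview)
Your proof is correct and follows essentially the same approach as the paper: both identify $\mathcal{S}_0$ with the critical set of $f$ via Lemma~\ref{lm1}, and both pivot on Lemma~\ref{chardege0} to conclude that non-degeneracy forces $0\notin\mathrm{Spec}(\He f(m))$. The only cosmetic difference is that the paper argues by contradiction (if $m$ were not isolated, the Kobayashi fixed-point structure would force a two-dimensional critical surface and hence a zero eigenvalue of $\He f$), whereas you argue directly from invertibility of $\He f(m)$ and the Morse--Bott property of Lemma~\ref{fMorseBott}; the underlying content is identical.
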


\begin{proof}
	We proceed by contradiction. By Lemma \ref{lm1}, $\mathcal{S}_0$ coincides with the singular set of $f$ which, in turn, coincides with the set of fixed points of isometries generated by $J\nabla f$. By \cite{kobay58}, each connected component is a totally geodesic submanifold of an even codimension. Thus, if $m$ is not an isolated point, then it belongs to a surface $\Sigma$ of dimension two on which $\nabla f_{\mid_\Sigma}=0$.  
	
	Thus, for vectors $X, Y$ tangential to $\Sigma$,
	\[ \He f(X, Y)= XY(f)-df(\nabla_X Y)=0.\]
	Since $\He f$ commutes with $J$, $m \in \Sigma$, $\He f$ has two eigenvalues $0$ and $\mu\neq 0$, each of multiplicity two. By Prop. \ref{fMorseBott}, $\mu_1\neq 0$. Lemma \ref{chardege0} then implies that $m$ is degenerate, a contradiction. 
	
\end{proof}
\begin{corollary}
	\label{fMorse}
		Let $(M, g, J, \omega, f, \lambda)$ be a KGRS in real dimension four. Supposed that $(M, \omega, f, \SS)$ is non-denegerate as an integrable system then $f$ is Morse. 
\end{corollary}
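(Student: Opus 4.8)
The plan is to obtain this corollary as a packaging of facts already in place, with essentially no new computation. Recall from the remark following Theorem \ref{main2} that a Morse function is by definition a Morse-Bott function whose singular set has only isolated points as connected components. Since Lemma \ref{fMorseBott} already establishes that $f$ is Morse-Bott, the entire burden reduces to showing that every critical point of $f$ is isolated.

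First I would identify the critical set of $f$ with $\mathcal{S}_0$: by Lemma \ref{lm1}, a point $m$ lies in $\mathcal{S}_0$ precisely when $\nabla f(m)=0$, so the set of critical points of $f$ is exactly $\mathcal{S}_0$. Next I would feed in the standing hypothesis that the system $(M,\omega,f,\SS)$ is non-degenerate. By definition this means each singular point of the system is non-degenerate, and in particular every point of $\mathcal{S}_0$ is a non-degenerate singularity. This is exactly the hypothesis required by Lemma \ref{S0isolated}, which then applies to each point of $\mathcal{S}_0$ and yields that every such point is isolated.

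Finally, combining the two observations, the critical set of $f$ consists of isolated points while $f$ is Morse-Bott; hence at each critical point $m$ one has $T_m N=\{0\}=\mathrm{Ker}(\He f_m)$, so $\He f_m$ is non-degenerate and $f$ is Morse. The substantive work has all been discharged earlier—in Lemma \ref{S0isolated}, which rests on Kobayashi's description of fixed-point sets of the isometries generated by $J\nabla f$ together with the degeneracy criterion of Lemma \ref{chardege0}. Accordingly I do not expect a genuine obstacle here; the only point demanding care is the purely definitional bookkeeping that "non-degeneracy of the system" is the same condition that triggers Lemma \ref{S0isolated}, after which the conclusion is immediate.
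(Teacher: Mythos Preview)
Your proposal is correct and follows essentially the same route as the paper: invoke Lemma \ref{fMorseBott} for Morse--Bott, identify the critical set of $f$ with $\mathcal{S}_0$ via Lemma \ref{lm1}, then apply Lemma \ref{S0isolated} under the non-degeneracy hypothesis to get isolated critical points. The paper's proof is just a terser version of exactly this argument.
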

\begin{proof}
	By Prop. \ref{fMorseBott}, $f$ is Morse-Bott. By Lemma \ref{S0isolated} and the non-degeneracy hypothesis, each critical point is isolated. Thus, the result follows.  
\end{proof}

\begin{proof}[Proof of Theorem \ref{S0}] It follows from Lemmas \ref{chardege0} and \ref{S0isolated} and Corollary \ref{fMorse}.  
\end{proof}

\subsection{Analysis of $\mathcal{S}_1$}
The main goal of this section is to prove the following.
	\begin{theorem}
		\label{S1elliptic}
		 Let $(M, g, J, \omega, f, \lambda)$ be a complete KGRS. Supposed that the integrable system is non-degenerate and $f$ is proper and bounded below or above, then rank 1 singularities are of elliptic type. 
	\end{theorem}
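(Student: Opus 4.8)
The plan is to argue by contradiction, following the strategy outlined in Step 3 of the introduction. Suppose there is a rank 1 singular point $m_0 \in \mathcal{S}_1$ that is of hyperbolic type. By Lemma \ref{branchhyper}(ii), this produces an embedded line segment $L$ of critical values in the interior of the image $\Phi(M)$. The first task is to show that this segment, consisting entirely of hyperbolic rank 1 singular values, can be extended maximally in both directions as a connected branch of hyperbolic singularities. Since non-degeneracy is an open condition along $\mathcal{S}_1$ (the hyperbolic block persists), the branch cannot simply terminate at another rank 1 point; it can only end by running into a rank 0 singularity or by escaping to infinity.

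The second task is to rule out the escape-to-infinity scenario using the properness of $f$ together with the completeness and the boundedness of $f$. Here I would use the conservation law \eqref{nablafandS} and the facts collected in Section \ref{preliminary}: when $\lambda \geq 0$ one has $\SS \geq 0$, and the relation $\SS + |\nabla f|^2 - 2\lambda f = \text{const}$ bounds the relevant quantities on the compact level sets of the proper function $f$. Because $\Phi = (f, \SS)$ maps into a region controlled by these bounds, a hyperbolic branch lying in the interior of $\Phi(M)$ cannot run off to the boundary indefinitely; the compactness of the level sets of $f$ forces the branch to accumulate, and analyticity of $g$ and $f$ should prevent pathological oscillation. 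Thus the maximally extended hyperbolic branch must terminate at a rank 0 point $m \in \mathcal{S}_0$.

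The third task is to derive the contradiction by a local counting argument at $m$. By Theorem \ref{S0}, every non-degenerate rank 0 singularity is of elliptic-elliptic type, so by Lemma \ref{branchhyper}(iii) the point $m$ is already connected to exactly two branches of elliptic rank 1 singularities. The hyperbolic branch we have constructed provides a third branch emanating from $m$. But by Step 2 (Corollary \ref{atmost2}, whose statement is that $m$ connects to at most two branches because $\He f$ commutes with $J$ and hence has at most two distinct eigenvalues, each determining at most one branch), a rank 0 point can be adjacent to at most two branches of rank 1 singularities. Three branches is a contradiction, so no hyperbolic rank 1 singularity can exist, and all rank 1 singularities are elliptic.

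The main obstacle I expect is the second task: rigorously controlling the global behavior of the hyperbolic branch and proving it must terminate at a rank 0 point rather than spiraling, self-accumulating, or drifting to the boundary of $\Phi(M)$. This requires combining the local structure of the critical-value set (from the normal form, Theorem \ref{normalform}, which guarantees the branch is a smooth embedded curve locally) with a genuinely global compactness input from properness of $f$, and likely the real-analyticity of the soliton structure to ensure the stratified singular set $\mathcal{S}$ is an analytic set with finitely many branches meeting each rank 0 point. Matching the endpoint of the hyperbolic branch precisely to a single point of $\mathcal{S}_0$ — rather than to a limit that lies in $\mathcal{S}_0$ only in a weak sense — is the delicate step that makes the clean three-branch contradiction of the third task go through.
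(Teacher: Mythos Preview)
Your outline is essentially the paper's proof: contradiction via a hyperbolic branch, extend until it hits $\mathcal{S}_0$, then invoke Theorem~\ref{S0} and Corollary~\ref{atmost2} to get a three-branch contradiction at an elliptic-elliptic point.

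The obstacle you flag in your ``second task'' is resolved in the paper more simply than you anticipate, and without analyticity or the conservation law. By Lemma~\ref{tangentspace} the non-degenerate part of $\mathcal{S}_1$ is a surface with tangent space $\mathrm{span}(\nabla f, J\nabla f)$, so the hyperbolic branch is swept out by integral curves of $\nabla f$; by Lemmas~\ref{geodesic} and~\ref{totalgeo} these are reparametrized geodesics lying in a totally geodesic surface. Extending the branch is therefore just following an integral curve of $\nabla f$, and since $f$ is proper and bounded on one side, that curve must accumulate at a critical point of $f$, which by Lemma~\ref{lm1} lies in $\mathcal{S}_0$ (Lemma~\ref{limitS0} then confirms the limit is in $\mathcal{S}$). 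No spiraling or self-accumulation can occur because you are flowing along $\nabla f$ and $f$ is monotone along the flow.
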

It is interesting to note that the proof will not rely on the algebraic classification but, rather, a somewhat global argument. First, one recalls that the non-degenerate part of $\mathcal{S}_1$ is a two-dimensional manifold. 

\begin{lemma}
	\label{tangentspace}
	The set of non-degenerate singular points in $\mathcal{S}_1$ is a two-dimensional manifold and the tangent space at each point is  $\text{Span}(\nabla f, J\nabla f)$.
\end{lemma}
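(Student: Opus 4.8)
The plan is to establish two things about the non-degenerate rank-one stratum $\mathcal{S}_1$: that it is a smooth two-dimensional submanifold, and that at each of its points the tangent space coincides with $\mathrm{Span}(\nabla f, J\nabla f)$. I would handle the manifold structure first, since it follows from the normal form machinery already recalled in Theorem \ref{normalform}. For a non-degenerate $m\in\mathcal{S}_1$, the relevant Cartan subalgebra lives in $\mathfrak{sp}(2,\mathbb{R})$, so locally $q_1=y_1$ is the non-singular direction and $q_2$ is either an elliptic or hyperbolic bloc. In these coordinates the singular set is precisely where the rank drops, which is the codimension-two locus $x_2=y_2=0$ in the elliptic case (and similarly a smooth locus in the hyperbolic case). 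Hence near $m$ the set $\mathcal{S}_1$ is cut out as a smooth two-dimensional submanifold, and non-degeneracy being an open condition ensures this holds on a neighborhood. I would phrase this via the normal form directly rather than re-deriving the implicit-function-theorem argument.

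Next I would identify the tangent space. The geometric characterization from Lemma \ref{lm1} is the key input: $m\in\mathcal{S}_1$ exactly when $\nabla f(m)\neq 0$ and $\nabla f(m)\parallel\nabla\SS(m)$. By Lemma \ref{geodesic} and the discussion in Section \ref{sectiongeo}, along such points $\nabla f$ is an eigenvector of $H_f$, and the flow of $J\nabla f$ preserves the condition $\nabla f\parallel\nabla\SS$ (since $J\nabla f$ is Killing and preserves $f$, hence preserves $\SS$ through the soliton identities). Therefore both $\nabla f$ and $J\nabla f$ stay tangent to $\mathcal{S}_1$, giving $\mathrm{Span}(\nabla f, J\nabla f)\subseteq T_m\mathcal{S}_1$. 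Since $\nabla f\neq 0$ and $J$ is an almost complex structure, these two vectors are linearly independent, so this span is already two-dimensional; combined with $\dim\mathcal{S}_1=2$ from the first part, the inclusion is an equality.

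I expect the main obstacle to be cleanly justifying that the span $\mathrm{Span}(\nabla f, J\nabla f)$ is actually tangent to the singular stratum rather than merely transverse structure attached to it. The cleanest route is to exhibit two curves through $m$ lying in $\mathcal{S}_1$ with these velocity vectors: the reparametrized geodesic $\gamma$ of Lemma \ref{geodesic} (an integral curve of $\nabla f$ along which $\nabla f\parallel\nabla\SS$), whose tangent is $\nabla f$, and the $J\nabla f$-flow line $\phi_\theta$, whose tangent is $J\nabla f$. Both curves consist of rank-one singular points by construction, so differentiating them at $m$ produces tangent vectors to $\mathcal{S}_1$. This realizes the totally geodesic surface $N$ built in Lemma \ref{totalgeo} as (an open piece of) the singular stratum itself, which both proves the tangent-space claim and connects the abstract normal-form description to the explicit geometric surface $N$ used later in the singularity analysis.

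\begin{proof}
Let $m\in\mathcal{S}_1$ be non-degenerate. By Theorem \ref{normalform}, there are local symplectic coordinates $\{x_1,x_2,y_1,y_2\}$ near $m$ in which $q_1=y_1$ and $q_2$ is an elliptic or hyperbolic bloc, and the moment map factors through $(q_1,q_2)$. The rank of $d\Phi$ drops below $2$ precisely on the locus $\{x_2=y_2=0\}$, which is a smooth two-dimensional submanifold; since non-degeneracy is an open condition, the non-degenerate part of $\mathcal{S}_1$ is a two-dimensional manifold near $m$.

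For the tangent space, recall from Lemma \ref{lm1} that $m\in\mathcal{S}_1$ if and only if $\nabla f(m)\neq 0$ and $\nabla f(m)\parallel\nabla\SS(m)$. By Lemma \ref{geodesic}, the integral curve $\gamma$ of $\nabla f$ through $m$ is a reparametrized geodesic along which this parallelism persists, so $\gamma$ lies in $\mathcal{S}_1$ and its velocity at $m$ is a multiple of $\nabla f(m)$. Likewise, the flow $\phi_\theta$ generated by $J\nabla f$ is a one-parameter group of isometries preserving $f$, hence preserving $\SS$ via the soliton identities; consequently $\phi_\theta$ preserves the condition $\nabla f\parallel\nabla\SS$, so each flow line through $m$ lies in $\mathcal{S}_1$ with velocity $J\nabla f(m)$ at $m$. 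Differentiating these two curves shows
\[
\mathrm{Span}\bigl(\nabla f(m), J\nabla f(m)\bigr)\subseteq T_m\mathcal{S}_1.
\]
Since $\nabla f(m)\neq 0$ and $J^2=-\id$, the vectors $\nabla f(m)$ and $J\nabla f(m)$ are linearly independent, so the left-hand span is two-dimensional. As $\dim\mathcal{S}_1=2$, the inclusion is an equality.
\end{proof}
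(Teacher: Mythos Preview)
Your argument for the $J\nabla f$ direction is fine: the Killing flow of $J\nabla f$ preserves $g$, $f$, and hence $\SS$, so it preserves the locus $\nabla f\parallel\nabla\SS$ and $J\nabla f$ is tangent to $\mathcal{S}_1$. The gap is in the $\nabla f$ direction. You invoke Lemma~\ref{geodesic} to assert that ``the integral curve $\gamma$ of $\nabla f$ through $m$ is a reparametrized geodesic along which this parallelism persists.'' But Lemma~\ref{geodesic} (and all of Section~\ref{sectiongeo}) takes as its \emph{hypothesis} that $\nabla f\parallel\nabla\SS$ at every point of $\gamma$; it does not prove that the condition propagates from a single point. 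So you are assuming exactly what you need to show. Unlike $J\nabla f$, the vector field $\nabla f$ is not Killing, and there is no a priori reason its flow preserves the eigenvector condition $H_f\nabla f\parallel\nabla f$; the covariant derivative $\nabla_{\nabla f}H_f=-J\,\RR(\nabla f,J\nabla f)$ from Lemma~\ref{covHf} has a curvature term that need not respect the eigenspace splitting. Indeed, the paper deduces the invariance of $\mathcal{S}_1$ under the $\nabla f$-flow \emph{from} Lemma~\ref{tangentspace} (see the proof of Lemma~\ref{fundlem}), not the other way around.

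The paper's proof avoids this circularity by working entirely in adapted Darboux coordinates with $f=x_1$: the singular locus is cut out by $\partial_{x_2}\SS=\partial_{y_2}\SS=0$, non-degeneracy makes the $(x_2,y_2)$-Hessian of $\SS$ invertible, and the implicit function theorem then identifies the tangent space directly with $\mathrm{Span}(\partial_{x_1},\partial_{y_1})=\mathrm{Span}(\nabla f,J\nabla f)$. If you want to keep your normal-form approach for the manifold structure, you still need an independent argument---either the implicit-function computation or a direct verification that $\nabla_{\nabla f}(\nabla f\wedge\nabla\SS)$ vanishes on $\mathcal{S}_1$---to pin down the second tangent direction.
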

\begin{proof}
	Let $m\in \mathcal{S}_1$ be non-degenerate. By Lemma \ref{lm1}, $\nabla f\neq 0$. One then chooses a Darboux coordinate $\{x_1, y_1, x_2, y_2\}$ in a neighborhood of $m$ such that 
	\[f=x_1,~~ J\nabla f\parallel \frac{\partial}{\partial y_1}.\]
	Then, since $\{f, \SS\}=0$, $\SS$ is a function of $x_1, x_2, y_2$. Thus, the singularity is characterized by the equation
	\begin{equation*}
	\frac{\partial \SS}{\partial x_2}=0=\frac{\partial \SS}{\partial y_2}.
	\end{equation*}
	The non-degeneracy is equivalent to the non-degeneracy of the following matrix
	\[\begin{bmatrix}
		\frac{\partial^2 \SS}{\partial^2 x_2} & \frac{\partial^2 \SS}{\partial x_2 \partial y_2}\\
		\frac{\partial^2 \SS}{\partial x_2 \partial y_2} & \frac{\partial^2 \SS}{\partial^2 \partial y_2}
	\end{bmatrix}.\] 
	We then apply the implicit function theorem for the map 
	\[F: M\mapsto \mathbb{R}^2, ~~ x\mapsto (\frac{\partial \SS}{\partial x_2}\mid_x, \frac{\partial \SS}{\partial y_2}\mid_x ). \]
	Thus, the singularity equation describes a locally smooth two-dimensional submanifold in $M$. In addition, the non-degeneracy is an open condition and, thus, each point in a small neighborhood of that submanifold is non-degenerate. Finally, the implicit function theorem implies that the tangent space at each point of the submanifold is spanned by the push-forwards of vector fields $\frac{\partial}{\partial x_1}$ and $\frac{\partial}{\partial y_1}$. Due to our choice of the Darboux coordinate, this is exactly the span of $\nabla f$ and $J\nabla f$. 
\end{proof}
\begin{remark} The argument above is an adaptation of \cite[Prop 1.16]{BF_book04}. 
\end{remark}

Furthermore, since the set of all singular points $\mathcal{S}$ is closed, any accumulation point of $\mathcal{S}_1$ must be in $\mathcal{S}$. Indeed, since the non-degenerate part of $\mathcal{S}_0$ consists of only elliptic-elliptic types, we have the following. Let $\overline{\mathcal{S}_1}$ be the closure of $\mathcal{S}_1$

\begin{lemma}
	\label{limitS0}
	Supposed that $(M, \omega, f, \SS)$ is non-degenerate then 
	\[\overline{\mathcal{S}_1}=\mathcal{S}= \mathcal{S}_0 \cup \mathcal{S}_1.\]
\end{lemma}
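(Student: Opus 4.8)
The plan is to prove the set equality $\overline{\mathcal{S}_1} = \mathcal{S} = \mathcal{S}_0 \cup \mathcal{S}_1$ by establishing two inclusions, the nontrivial one being $\mathcal{S}_0 \subseteq \overline{\mathcal{S}_1}$. The inclusion $\overline{\mathcal{S}_1} \subseteq \mathcal{S}$ is immediate: since the full singular set $\mathcal{S}$ is closed in $M$ and $\mathcal{S}_1 \subseteq \mathcal{S}$, any accumulation point of $\mathcal{S}_1$ lies in $\mathcal{S}$, so $\overline{\mathcal{S}_1} \subseteq \mathcal{S} = \mathcal{S}_0 \cup \mathcal{S}_1$. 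The reverse inclusion $\mathcal{S} \subseteq \overline{\mathcal{S}_1}$ clearly holds for points of $\mathcal{S}_1$ itself, so the entire content of the lemma is to show that every rank-$0$ singularity $m \in \mathcal{S}_0$ is a limit of rank-$1$ singularities, i.e. $\mathcal{S}_0 \subseteq \overline{\mathcal{S}_1}$.

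First I would fix $m \in \mathcal{S}_0$. Under the standing non-degeneracy hypothesis, Theorem \ref{S0} (via Lemma \ref{chardege0}) tells us that $m$ is isolated and of elliptic-elliptic type. The key structural fact to invoke is part (iii) of Lemma \ref{branchhyper}: an elliptic-elliptic point is connected to two branches of rank-$1$ singular points in $\mathcal{S}_1$. Concretely, in the normal-form coordinates $\{x_1, y_1, x_2, y_2\}$ of Theorem \ref{normalform}, where near $m$ the integrals take the form $q_1 = \tfrac{x_1^2 + y_1^2}{2}$ and $q_2 = \tfrac{x_2^2 + y_2^2}{2}$, the rank-$1$ stratum consists of the punctured coordinate planes $\{x_2 = y_2 = 0\} \setminus \{m\}$ and $\{x_1 = y_1 = 0\} \setminus \{m\}$. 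Along each of these punctured planes, one integral has nonvanishing differential while the other is critical, so these are genuine rank-$1$ points, and $m$ lies in their common closure. This exhibits sequences in $\mathcal{S}_1$ converging to $m$, giving $m \in \overline{\mathcal{S}_1}$.

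The step I expect to be the main obstacle is verifying carefully that the branch points produced by the normal form genuinely belong to $\mathcal{S}_1$ as rank-$1$ singularities of the \emph{global} system $(M, \omega, f, \SS)$, rather than merely being singularities of the linearized model $(q_1, q_2)$. Since $m$ is elliptic-elliptic, Theorem \ref{normalform} provides a local diffeomorphism $\Psi$ of $(\mathbb{R}^2, 0)$ with $\Phi = \Psi \circ (q_1, q_2)$, and because $\Psi$ is a diffeomorphism the rank of $d\Phi$ equals the rank of $d(q_1, q_2)$ pointwise; this is what transfers the rank-$1$ property of the coordinate-plane points to the actual moment map $\Phi$. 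One must check that $d(q_1,q_2)$ has rank exactly $1$ on the punctured planes (it drops to rank $1$ because, say on $\{x_2 = y_2 = 0\}$, we have $dq_2 = x_2\,dx_2 + y_2\,dy_2 = 0$ while $dq_1 = x_1\,dx_1 + y_1\,dy_1 \neq 0$ away from the origin), which is the routine verification underlying part (iii) of Lemma \ref{branchhyper}. Finally I would note that this local-to-global transfer, together with the isolatedness of $m$ from Lemma \ref{S0isolated}, shows $m$ is not an isolated point of $\mathcal{S}$ and indeed lies in $\overline{\mathcal{S}_1}$, completing the inclusion $\mathcal{S}_0 \subseteq \overline{\mathcal{S}_1}$ and hence the lemma.
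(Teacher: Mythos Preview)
Your proposal is correct and follows essentially the same approach as the paper: one inclusion from closedness of $\mathcal{S}$, the other by invoking Theorem \ref{S0} to conclude each $m\in\mathcal{S}_0$ is elliptic-elliptic and then Lemma \ref{branchhyper}(iii) to produce the two branches in $\mathcal{S}_1$ accumulating at $m$. Your additional normal-form discussion (verifying via $\Phi=\Psi\circ(q_1,q_2)$ that the rank of $d\Phi$ matches that of $d(q_1,q_2)$) is simply an unpacking of what the paper encapsulates in Lemma \ref{branchhyper}(iii).
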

\begin{proof}
	One direction follows from the fact that $\mathcal{S}$ is closed. For the other, consider a point $m\in \mathcal{S}_0$. By Theorem \ref{S0}, it is of type elliptic-elliptic. By Lemma \ref{branchhyper}, it is connected to at least two branches of elliptic singular points in $\mathcal{S}_1$. Thus, $m\in \overline{\mathcal{S}_1}$ and the statement follows. 
\end{proof}

\begin{lemma}
	\label{fundlem} Let $(M, g, J, \omega, f, \lambda)$ be a complete KGRS. Let $m\in \mathcal{S}_0$ and supposed that it is connected to a branch of non-degenerate singularities in $\mathcal{S}_1$. Then, there is a tangent vector $V$ at $T_mM$ such that 
	\begin{itemize}
		\item $V$ is an eigenvector of $\He f$ at $m$. 
		\item The two dimensional submanifold corresponding to the branch of singularities is the image of the exponential map of a punctured domain in the plane determined by $V, JV$. 
	\end{itemize} 
\end{lemma}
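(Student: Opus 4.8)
The plan is to identify the given branch of $\mathcal{S}_1$ singularities with one of the totally geodesic surfaces $N$ built in Section~\ref{sectiongeo}, and then to extract its behaviour near $m$ from the isometric flow of $J\nabla f$, which fixes $m$. First I would fix a point $p_0$ in the branch and let $\gamma$ be the integral curve of $\nabla f$ through $p_0$, reparametrized as a geodesic by Lemma~\ref{geodesic}; together with the flow $\phi_\theta$ of the Killing field $J\nabla f$ it sweeps out the totally geodesic surface $N$ of Lemma~\ref{totalgeo}, whose tangent distribution $\text{Span}(\nabla f, J\nabla f)$ agrees with that of the branch by Lemma~\ref{tangentspace}. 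Hence the branch coincides with $N$ near $p_0$. Since $m$ is connected to the branch, $m \in \overline{N}$; and because each $\phi_\theta$ is an isometry fixing $m$, we have $d\big(m, \phi_\theta(\gamma(s))\big) = d\big(m, \gamma(s)\big)$, so $\gamma$ itself must accumulate at $m$.

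Next I would extend $\gamma$ through $m$ and locate the vector $V$. As $m$ is a non-degenerate (Morse) critical point of $f$ by Theorem~\ref{S0}, the $\nabla f$-trajectory reaches $m$ with finite arc length, so the arc-length geodesic extends smoothly to $s=0$ with $\gamma(0)=m$ and unit velocity $V := \gamma'(0)$; by uniqueness of geodesics $\gamma(s)=\exp_m(sV)$ for small $s>0$. To see that $V$ is an eigenvector of $\He f$ at $m$, recall that on $\mathcal{S}_1$ the relation $\nabla f \parallel \nabla \SS$ forces $\nabla f$---hence the unit field $\gamma'$---to be an eigenvector of $H_f$ (exactly as in Lemma~\ref{geodesic}), with eigenvalue $f''$. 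Passing to the limit $s\to 0$ and using continuity of $H_f$ gives $H_f|_m V = \mu V$, where $\mu\in\{\mu_1,\mu_2\}$ is one of the eigenvalues of $H_f$ at $m$, both of which are nonzero by Lemma~\ref{chardege0}. This gives the first bullet and pins down the plane $P := \text{Span}(V, JV)$ as the $\mu$-eigenspace of $H_f$ at $m$.

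Finally I would describe $N$ through the exponential map. By Lemma~\ref{AvsHess} the linearization of the Killing field $J\nabla f$ at its zero $m$ is $A_{J\nabla f}|_m = -J\circ H_f|_m$, which on $P$ equals $-\mu J$; hence $d\phi_\theta|_m = \exp(\theta A_{J\nabla f}|_m)$ restricts on $P$ to the rotation $\cos(\mu\theta)\,\id - \sin(\mu\theta)\,J$, sweeping the entire unit circle of $P$ as $\theta$ ranges over $\mathbb{R}$, since $\mu\neq 0$. Using that isometries commute with the exponential map,
\[ \phi_\theta(\gamma(s)) = \phi_\theta\big(\exp_m(sV)\big) = \exp_m\big(s\, d\phi_\theta|_m V\big), \]
and as $(s,\theta)$ vary the argument fills a punctured disk in $P$. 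Therefore $N = \exp_m\big(\{\,w\in P : 0<|w|<\epsilon\,\}\big)$, which is the second bullet.

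The step I expect to be the main obstacle is the limiting argument: rigorously showing that the $\nabla f$-trajectory reaches the Morse point $m$ in finite arc length with a well-defined direction $V$, and that the eigenvector relation for $H_f$ persists in the limit. Once $V$ and the plane $P$ are in hand, the description of $N$ follows cleanly from the rotational normal form of the Killing flow at $m$ and the commutation of isometries with $\exp_m$.
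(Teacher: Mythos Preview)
Your proof is correct and follows essentially the same route as the paper's: identify the branch with the totally geodesic surface $N$ from Section~\ref{sectiongeo}, extend a $\nabla f$-integral geodesic through $m$ to produce $V=\gamma'|_m$, pass the eigenvector relation for $H_f$ to the limit by continuity, and recover the rest of $N$ from the commutation of the isometries $\phi_\theta$ with $\exp_m$. Your explicit computation of $d\phi_\theta|_m$ via Lemma~\ref{AvsHess} and the resulting rotation on $P$ supplies more detail than the paper, which simply invokes that commutation; one small caveat is that your appeal to Theorem~\ref{S0} to get $m$ Morse (and hence $\mu\neq 0$) assumes non-degeneracy of $m$, which is not literally among the hypotheses of the lemma, though it holds in every application.
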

\begin{proof}
	By Lemma \ref{tangentspace}, the branch of non-degenerate singularities in $\mathcal{S}_1$ corresponds to a connected component of a two dimensional submanifold in $\mathcal{S}_1$. Denote this component by $N$. Then, at each point in $N$, 
	\[0\neq \nabla f \parallel \nabla \SS.\]
	
	Furthermore, since the tangent space of $N$ is spanned by $\nabla f$ and $J\nabla f$ by Lemma \ref{tangentspace}, each integral curve of $\nabla f$ starting at a point in $N$ stays in $N$. By Lemma \ref{geodesic}, each is a reparamertrized geodesic segment and the induced metric on $K$ has a warped product structure.
	
	Thus, $m$ is connected to $N$ only if it is an accumulation point for each of these geodesic segment. Denote an arbitrary geodesic segment by $\gamma$. Since the manifold is geodesically complete and $N$ is totally geodesic by Lemma \ref{totalgeo}, $\gamma$ can be extended through $m$. Lemma \ref{hessSonK} implies that $\gamma'$ is an eigenvector of $\He f$ at each point on $N$. By continuity, the same property holds at $m$. Denote 
	\[V:= \gamma'\mid_m.\]
	Since the extension of $\gamma$ is a geodesic, it is the image of the exponential map of $tV$ for parameter $t$ in an open interval of the real line containing $0$. 
	
	Furthermore, $J\nabla f$ is a Killing vector field generating an one-parameter group of isometries. As an isometry commutes with the exponential map, for any $U$ in a small neighborhood of the origin on the plane $\text{span}\{V, JV\}$, its image $p$ under the exponential map shares the same property that  
	\[0\neq \nabla f \parallel \nabla \SS.\]
	As a non-degenerate connected component of $\mathcal{S}_1$ is totally determined by a point and the flows of $\nabla f$ and $J\nabla f$, the image of the exponential map of a small punctured domain in the plane determined by $V, JV$ coincides with $N$. Extending this domain as much as possible gives the conclusion.  
	
\end{proof}

\begin{corollary}
	\label{atmost2}
	Let $m\in \mathcal{S}_0$ be non-degenerate. Then it connects to at most two branches of singularities in $\mathcal{S}_1$.
\end{corollary}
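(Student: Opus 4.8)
The plan is to combine Lemma~\ref{fundlem} with the $J$-invariance of $\He f$ and then simply count eigenspaces. By Lemma~\ref{fundlem}, every branch of non-degenerate singularities $N \subset \mathcal{S}_1$ connected to $m$ determines an eigenvector $V \in T_m M$ of $\He f$, with $N$ recovered as the exponential image of a punctured domain in the plane $\text{span}\{V, JV\}$. The key point will be that this plane is forced to coincide with a full eigenspace of $\He f$ at $m$, after which the bound follows from the scarcity of such eigenspaces in real dimension four.

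First I would verify that $\text{span}\{V, JV\}$ is an entire eigenspace. Since $H_f$ commutes with the parallel structure $J$, the relation $H_f V = \mu V$ gives $H_f(JV) = J(H_f V) = \mu\, JV$, so $JV$ is an eigenvector for the same eigenvalue $\mu$; and $V, JV$ are linearly independent because $J^2 = -\id$ precludes $JV = cV$ with $c$ real. As $m$ is non-degenerate, Theorem~\ref{S0} (via Lemma~\ref{chardege0}) shows it is of elliptic-elliptic type, so $\He f$ has exactly two distinct eigenvalues $\mu_1 \ne \mu_2$, each of multiplicity two. Thus each eigenspace is a $J$-invariant $2$-plane, and $\text{span}\{V, JV\}$ must equal the eigenspace attached to the eigenvalue $\mu$ of $V$.

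It then remains to see that distinct branches correspond to distinct eigenspaces, which I expect to be the only (minor) subtlety. This is precisely the content of the uniqueness in Lemma~\ref{fundlem}: a non-degenerate component of $\mathcal{S}_1$ is completely determined by its base point together with the commuting flows of $\nabla f$ and $J\nabla f$, that is, by the exponential image of its $J$-invariant defining plane. Hence two branches sharing the same eigenspace would share the same defining plane and therefore coincide. Since $\He f$ possesses only the two eigenspaces associated to $\mu_1$ and $\mu_2$, there are at most two admissible defining planes, and consequently at most two branches of $\mathcal{S}_1$ emanate from $m$.
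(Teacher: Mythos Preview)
Your argument is correct and follows essentially the same approach as the paper. Both use Lemma~\ref{fundlem} to associate each branch to a $J$-invariant eigenplane of $\He f$ at $m$, then invoke non-degeneracy (via Lemma~\ref{chardege0}) to limit the number of such planes to two; the only cosmetic difference is that you count directly while the paper argues by contradiction, assuming three branches and using pigeonhole to force $\He f$ to be a multiple of the identity.
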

\begin{proof}
	The proof is by contradiction. Suppose $m$ is connected to three different branches of singularities in $\mathcal{S}_1$. By Lemma \ref{fundlem}, each is the image of the exponential map of a domain in a plane determined by $V_i, JV_i$ for $i=1, 2, 3$. 
	
	Since they are different branches, $V_i\notin \text{span}\{V_j, JV_j\}$ for $i\neq j$. Since each $V_i$ and $JV_i$ are eigenvectors of $\He f$ and $\He f$ has two eigenvalues, each of multiplicity two, there exists, without loss of generality,
	\[\He V_1= \mu V_1 \text{   and  } \He V_2=\mu V_2.\] 
	Since $\{V_1, JV_1, V_2, JV_2\}$ forms a basis of $T_pM$, $\He f$ is a multiple of the identity. By Lemma \ref{chardege0}, $m$ is degenerate, a contradiction.

\end{proof}

\begin{proof}[Proof of Theorem \ref{S1elliptic}]
	The proof is by contradiction. Let $p\in \mathcal{S}_1$. Supposed that the conclusion fails then, $p$ has an hyperbolic bloc. By Lemma \ref{branchhyper}, there is an embedded line segment of critical values in the in the interior of the image of $\Phi(U)$ for a small neighborhood $U$ of $m$. 
	
	Furthermore, the branch is detected by the property that, at each point, 
		\[0\neq \nabla f \parallel \nabla \SS.\]
	Thus, the corresponding two-dimensional submanifold is totally geodesic and has a warped product structure by Lemmas \ref{totalgeo} and \ref{curK}. In particular, the embedded line segment corresponds to the image, via $\Phi$, of an integral curve of $f$. Thus, we can extend this segment as long as $\nabla f\neq 0$ since the system has no degenerate singular points. 
	
	Since $f$ is proper and bounded below or above, by compactness, there must be a limit point $m$ which is in $\mathcal{S}_0$ by Lemma \ref{limitS0}. By Theorem \ref{S0}, $m$ is of type elliptic-elliptic. By Lemma \ref{branchhyper}, $m$ is connected to two branches of elliptic singularities in $\mathcal{S}_1$. In addition to the hyperbolic branch, $m$ is connected to three different branches of singularities, a contradiction to Lemma \ref{atmost2}.

\end{proof}

\subsection{Proof of Main Theorem}
We follow the strategy of \cite{VNS07, PV09, PRV15}. 

First, we describe the image of the moment map. 
\begin{lemma}
	\label{imagemomentmap}
	Let $(M, g, J, f, \lambda)$ be a KGRS in real dimension four. Assume $f$ is proper. We have the following:
	\begin{enumerate}
		\item The functions $\SS^{+}(x):= \max_{f^{-1}(x)} \SS$, $\SS^{-}(x):= \min_{f^{-1}(x)} \SS$ are continuous;
		\item The image $B=\Phi(M)$ is the domain defined by 
		\[B=\{(x, y)\in \mathbb{R}^2, f_{min}\leq x \leq f_{max} \text{ and } \SS^-(x)\leq  y\leq \SS^+(x)\}.\]
		Consequently, $B$ is simply connected. 
	\end{enumerate}
\end{lemma}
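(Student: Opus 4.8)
The plan is to reduce the whole statement to two elementary properties of the proper map $f$: that its fibers are compact, and — crucially — that they are connected. First I would record that properness makes each level set $f^{-1}(x)$ compact, so the continuous function $\SS$ attains a maximum and a minimum there; hence $\SS^\pm$ are well defined and, since $M$ is connected, $f(M)$ is a (possibly unbounded) interval with endpoints $f_{min}=\inf f$ and $f_{max}=\sup f$, each attained when finite by a subsequence argument. By Corollary \ref{connectedlevel} each $f^{-1}(x)$ is connected, so $\SS\big(f^{-1}(x)\big)$ is a connected compact subset of $\mathbb{R}$, i.e. exactly the interval $[\SS^-(x),\SS^+(x)]$. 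Since $B=\Phi(M)$ is the disjoint union of these vertical slices $\{x\}\times[\SS^-(x),\SS^+(x)]$ over $x\in f(M)$, the set-theoretic description of $B$ in part (2) is then immediate; the remaining content is the continuity of $\SS^\pm$ (part 1) and the simple connectivity of $B$.

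For part (1) I would first dispatch the two ``easy'' semicontinuities by a subsequence/compactness argument: if $x_n\to x$ and $p_n\in f^{-1}(x_n)$ realizes $\SS^+(x_n)$, then the $p_n$ lie in the compact set $f^{-1}(\{x_n\}\cup\{x\})$, so a subsequence converges to some $p\in f^{-1}(x)$ with $\SS(p)=\limsup\SS^+(x_n)\le\SS^+(x)$; this gives upper semicontinuity of $\SS^+$, and symmetrically lower semicontinuity of $\SS^-$.

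The hard direction, and the main obstacle, is the reverse: lower semicontinuity of $\SS^+$ (and, symmetrically, upper semicontinuity of $\SS^-$). Here I would show that for any $c<\SS^+(x_0)$ the $f$-image of the open set $\{\SS>c\}$ contains a neighborhood of $x_0$ in $f(M)$. Pick $p\in f^{-1}(x_0)$ with $\SS(p)>c$. If $\nabla f(p)\neq 0$ then $f$ is a submersion, hence open, near $p$, and we are done. If $\nabla f(p)=0$ I would invoke the Morse--Bott structure of $f$ (Lemma \ref{fMorseBott}): near the critical submanifold $f$ equals its critical value plus a nondegenerate quadratic in the normal directions. When $p$ is a genuine saddle (index and coindex both positive, which is compatible with the index being even) this quadratic takes every value near $x_0$ arbitrarily close to $p$, so $\{\SS>c\}$ maps onto a full neighborhood of $x_0$; when $p$ is a local extremum of $f$, Corollary \ref{connectedlevel} forces it to be global, so $x_0\in\{f_{min},f_{max}\}$ is an endpoint and only a one-sided neighborhood is needed, again supplied by the (semidefinite) normal quadratic. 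This case analysis is the delicate step, since it is where the Morse--Bott geometry of $f$, rather than soft point-set topology, enters.

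Finally, granting continuity of $\SS^\pm$, I would deduce simple connectivity of $B$ by exhibiting an explicit deformation retraction onto the graph of $\SS^-$. The vertical straight-line homotopy
\[H\big((x,y),s\big)=\big(x,\,(1-s)y+s\,\SS^-(x)\big)\]
is continuous because $\SS^-$ is, remains inside $B$ since each slice is an interval, and at $s=1$ lands on $\{(x,\SS^-(x)):x\in f(M)\}$, which projects homeomorphically onto the interval $f(M)$ and is therefore contractible. Hence $B$ is contractible, and in particular simply connected.
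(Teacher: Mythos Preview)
Your argument is correct, but it differs from the paper's in how continuity of $\SS^\pm$ at critical values of $f$ is handled. The paper invokes the soliton identity $\nabla\SS=2\,\Rc(\nabla f)$ (Lemma~\ref{lm1} in effect) to observe that every critical component of $f$ also has $\nabla\SS=0$, hence constant $\SS$; so passing through a critical level cannot introduce a vertical jump in the image of $\Phi$, and continuity follows essentially from the Morse-theoretic diffeomorphism of nearby regular fibers. You instead bypass this soliton-specific fact and argue directly from the Morse--Bott local model for $f$ (Lemma~\ref{fMorseBott}) together with the ``local extremum is global'' consequence of Corollary~\ref{connectedlevel}, showing that $f$ restricted to $\{\SS>c\}$ is open onto $f(M)$ near any critical value. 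Your route is longer but more robust: it would apply to any proper Morse--Bott function with connected fibers and even indices, not just the soliton potential paired with $\SS$. The paper's route is shorter and cleaner in this particular setting, precisely because the second integral $\SS$ is tied to $f$ through the soliton equations. Your explicit vertical deformation retraction onto the graph of $\SS^-$ makes the simple connectivity of $B$ fully explicit, whereas the paper simply asserts it as a consequence of the description of $B$.
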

\begin{proof}
	By Morse theory, nearby regular level sets of $f$ are diffeomorphic to each other. Thus, a discontinuity of $\SS^{\pm}$ could only appear at a critical value of $f$. However, if $\nabla f =0$ on a connected component, then so is $\nabla \SS$ via the soliton equation $\Rc(\nabla f)=\frac{1}{2}\nabla \SS$. Consequently, $\SS$ is constant on such a component and there is no vertical segment in the image of the moment map. That proves the first statement. 
	
	Since $f$ is proper, Lemma \ref{connectedlevel} implies that each fiber $f^{-1}(x)$ is compact and connected. Thus, $\SS^{\pm}$(x) is finite. The result then follows from the previous one.   
\end{proof}

\begin{lemma}
	\label{connectefiberPhi}
	Let $(M, g, J, f, \lambda)$ be a KGRS in real dimension four. Supposed that the integrable Hamiltonian system $(M, \omega, f, \SS)$ is non-degenerate and $f$ is proper and bounded below or above. Then each fiber of the momentum map is connected.
\end{lemma}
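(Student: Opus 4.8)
The plan is to reduce the connectedness of the fibers of $\Phi=(f,\SS)$ to a Morse--Bott connectedness statement for $\SS$ restricted to the level sets of $f$, which is exactly the tool already deployed for $f$ itself in Corollary~\ref{connectedlevel}. Indeed, for $(x,y)\in B$ the fiber is
\[ \Phi^{-1}(x,y)=f^{-1}(x)\cap \SS^{-1}(y)=\{\, p\in f^{-1}(x): \SS(p)=y \,\}, \]
which is precisely a level set of $\SS\mid_{f^{-1}(x)}$. So it suffices to show that $\SS$, restricted to each level set of $f$, has connected level sets. Note also that $\Phi$ is proper: $\Phi^{-1}(K)$ is a closed subset of $f^{-1}(\pi_1(K))$, which is compact since $f$ is proper.

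First I would treat a regular value $x$ of $f$. Then $N_x:=f^{-1}(x)$ is a compact $3$-manifold, and it is connected by Corollary~\ref{connectedlevel}. The critical points of $\SS\mid_{N_x}$ are exactly the points of $\mathcal{S}_1\cap N_x$, where $0\neq\nabla f\parallel\nabla\SS$. By the reformulation of non-degeneracy recorded in Section~\ref{preliminary} (for a non-degenerate system $\SS$ is Morse--Bott on each level set of $f$), $\SS\mid_{N_x}$ is Morse--Bott; since $\SS$ is invariant under the flow of the Killing field $J\nabla f=X_f$, its critical submanifolds are the $J\nabla f$-orbits, which by Lemma~\ref{tangentspace} are compact $1$-manifolds (circles) in $N_x$. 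By Theorem~\ref{S1elliptic} each such singularity is elliptic, so the Hessian of $\SS\mid_{N_x}$ transverse to the critical circle is definite; hence every critical submanifold has index and coindex in $\{0,2\}$, in particular different from $1$. The connectedness theorem for proper Morse--Bott functions with no index/coindex-$1$ critical submanifolds (\cite[Prop.~4.5]{PRV15}, already invoked in Corollary~\ref{connectedlevel}) then shows that the level sets of $\SS\mid_{N_x}$, i.e. the fibers $\Phi^{-1}(x,y)$, are connected. This settles the claim for every regular value $x$ of $f$.

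The remaining case is $x_0$ a critical value of $f$, coming from the rank-$0$ points of $f^{-1}(x_0)$, which are isolated by Theorem~\ref{S0}. Here $f^{-1}(x_0)$ is still connected by Corollary~\ref{connectedlevel} but fails to be a smooth manifold at those finitely many points, so the Morse--Bott argument does not apply directly. I would pass to the limit along regular values $x_n\to x_0$. Properness of $\Phi$ gives upper semicontinuity of the fibers: any neighborhood of $\Phi^{-1}(x_0,y)$ contains $\Phi^{-1}$ of a neighborhood of $(x_0,y)$, hence contains the nearby connected fibers $\Phi^{-1}(x_n,y)$. Were $\Phi^{-1}(x_0,y)$ disconnected, I would separate it by disjoint open sets and derive a contradiction with the connectedness of these nearby regular fibers, using that by Theorems~\ref{S0}, \ref{S1elliptic} and the normal form \ref{normalform} the local models at the rank-$0$ and rank-$1$ points involve only elliptic blocks: near every point of the singular fiber the local fibers are themselves connected and, crucially, do not branch, so each component of $\Phi^{-1}(x_0,y)$ must be approached by the connected fibers $\Phi^{-1}(x_n,y)$.

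The main obstacle is exactly this passage across the critical values of $f$: one must rule out that a component of a singular fiber becomes ``unreachable'' in the limit. This is where the absence of hyperbolic and focus-focus blocks is essential, since hyperbolic blocks are precisely what produce the branching and the embedded segments of critical values of Lemma~\ref{branchhyper} that would allow fibers to split. Equivalently, once Theorems~\ref{S0} and \ref{S1elliptic} guarantee that $(M,\omega,f,\SS)$ is a proper integrable system all of whose singularities are elliptic --- an almost-toric system with no hyperbolic part --- fiber connectedness is a consequence of the general theory of \cite{VNS07,PV09,PRV15}, of which the level-set argument above is the concrete realization in the present setting.
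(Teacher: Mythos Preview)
Your proof is correct and ends in the same place as the paper's. The paper argues in one line: Theorems~\ref{S0} and~\ref{S1elliptic} make $(M,\omega,f,\SS)$ almost-toric in the sense of \cite[Def.~3.4]{PRV15}, the $f$-fibers are connected by Corollary~\ref{connectedlevel}, and \cite[Theorem~4.7]{PRV15} then yields fiber connectedness of $\Phi$ directly. You instead unpack the mechanism on regular levels of $f$, showing that $\SS\mid_{f^{-1}(x)}$ is Morse--Bott with only index/coindex $0$ or $2$ critical circles (precisely because rank-one singularities are elliptic) and applying \cite[Prop.~4.5]{PRV15}; this is correct and is essentially the engine behind the cited theorem. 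Your limiting argument at critical values of $f$ is, as you acknowledge, only a sketch---for instance $(x_n,y)$ need not lie in $\Phi(M)$, and the ``no branching'' step requires the elliptic local normal forms to be used more carefully---but your closing appeal to the almost-toric theory of \cite{VNS07,PV09,PRV15} is exactly what the paper invokes and is what actually finishes the proof.
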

\begin{proof}
	By Theorem \ref{S0}, all singularities of rank 0 are of type elliptic-elliptic. By Theorem \ref{S1elliptic}, all singularities of rank 1 are elliptic. Thus, the integrable system is almost-toric by the definition 3.4 of \cite{PRV15}. Furthermore, by Lemma \ref{connectedlevel}, each fiber of $f$ is connected. The result then follows from Theorem 4.7 of \cite{PRV15}. 
\end{proof}

\begin{proof}[Proof of Theorem \ref{main2}]
First, one recall that, for $\lambda\neq 0$, $\SS$ is bounded below \cite{chenbl09, zhang09completeness}. Thus, the soliton equation (\ref{nablafandS}) implies that $f$ is either bounded below ($\lambda>0$) or above ($\lambda<0$). For $\lambda=0$, O. Munteanu and J. Wang observed that the soliton structure must be connected at infinity \cite{MW11MMS}. Since $f$ is proper, it must be either bounded below or above. Thus, in all cases, our hypothesis implies $f$ is bounded above or below. In addition, Corollary \ref{fMorse} states that $f$ is Morse.   
	
Next, by Theorems \ref{S0} and \ref{S1elliptic}, $(M, \omega, f, \SS)$ is a non-degenerate integrable system with only elliptic singularities. Thus, it is an almost-toric system. By Lemma \ref{imagemomentmap}, the image of the moment map is simply connected. Then Lemma \ref{connectefiberPhi} and Proposition 2.9 of \cite{VNS07} imply that the set of regular values of the momentum map is simply-connected (there is no rank $0$ focus-focus singularity). Applying \cite[Prop 2.12]{VNS07} yields the conclusion. 
\end{proof}

\def\cprime{$'$}
\bibliographystyle{plain}
\bibliography{bioMorse}
\end{document}